\newcommand\version{April 17, 2026}
\newtheorem{thm}{Theorem}[section]
\newtheorem{cor}[thm]{Corollary}
\newtheorem{lem}[thm]{Lemma}
\newtheorem{prop}[thm]{Proposition}
\newtheorem{rem}[thm]{Remark}
\newcommand{\lesi}{\lesssim}
\newcommand{\supp}{\operatorname{supp}}
\newcommand{\f}{\frac}
\newcommand{\vc}{\infty}
\newcommand{\Rd}{\mathbb{R}^d}
\newcommand{\La}{L_\lambda}
\newcommand{\me}[1]{\mathrm{e}^{#1}}
\newcommand{\one}{\mathbf{1}}
\newcommand{\ca}{\mathcal{A}}
\newcommand{\R}{\mathbb{R}}
\newcommand{\C}{\mathbb{C}}
\newcommand{\Z}{\mathbb{Z}}
\newcommand{\N}{\mathbb{N}}
\newcommand{\bs}{\mathbb{S}}
\DeclareMathOperator{\dom}{dom}
\begin{document}

\date{\version}
\title[$L^p$ norm inequalities for Hardy operators in a half-space---\version]{Equivalence of Sobolev norms in Lebesgue spaces for Hardy operators in a half-space}

\authors

\author[T. A. Bui]{The Anh Bui}
\address{The Anh Bui \\
School of Mathematical and Physical Sciences, Macquarie University, NSW 2109, Australia}
\email{the.bui@mq.edu.au, bt\_anh80@yahoo.com}

\author[K. Merz]{Konstantin Merz}
\address{Konstantin Merz\\ Institute for Theoretical Physics, ETH Zurich, Wolfgang-Pauli-Strasse 27, 8093 Zurich, Switzerland, and Institut f\"ur Analysis und Algebra, Technische Universit\"at Braunschweig, Universit\"atsplatz 2, 38106 Braun\-schweig, Germany, and Department of Mathematics, Graduate School of Science, Osaka University, Toyonaka, Osaka 560-0043, Japan}
\email{k.merz@tu-bs.de, k.merz@phys.ethz.ch}

\arraycolsep=1pt

\begin{abstract}
  We consider Hardy operators, i.e., homogeneous Schr\"odinger operators consisting of the ordinary or fractional Laplacian in a half-space plus a potential given by a function which only depends on the appropriate power of the distance to the boundary of the half-space multiplied with a coupling constant. We compare the scales of homogeneous $L^p$-Sobolev spaces generated by these Hardy operators with and without potential. To that end, we prove and use new square function estimates for operators whose heat kernels decay slowly and include singular weights. Our results hold for all admissible coupling constants in the local case and for repulsive potentials (positive coupling constants) in the fractional case, and extend those obtained recently in $L^2$. They also apply to attractive potentials (negative coupling constants) in the fractional case, provided the currently known heat kernel bounds for repulsive couplings extend to this regime.
\end{abstract}

\thanks{T.~A.~Bui is supported by the Australian Research Council via the grant ARC DP260101083. 
  K.~Merz is supported through the PRIME programme of the German Academic Exchange Service (DAAD) with funds from the German Federal Ministry of Education and Research (BMBF)}

\maketitle
\tableofcontents

\section{Introduction}

\subsection{Statement of the problem and main result}
For $d\in\N=\{1,2,...\}$ and $\alpha\in(0,2]$, we consider the following generalized Schr\"odinger operators in $L^{2}(\mathbb{R}^d_+)$,
\begin{equation}\label{defn-La}
  L_\lambda^{(\alpha)} := (-\Delta)^{\alpha/2}_{\Rd_+} + \lambda x_d^{-\alpha},
\end{equation}
where $\R_+^d:=\R^{d-1}\times\R_+$, $\R_+:=(0,\infty)$, $x=(x',x_d)\in\R_+^d$ with $x'\in\R^{d-1}$ and $x_d\in\R_+$, and $(-\Delta)^{\alpha/2}_{\Rd_+}$ denotes the regional fractional Laplacian, introduced by Bogdan, Burdzy, and Chen \cite{Bogdanetal2003}, which reduces to the Dirichlet Laplacian when $\alpha=2$. For $\alpha\in(0,2)$, it is the self-adjoint Friedrichs extension of the quadratic form
\begin{align}
  \langle u,(-\Delta)^{\frac\alpha2}_{\R_+^d}u\rangle_{L^2(\R_+^d)}
  = \frac{\ca(d,-\alpha)}{2} \iint_{\R_+^d\times\R_+^d} \frac{|u(x)-u(y)|^2}{|x-y|^{d+\alpha}}\,dx\,dy, \quad u\in C_c^1(\R_+^d),
\end{align}
with
\begin{align}
  \label{eq:defcdalpha}
  \ca(d,-\alpha) := \frac{\alpha}{2^{1-\alpha}\pi^{d/2}} \frac{\Gamma\left(\frac{d+\alpha}{2}\right)}{\Gamma\left(1-\frac\alpha2\right)},
\end{align}
and its pointwise action is given by
\begin{align*}
  (-\Delta)^{\alpha/2}_{\R_+^d}u(x)
  = \ca(d,-\alpha) \int_{\R_+^d}\frac{u(x)-u(y)}{|x-y|^{d+\alpha}}\,dy, \quad x\in\R_+^d.
\end{align*}
As stated above, the operator $L_0^{(2)}=(-\Delta)_{\R_+^d}$ means the Laplacian with a Dirichlet boundary condition on $(\R_+^d)^c$. Importantly, $(-\Delta)^{\alpha/2}_{\R_+^d}\neq \big((-\Delta)_{\R_+^d}\big)^{\alpha/2}$ when $\alpha\neq 2$.

By the decomposition $(-\Delta)_{\R_+^d}=(-\Delta)_{\R^{d-1}}+(-\Delta)_{\R_+}$ and the classical Hardy inequality \cite{Hardy1919,Hardy1920,OpicKufner1990,Kufneretal2006}, $L_\lambda^{(2)}$ is bounded from below if and only if $\lambda\ge -1/4$. Moreover, if $\lambda\ge-1/4$, then $L_\lambda^{(2)}$ is bounded from below by zero. Bogdan and Dyda \cite{BogdanDyda2011} showed the corresponding Hardy inequality for $\alpha\in(0,2)$. That is,
\begin{align}
  \label{eq:hardy}
  L_\lambda^{(\alpha)} \ge 0
\end{align}
if and only if $\lambda\ge\lambda_*(\alpha)$, with
\[
  \lambda_*(\alpha) := -\f{\Gamma(\f{1+\alpha}{2})}{\pi}\Big(\Gamma(\f{1+\alpha}{2})-\f{2^{\alpha-1}\sqrt \pi}{\Gamma(1-\f{\alpha}{2})}\Big).
\]
Since $\lambda_*(2)=-1/4$ and we fix $\alpha\in(0,2]$, we write $\lambda_*$ instead of $\lambda_*(\alpha)$ and $L_\lambda$ instead of $L_\lambda^{(\alpha)}$ from now on. Throughout the paper, we assume $\lambda\ge \lambda_*$, and we call $L_\lambda$ a Hardy operator.

\smallskip
The homogeneity of $L_\lambda$ motivates to ask to what extent $L_\lambda$ and $L_0$ are comparable with each other. Our main result in this paper, Theorem~\ref{mainresult} below, says that the scales of Sobolev norms $\|(L_\lambda)^{s/2} u\|_{L^p(\R_+^d)}$ are equivalent to the norms $\|(L_0)^{s/2} u\|_{L^p(\R_+^d)}$ for $s>0$ belonging to some interval depending on all parameters $d,\alpha,\lambda,p$.
This extends \cite{FrankMerz2023}, where Frank and the second-named author proved this equivalence of Sobolev norms for $p=2$, and contributes to a recently started and active research program concerning the characterization of the domains of power of homogeneous Schrödinger operators, whose origins lie in the influential papers of Killip, Miao, Visan, Zhang, and Zheng \cite{Killipetal2016,Killipetal2018}. The study of these function spaces is not only of theoretical interest, but also crucial in applications, e.g., mathematical physics, specifically the study of many-particle quantum systems with critically strong interactions, such as relativistically described atoms \cite{Franketal2020P,Franketal2023,Franketal2023T,MerzSiedentop2022}.
We refer to Subsection~\ref{s:commentsmainresult} for a brief survey of these and more recent results going even beyond the self-adjoint setting \cite{Buietal2026}.

\smallskip
To give a convenient presentation of our result, we introduce the following parameterization of the coupling constant $\lambda$ of the Hardy potential.
For $\alpha\in (0,2]$, let $M:=\alpha$ if $\alpha<2$ and $M:=\vc$ if $\alpha=2$, and
\begin{equation}
  \label{eq - C function}
  (-1,M) \ni \sigma \mapsto C(\sigma):=\f{1}{\pi}\left(\Gamma(\alpha)\sin\f{\pi\alpha}{2}+\Gamma(1+\sigma)\Gamma(\alpha-\sigma)\sin\f{\pi(2\sigma-\alpha)}{2}\right).
\end{equation}
Note that, if $\alpha=2$, then $C(\sigma)=\sigma^2-\sigma$ for all $\sigma>-1$ by using Euler's reflection formula and that the poles of $\Gamma(\alpha-\sigma)$ cancel with the zeros of $\sin(\pi(2\sigma-\alpha)/2)$.
As noted in \cite{FrankMerz2023} and the references therein, the function $\sigma\mapsto C(\sigma)$ is continuous and symmetric with respect to $\sigma=\f{\alpha-1}{2}$, strictly increasing on $[\f{\alpha-1}{2}, M)$ and  $C(\f{\alpha-1}{2})=\lambda_*$. Moreover, $\lim_{\sigma\to M} C(\sigma)=+\vc$ and $C(\alpha-1)=C(0)=0$. Therefore, for any $\lambda\in [\lambda_*,\vc)$ there is a unique
\begin{equation}
  \label{eq-sigma}
  \sigma \in \left[\f{\alpha-1}{2}, M\right) \ \text{with} \ \ C(\sigma) = \lambda.
\end{equation}
Hence, the case $\lambda=0$ corresponds to $\sigma =\max\{0, \alpha-1\}$ and the case $\lambda>0$ corresponds to $\sigma>(\alpha-1)_+$. For $\alpha=2$ one finds $\sigma =\f{1}{2}\big(1+\sqrt {1+4\lambda}\big)$. In the following, we always assume the relation \eqref{eq-sigma} between the coupling constant $\lambda$ and the parameter $\sigma$.

In the following, we always write $C$ and $c$ to denote positive constants that are independent of the main parameters involved but whose values may differ from line to line.
We write $A\lesssim B$ for $A,B\in\R_+$ whenever there is a constant $c>0$ such that $A\le cB$. The notation $A\simeq B$ means $A\lesssim B\lesssim A$ and in this case we say that $A$ and $B$ are comparable. If we want to emphasize that the constant $c$ may depend on some parameter, say $\tau$, we write $A\lesssim_\tau B$. The dependence on fixed parameters, like $d,\alpha,p$, is usually suppressed. Moreover, we use notations $A\wedge B := \min\{A,B\}$, $A\vee B := \max\{A,B\}$, and $A_+:=\max\{A,0\}$, $p' :=p/(p-1)$ for $p\in[1,\vc]$, and the convention $1/0\equiv\infty$.
The following is our main result.

\begin{thm}
  \label{mainresult}
  Let $p\in(1,\vc)$, $\alpha\in(0,2]$, and let $\lambda\ge0$ when $\alpha<2$ and $\lambda\ge-1/4$ when $\alpha=2$. Let $\sigma$ be defined by \eqref{eq-sigma}, and let $s\in(0,2]\cap (0,\frac{2d}\alpha)$.
  \begin{enumerate}
  \item[(a)] If 
  $1/p>\frac{\alpha s}{2}-\sigma$,
  then
    \begin{align}\label{eq:equivalencesobolev1}
      \| (L_0)^{s/2}u\|_{L^p(\R_+^d)}
      \lesssim_{d,\alpha,\lambda,s,p} \| (L_\lambda)^{s/2}u\|_{L^p(\R_+^d)}
      \quad \text{for all}\ u\in C_c^\infty(\R_+^d).
    \end{align}
	
  \item[(b)] If 
  $1/p>\frac{\alpha s}{2}-(\alpha-1)_+$,
  then
    \begin{align}
      \label{eq:equivalencesobolev2}
      \| (L_\lambda)^{s/2}u\|_{L^p(\R_+^d)}
      \lesssim_{d,\alpha,\lambda,s,p} \|(L_0)^{s/2}u\|_{L^p(\R_+^d)}
      \quad \text{for all}\ u\in C_c^\infty(\R_+^d).
    \end{align}
  \end{enumerate}
  In particular, if 
  $1/p>\max\left\{\frac{\alpha s}{2}-(\alpha-1)_+,\frac{\alpha s}{2}-\sigma\right\}$,
  then we have the equivalence
  \begin{align}
    \label{eq:equivalencesobolev3}
    \| (L_\lambda)^{s/2}u\|_{L^p(\R_+^d)} \simeq_{d,\alpha,\lambda,s,p} \|(L_0)^{s/2}u\|_{L^p(\R_+^d)}
    \quad \text{for all}\ u\in C_c^\infty(\R_+^d).
  \end{align}
\end{thm}

Theorem~\ref{mainresult}, specifically \eqref{eq:equivalencesobolev1}, and a density argument (Theorem~\ref{density}), yield the following result for the Riesz transforms associated to $L_0$ and $L_\lambda$ with $\lambda\neq0$.

\begin{cor}
  \label{maincor}
  Let $p\in(1,\infty)$, $\alpha\in(0,2]$ and let $\lambda\ge0$ when $\alpha<2$ and $\lambda\ge-1/4$ when $\alpha=2$. Let $\sigma$ be defined by \eqref{eq-sigma}, and let $s\in(0,2]\cap (0,\frac{2d}\alpha)$. 
  If 
  $1/p>\frac{\alpha s}{2}-\sigma$,
  then
  \begin{align}
    \label{eq:maincor}
    \|(L_0)^{s/2}(L_\lambda)^{-s/2}f\|_{L^p(\R_+^d)} \lesssim_{d,\alpha,\lambda,s,p} \|f\|_{L^p(\R_+^d)}.
  \end{align}
\end{cor}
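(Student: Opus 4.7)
The plan is to deduce the corollary directly from Theorem~\ref{mainresult}(1) by composing with $(L_\lambda)^{-s/2}$ on a suitable dense subspace of $L^p(\R_+^d)$. Concretely, I would introduce
\begin{align*}
  \mathcal{D} := \{(L_\lambda)^{s/2}\phi : \phi\in C_c^\infty(\R_+^d)\} \subset L^p(\R_+^d)
\end{align*}
and define the operator $T:\mathcal{D}\to L^p(\R_+^d)$ by $T\bigl((L_\lambda)^{s/2}\phi\bigr):=(L_0)^{s/2}\phi$. Well-definedness follows from the injectivity of $(L_\lambda)^{s/2}$ on $C_c^\infty(\R_+^d)$ (as $L_\lambda\ge 0$ has trivial kernel, cf.\ \eqref{eq:hardy} and the parameterization \eqref{eq - C function}). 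The hypotheses on $s$ and $p$ in the corollary are precisely those of Theorem~\ref{mainresult}(1), so for any $f=(L_\lambda)^{s/2}\phi\in\mathcal{D}$ one obtains
\begin{align*}
  \|Tf\|_{L^p(\R_+^d)} = \|(L_0)^{s/2}\phi\|_{L^p(\R_+^d)} \lesssim_{d,\alpha,\lambda,s,p} \|(L_\lambda)^{s/2}\phi\|_{L^p(\R_+^d)} = \|f\|_{L^p(\R_+^d)}.
\end{align*}

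Next I would invoke Theorem~\ref{density} to conclude that $\mathcal{D}$ is dense in $L^p(\R_+^d)$; this is exactly the role of the extra hypothesis $s<2(1/p+\sigma)/\alpha$, which ensures that $(L_\lambda)^{s/2}$ sends $C_c^\infty(\R_+^d)$ into a dense subspace (equivalently, that the Riesz-type potential $(L_\lambda)^{-s/2}$ admits a bounded realization between the appropriate $L^p$-scales, via the heat semigroup formula $(L_\lambda)^{-s/2}=\Gamma(s/2)^{-1}\int_0^\infty t^{s/2-1}e^{-tL_\lambda}\,dt$). Granted this density, $T$ extends uniquely to a bounded linear operator on all of $L^p(\R_+^d)$, and by construction this extension agrees with $(L_0)^{s/2}(L_\lambda)^{-s/2}$ on $\mathcal{D}$. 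Identifying the extension with $(L_0)^{s/2}(L_\lambda)^{-s/2}f$ for general $f\in L^p(\R_+^d)$ in the spectral-theoretic or distributional sense yields \eqref{eq:maincor}.

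\textbf{Main obstacle.} The substantive inequality is essentially free from Theorem~\ref{mainresult}(1); the real work lies in the functional-analytic bookkeeping, namely (i) verifying that $(L_\lambda)^{-s/2}$ is a well-defined operator on a large enough initial domain and that its bounded $L^p$-extension coincides with the object appearing in \eqref{eq:maincor}, and (ii) ensuring that the density statement furnished by Theorem~\ref{density} is exactly matched to the range of exponents in the hypothesis. The condition $s<2(1/p+\sigma)/\alpha$ encodes the mild singularity of $L_\lambda$ near $\{x_d=0\}$ through the parameter $\sigma$ from \eqref{eq-sigma}, and I expect this to be the critical threshold under which the composition makes sense without further smallness assumptions on $f$.
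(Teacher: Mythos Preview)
Your proposal is correct and follows essentially the same route as the paper: one restricts to the dense subspace $\mathcal{D}=(L_\lambda)^{s/2}C_c^\infty(\R_+^d)$ provided by Theorem~\ref{density} (this is where the hypothesis $s<2(1/p+\sigma)/\alpha$ enters), applies Theorem~\ref{mainresult}(1) to obtain the bound on $\mathcal{D}$, and then extends by density. The paper's proof is the same argument written in two lines.
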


In the next subsection, we comment on Theorem~\ref{mainresult} and put it into context with previous works on this subject. In the following subsection, we give its proof. 

\subsection{Comments on Theorem~\ref{mainresult}}
\label{s:commentsmainresult}

\noindent
(1) Frank and the second-named author proved \eqref{eq:equivalencesobolev1}--\eqref{eq:equivalencesobolev2} in \cite[Theorem~1]{FrankMerz2023} for $p=2$. As noted in this work, the assumptions $\lambda\ge0$ for $\alpha=1$ and $\lambda\ge-1/4$ for $\alpha=2$ in Theorem~\ref{mainresult} are best possible. For $\alpha\in(0,2)\setminus\{1\}$ we do not expect $\lambda\ge 0$ to be necessary. This assumption comes from our use of heat kernel bounds for $L_\lambda$, which are currently known only for $\lambda\ge0$ when $\alpha<2$. Since we expect these bounds to extend to $\lambda\in[\lambda_*,0)$, we accompany all results whose proofs use bounds for $\me{-tL_\lambda}$ with a remark stating the potential extension. In particular, we make the following remark.

\begin{rem}
  \label{equivalencesobolevrem}
  Let $\alpha\in(0,2)$, $\lambda\in[\lambda_*,0)$ and assume that $\me{-tL_\lambda}(x,y)$ satisfies the upper bound
  \begin{align}
    \label{eq:prematureheatkernelbound}
    \me{-tL_\lambda}(x,y) \lesssim \Big(1\wedge\f{x_d}{t^{1/\alpha}}\Big)^{\sigma}\Big(1\wedge\f{y_d}{t^{1/\alpha}}\Big)^{\sigma} t^{-d/\alpha}\Big(\f{t^{1/\alpha}+|x-y|}{t^{1/\alpha}}\Big)^{-d-\alpha},
  \end{align}
  with $\sigma$ defined by \eqref{eq-sigma}.
  (Note that this would extend the upper bound for $\me{-tL_\lambda}(x,y)$ in \eqref{eq:thm-heatkernelLa- alpha < 2}, which holds for $\lambda\geq0$, to all $\lambda\geq\lambda_*$.)
  Then \eqref{eq:equivalencesobolev1} in Theorem \ref{mainresult} and \eqref{eq:maincor} in Corollary~\ref{maincor} remain valid for this value of $\lambda$ under the additional restriction $1/p<1+\sigma\wedge0$. Moreover, \eqref{eq:equivalencesobolev2} and \eqref{eq:equivalencesobolev3} in Theorem~\ref{mainresult} remain valid under the additional assumptions $1/p<1+\sigma\wedge0$ and $1/p>\max\{\frac{\alpha s}{2}-(\alpha-1)_+,-\sigma\}$. These are additional assumptions only if $\alpha<1$.
  This follows by the same arguments as in the proof in Section~\ref{s:proofmain} below, taking into account Remarks~\ref{rem-thm-squarefunctions}, \ref{reversehardyrem}, and \ref{genhardyrem}. The resulting ranges on $p$ are nonempty in this case, which is clear if $\sigma>0$. Let us now discuss the case $\sigma<0$, which can only occur if $\alpha<1$. The assumption $\alpha s/2-\sigma<1+\sigma$ for the validity of \eqref{eq:equivalencesobolev1} is equivalent to $0>\sigma>\alpha s/4-1/2$. This interval is nonempty because $\alpha<1$ and $s\leq2$. We now consider the range of admissible $p$ in \eqref{eq:equivalencesobolev2}. If $(\alpha-1)/2\leq\sigma\leq-\alpha s/2$, which can only happen if $s\leq(1-\alpha)/2$, then $-\sigma<1+\sigma$ (i.e., $\sigma>-1/2$) is satisfied since $\sigma\geq(\alpha-1)/2$. If $-\alpha s/2\leq\sigma<0$, then the range of admissible $p$ is nonempty because $\alpha s/2<1+\sigma$.
\end{rem}

Note that in \cite{JakubowskiMaciocha2025}, Jakubowski and Maciocha prove the upper heat kernel bound \eqref{eq:prematureheatkernelbound} (which appears in Remarks~\ref{equivalencesobolevrem}, \ref{rem-thm-squarefunctions}, \ref{reversehardyrem}, and \ref{genhardyrem}) for $d=1$, all $\alpha\in(0,2)$, and all $\lambda\in[\lambda_*,0)$.

\medskip
\noindent
(2)
Theorem~\ref{mainresult} joins a line of recent research \cite{Killipetal2016,Killipetal2018,Franketal2021,Merz2021,BuiDAncona2023,BuiNader2022,FrankMerz2023,Buietal2026} on equivalence of Sobolev norms involving Hardy operators.
In the groundbreaking work \cite{Killipetal2016}, Killip, Visan, and Zhang showed the equivalence of $L^p$-Sobolev norms generated by the ordinary Laplacian and the Dirichlet--Laplacian on the complement of a convex set.
In \cite{Killipetal2018}, Killip, Miao, Visan, Zhang, and Zheng considered $(-\Delta)^{\alpha/2}+\lambda/|x|^\alpha$ and proved the equivalence of $L^p$-Sobolev norms generated by this operator with $\lambda\neq0$ and $\lambda=0$ when $\alpha=2$.
D'Ancona, Frank, Nader, Siedentop, and the authors of the present paper extended their result in \cite{Franketal2021,Merz2021,BuiDAncona2023,BuiNader2022} and successively treated all admissible $\alpha\in(0,2)$, $\lambda$ and $p$. For an alternative proof and an extension of this result when $\alpha=2$, see Miao, Su, and Zheng \cite{Miaoetal2023}.
In \cite{FrankMerz2023}, Frank and the second author showed the equivalence of $L^2(\R_+^d)$-Sobolev norms generated by $L_0$ and $L_\lambda$ with $\lambda\ge-1/4$ when $\alpha=2$ and $\lambda\ge0$ when $\alpha\in(0,2)$, i.e., Theorem~\ref{mainresult} with $p=2$.
Moreover, recently, together with Duong \cite{Buietal2026}, we proved the equivalence of Sobolev norms generated by the {\em non-self-adjoint} Kolmogorov operator with scaling critical drift, given by $(-\Delta)^{\alpha/2}-\kappa|x|^{-\alpha}x\cdot\nabla$ acting on functions on $\R^d$.
All of the above results crucially relied on heat kernel bounds for the operators in question. The restriction $\lambda>0$ in \cite{FrankMerz2023} is because heat kernel bounds for $L_\lambda$ with $\alpha\in(0,2)$ are so far only available when $\lambda\ge0$. However, as noted above, once it is shown that these bounds extend to all $\lambda\in[\lambda_*,0)$, the results in \cite{FrankMerz2023} automatically extend to $\lambda\in[\lambda_*,0]$.

\medskip
\noindent
(3)
As noted in the above-mentioned works on the equivalence of Sobolev norms, homogeneous operators appear frequently as scaling limits or model operators of more complicated problems, e.g., in quantum mechanics, general relativity, engineering, and finance. Thus, besides being of pure mathematical interest, the equivalences of Sobolev norms are powerful tools, because Hardy operators can be replaced with the easier to handle and better-understood fractional Laplacians.
For instance, Killip, Visan, and Zhang \cite{Killipetal2016Q} and Killip, Miao, Murphy, Visan, Zhang, and Zheng \cite{Killipetal2017,Killipetal2017T} used the equivalence of Sobolev norms in \cite{Killipetal2016,Killipetal2018} to study global well-posedness and scattering for nonlinear Schr\"odinger equations with the Dirichlet--Laplacian on the complement of a convex obstacle, or the Hardy operator $-\Delta+\lambda/|x|^2$. 
In the fractional case, the equivalence of Sobolev norms \cite{Franketal2021} played in important role in the proof of the strong Scott conjecture for relativistic atoms \cite{Franketal2020P} by Frank, Siedentop, Simon, and the second author; see also \cite{Franketal2023} for a shorter proof, \cite{Franketal2023T} and for a review.
This problem is a genuine many-body problem, but one can reduce it to a one-body problem involving the operator $C_\lambda:=\sqrt{1-\Delta}-1+\lambda/|x|$ and its powers $(C_\lambda)^{s/2}$ in $L^2(\R^3)$. Since $\sqrt{1-\Delta}-\sqrt{-\Delta}$ is bounded, the equivalence of Sobolev norms allowed us to deduce estimates for $(C_\lambda)^{s/2}$ from estimates for $(-\Delta)^{s/4}$, which are significantly easier to prove.
Furthermore, the techniques in \cite{Franketal2020P} and the equivalence of Sobolev norms in \cite{Franketal2021} are crucial for proving the Scott conjecture in the physically more relevant Furry model in \cite{MerzSiedentop2022}, where a single electron is described the Coulomb--Dirac operator instead of by $C_\lambda$.

In view of the applications of the equivalence of Sobolev norms in \cite{Killipetal2017,Killipetal2017T}, we are optimistic that the results in \cite{FrankMerz2023} and the present paper will be useful to study, e.g., nonlinear Schr\"odinger equations involving $L_\lambda$.

\subsection{Layout of the proofs of Theorem~\ref{mainresult} and Corollary~\ref{maincor}}
\label{s:proofmain}

The proof of Theorem~\ref{mainresult} follows the ideas laid out in \cite{Killipetal2018,Franketal2021,Merz2021,BuiDAncona2023,FrankMerz2023} and will be concluded in Subsection~\ref{s:actualproofmain} below.
Compared with the case $p=2$, a new difficulty in the proof of Theorem~\ref{mainresult} for $p\neq2$ arises from the question how to make sense of $(L_\lambda)^{s/2}f$ in $L^p(\R_+^d)$. A priori, it is not clear what the domain of $(L_\lambda)^{s/2}$, understood as an operator in $L^p$, is. A classical way to tackle this question is to express $(L_\lambda)^{s/2}$ using Littlewood--Paley square functions, i.e., to prove square function estimates such as $\|(L_\lambda)^{s/2}f\|_{L^p(\R_+^d)}\simeq\|\left(\sum_{N\in2^\Z}|N^{s/2}\Psi(L_\lambda/N)f|^2\right)^{1/2}\|_{L^p(\R_+^d)}$ for a bump function $\Psi\in C_c^\infty([1/2,3/2]:[0,1])$, say. However, as in the earlier works \cite{Killipetal2018,Merz2021,BuiDAncona2023}, we will work with square functions, where the Littlewood--Paley projection $\Psi(L_\lambda/N)$ is replaced with the well-studied heat kernel $\me{-tL_\lambda}$. See Section~\ref{s:preliminaries} below for estimates involving $\me{-tL_\lambda}$ that are most relevant for us.

\subsubsection{Continuous square function estimates}
A common approach to Littlewood--Paley square function estimates relies on a Mikhlin--H\"ormander spectral multiplier theorem, that is, the $L^p$-boundedness of sufficiently smooth functions of $L_\lambda$. This method was used in \cite{Killipetal2018,Merz2021} for operators of the form $(-\Delta)^{\alpha/2}+\lambda|x|^{-\alpha}$ when $\alpha=2$ with $\lambda\geq-(d-2)^2/4$ or $\alpha\in(0,2)$ with $\lambda\geq0$, and crucially depends on $L^p$--$L^q$ off-diagonal estimates or Poisson-type heat kernel bounds. These tools fail when $\lambda<0$ due to singular weights in the sharp heat kernel estimates. To overcome this difficulty, D'Ancona and the first author \cite{BuiDAncona2023} introduced a continuous square function approach based on square function boundedness for the underlying operator, using a theory of singular integrals beyond Calder\'on--Zygmund \cite[Chapter~6]{Auscher2007}. This yielded Sobolev norm equivalence for $\lambda\leq0$ and resolved the open case in \cite{Merz2021}. Motivated by this work, we construct analogous continuous square functions and establish the corresponding square function estimates for $L_\lambda$.
More precisely, for $\gamma>0$, we define the continuous square function associated to $L_\lambda$ as
\begin{align}
  \label{eq:defsquarefunction}
  (S_{\La,\gamma}f)(x) := \Big(\int_0^\vc|(t\La)^{\gamma}e^{-t\La}f|^2\f{dt}{t}\Big)^{1/2}.
\end{align}
In Section~\ref{s:squarefunctions}, we prove the following square function estimates.

\begin{thm}
  \label{squarefunctions}
  Let $\alpha\in (0,2]$ and let $\lambda \ge 0$ when $\alpha<2$ and $\lambda\ge -1/4$ when $\alpha=2$. Let $\gamma\in (0,1]$. Then for all $1<p<\vc$,  
  \begin{align}
    \label{eq:squarefunctions1}
    \|S_{\La,\gamma}f\|_{L^p(\R_+^d)} \simeq \|f\|_{L^p(\R_+^d)}.
  \end{align}
  In particular, for $s\in (0,2]$ and $1<p<\vc$,
  \begin{align}
    \label{eq:squarefunctions2}
    \left\|\Big(\int_0^\vc t^{-s}|t\La e^{-t\La}f|^2\f{dt}{t}\Big)^{1/2}\right\|_{L^p(\R_+^d)}
    \simeq \|(\La)^{s/2} f\|_{L^p(\R_+^d)}.
  \end{align}
\end{thm}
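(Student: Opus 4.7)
The plan is to establish \eqref{eq:squarefunctions1} first; \eqref{eq:squarefunctions2} then follows from it by functional calculus. Setting $g:=(\La)^{s/2}f$ and using the commutativity of the spectral calculus, one has the pointwise identity $t^{-s}|t\La e^{-t\La}f|^2 = |(t\La)^{1-s/2}e^{-t\La}g|^2$, so for $s\in(0,2)$ the left-hand side of \eqref{eq:squarefunctions2} equals $S_{\La,\gamma}g$ with $\gamma:=1-s/2\in(0,1)$, and \eqref{eq:squarefunctions1} applied to $g$ gives the claim. The endpoint $s=2$ would require a separate argument, presumably via approximation from $s<2$.

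The $L^2$ case of \eqref{eq:squarefunctions1} is immediate from Fubini and the spectral theorem: after the change of variables $u=t\mu$,
\begin{align*}
  \|S_{\La,\gamma}f\|_{L^2}^2
  = \int_0^\vc\!\!\int_0^\vc (t\mu)^{2\gamma}e^{-2t\mu}\,d\langle E_\mu f,f\rangle\,\tfrac{dt}{t}
  = 2^{-2\gamma}\,\Gamma(2\gamma)\,\|f\|_{L^2}^2,
\end{align*}
which yields both directions of the equivalence at $p=2$.

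For the $L^p$ upper bound $\|S_{\La,\gamma}f\|_{L^p}\lesi\|f\|_{L^p}$ in the range $1<p<\vc$, classical Calder\'on--Zygmund theory is unavailable: for $\alpha<2$ the heat kernel of $\La$ decays only polynomially in $|x-y|/t^{1/\alpha}$, and for all $\alpha\in(0,2]$ it carries extra boundary-weight factors of the form $(x_d\wedge t^{1/\alpha})^{\sigma}(y_d\wedge t^{1/\alpha})^{\sigma}$, neither of which is compatible with uniform H\"older regularity of the kernel. Instead, I would apply the framework of singular integrals beyond the Calder\'on--Zygmund theory from \cite[Chapter~6]{Auscher2007}, as adapted to Hardy operators in \cite{BuiDAncona2023}: once one verifies the appropriate $L^{p_0}\to L^{q_0}$ off-diagonal estimates for the family $\{(t\La)^\gamma e^{-t\La}\}_{t>0}$ in a suitable range of $(p_0,q_0)$, deduced from the heat-kernel bounds to be recalled in Sections \ref{s:preliminaries}--\ref{s:averagedestimates} below, the associated vertical square function is bounded on $L^p$ throughout the range $1<p<\vc$. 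For the matching lower bound, I would polarize the $L^2$ identity to write
\begin{align*}
  \langle f,g\rangle
  = c_\gamma^{-1}\int_0^\vc\!\!\int_{\R_+^d}(t\La)^\gamma e^{-t\La}f\cdot(t\La)^\gamma e^{-t\La}\overline{g}\,dx\,\tfrac{dt}{t}
\end{align*}
for $f,g\in L^2$, apply Cauchy--Schwarz in $(t,x)$ and H\"older in $x$, and invoke the already-proved upper bound at the conjugate exponent $p'$ to conclude $\|f\|_{L^p}\lesi\|S_{\La,\gamma}f\|_{L^p}$ by duality.

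The main technical obstacle is the $L^p$ upper bound. One must check that the weighted off-diagonal estimates for $(t\La)^\gamma e^{-t\La}$---with the $\sigma$-dependent boundary factors and, for $\alpha<2$, only polynomial spatial decay---hold in a range of $(p_0,q_0)$ large enough to drive the Auscher--Hofmann--Martell machinery across the full range $1<p<\vc$, and that the boundary weights do not obstruct the corresponding adapted Calder\'on--Zygmund decomposition. This step will rely directly on the heat-kernel estimates and averaged $L^p$ bounds for $\me{-t\La}$ developed in Sections \ref{s:preliminaries}--\ref{s:averagedestimates}, together with careful tracking of the dependence of all constants on the parameters $\alpha,\sigma$.
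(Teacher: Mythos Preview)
Your proposal is correct and follows essentially the same route as the paper: reduce \eqref{eq:squarefunctions2} to \eqref{eq:squarefunctions1} via the substitution $g=(\La)^{s/2}f$, handle $p=2$ by the spectral theorem, obtain the $L^p$ upper bound through Auscher's singular-integral framework beyond Calder\'on--Zygmund, and deduce the lower bound by polarization and duality. The paper's implementation is slightly more concrete than your outline: it chooses the approximation operator $\mathcal{A}_{r_B}=I-(I-e^{-r_B^\alpha\La})^m$ with $m$ large, then verifies the hypotheses of \cite[Theorem~1.1]{Auscher2007} for $p<2$ and \cite[Theorem~1.2]{Auscher2007} for $p>2$ separately, using the off-diagonal estimates of Theorem~\ref{thm-Tt} for $(t\La)^k e^{-t\La}$ (not for $(t\La)^\gamma e^{-t\La}$ directly, which is instead written as an integral of the former via the subordination formula $\La^\gamma=\Gamma(1-\gamma)^{-1}\int_0^\infty u^{1-\gamma}\La e^{-u\La}\,du/u$).
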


\begin{rem}
  \label{rem-thm-squarefunctions}
  Let $\alpha\in(0,2)$, $\lambda\in[\lambda_*,0)$, i.e., $\sigma\in[(\alpha-1)/2,(\alpha-1)_+)$, and assume that $\me{-tL_\lambda}(x,y)$ satisfies the bound in \eqref{eq:prematureheatkernelbound} with $\sigma$ defined by \eqref{eq-sigma}. Then Theorem  \ref{squarefunctions} remains valid for $\f{1}{1+\sigma\wedge0} =: (p_{-\sigma})' < p < p_{-\sigma} := -\f{1}{\sigma}$ if $\sigma<0$ and $1<p<\infty$ if $\sigma\geq0$. This follows by the same arguments as in the proof below, taking into account Remark \ref{rem-thm-ptk}.
\end{rem}

\subsubsection{Reversed and generalized Hardy inequalities}
With Theorem~\ref{squarefunctions} at hand, we prove the next ingredient in the proof of Theorem~\ref{mainresult}, which we call a reversed Hardy inequality. It gives a lower bound on the Hardy potential in terms of a square function arising in estimating the difference $(L_0)^{s/2}-(L_\lambda)^{s/2}$.

\begin{thm}[Reversed Hardy inequality]
  \label{thm-difference}
  Let $p\in(1,\vc)$, $\alpha\in (0,2]$ and let $\lambda \ge 0$ when $\alpha<2$ and $\lambda\ge -1/4$ when $\alpha=2$.
  Then, for $q=\min\{\sigma,(\alpha-1)_+\}$, $-r=\min\{0,q\}$,
  $p\in(\frac{1}{1-r},\frac1r)$
  and $s\in (0,2)$, we have
  \begin{align}
    \label{eq:thm-difference}
    \left\|\left(\int_0^\vc t^{-s}\left|\left(t\La e^{-t\La} -tL_0e^{-tL_0}\right)f\right|^2\f{dt}{t}\right)^{1/2}\right\|_{L^p(\R_+^d)}
    \lesi \left\|\f{f}{x_d^{\alpha s/2}}\right\|_{L^p(\R_+^d)}.
  \end{align}
\end{thm}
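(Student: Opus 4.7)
The plan is to use Duhamel's formula to isolate the Hardy potential from the semigroup difference, then reduce the square function bound to a vector-valued $L^p$ estimate on a weighted integral operator that can be controlled via the heat-kernel bounds of Section~\ref{s:preliminaries}.

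Writing $V(x):=x_d^{-\alpha}$ so that formally $L_\lambda=L_0+\lambda V$, Duhamel's formula gives
\begin{align*}
  e^{-tL_\lambda}-e^{-tL_0}
  = -\lambda\int_0^t e^{-(t-r)L_0}Ve^{-rL_\lambda}\,dr
  = -\lambda\int_0^t e^{-(t-r)L_\lambda}Ve^{-rL_0}\,dr.
\end{align*}
Differentiating in $t$ and multiplying by $-t$ yields
\begin{align*}
  tL_\lambda e^{-tL_\lambda}f - tL_0 e^{-tL_0}f
  = \lambda t V e^{-tL_\lambda}f - \lambda t\int_0^t L_0 e^{-(t-r)L_0}Ve^{-rL_\lambda}f\,dr,
\end{align*}
together with the mirror-image identity with $L_\lambda$ and $L_0$ interchanged. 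I would split the $r$-integral at $t/2$ and apply the two representations on the complementary halves so that, in each piece, both elapsed times are comparable to $t$; this eliminates any endpoint singularities in $r$.

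Inserting the heat-kernel bound \eqref{eq:thm-heatkernelLa- alpha < 2} for $e^{-tL_\lambda}$, whose boundary weight $(1\wedge x_d/t^{1/\alpha})^\sigma$ absorbs the $x_d^{-\alpha}$-singularity of $V$ in the near-boundary regime---together with the analogous bound for $e^{-tL_0}$ with exponent $(\alpha-1)_+$, so that the combined boundary exponent is $q=\min\{\sigma,(\alpha-1)_+\}$---produces a pointwise kernel bound
\begin{align*}
  \bigl|tL_\lambda e^{-tL_\lambda}f(x)-tL_0 e^{-tL_0}f(x)\bigr|
  \lesi \int_{\R_+^d} K_t(x,y)\,|f(y)|\,dy,
\end{align*}
where $K_t$ has the scaling of a regularized $L_0$-heat kernel, modified by boundary factors of exponent $q$ in both arguments. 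Substituting $g:=f/x_d^{\alpha s/2}$ and extracting the factor $t^{-s/2}$, the theorem reduces to showing that $g\mapsto F(x,t):=t^{-s/2}\int K_t(x,y)\,y_d^{\alpha s/2}g(y)\,dy$ maps $L^p(\R_+^d)$ to $L^p(\R_+^d;L^2(\R_+,dt/t))$.

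Since $e^{-tL_\lambda}(x,y)$ lacks the Lipschitz regularity required by classical vector-valued Calder\'on--Zygmund theory, I would prove this mapping property by following the ``beyond Calder\'on--Zygmund'' framework of \cite[Ch.~6]{Auscher2007} used in \cite{BuiDAncona2023}, complemented by Schur-type estimates on the weighted kernel that split $\R_+^d$ into the regions $\{x_d\lesi t^{1/\alpha}\}$ and $\{x_d\gtrsim t^{1/\alpha}\}$ and exploit the boundary decay of the heat kernel in each. I expect the main obstacle to be the endpoint analysis: the range $p\in(1/(1-r),1/r)$ encodes a Muckenhoupt-type $A_p$ condition on the weight $x_d^{\alpha s/2}$ whose exponents are determined by $q$, and the strict inequality $s<2$ is needed both to keep the Duhamel integrals convergent and to allow the $t$-integration of the square function to be absorbed via Theorem~\ref{squarefunctions}. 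Verifying the uniform-in-$t$ kernel estimates as $p$ approaches $1/(1-r)$ or $1/r$ requires careful tracking of the boundary-weight exponents through every piece of the Duhamel decomposition, and this is where most of the technical work lies.
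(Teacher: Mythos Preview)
Your Duhamel starting point and the $r$-integral split at $t/2$ match the paper's Proposition~\ref{prop-difference} exactly. But from there the routes diverge, and your sketch misses two structural points that the paper's argument relies on.

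First, the kernel bound. You describe $K_t$ as a regularized heat kernel ``modified by boundary factors of exponent $q$ in both arguments.'' That covers only part of the picture. The paper (Proposition~\ref{prop-difference}) shows that the kernel $Q_t$ of $tL_\lambda e^{-tL_\lambda}-tL_0 e^{-tL_0}$ is bounded by a sum $L_t^{\alpha,\beta}+M_t^{\alpha,\beta}$ of \emph{two} pieces supported on complementary regions. On $\{x_d\vee y_d\ge t^{1/\alpha},\ |x-y|<(x_d\wedge y_d)/2\}$ the boundary weights are absent and the bound is instead $M_t^{\alpha,\beta}(x,y)\sim t\,(x_d\vee y_d)^{-\alpha}\cdot t^{-d/\alpha}(\ldots)$; establishing this requires the separate computation in Lemma~\ref{lem- difference for alpha < 2} where the $z_d$-integral in the Duhamel term is carried out explicitly. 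Your uniform description with exponent $q$ everywhere would not produce a bounded operator after the $y_d^{\alpha s/2}$ weighting---the $M_t$ region has to be treated on its own.

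Second, the $L^p\to L^p(L^2(dt/t))$ step. You propose the Auscher machinery of \cite{Auscher2007}, but the paper avoids it entirely here: after the pointwise bound on $Q_t$ one decomposes $t$ dyadically, uses $\ell^1\hookrightarrow\ell^2$ to replace the square function by a single sum $\sum_j 2^{-j\alpha s/2}[L^{\alpha,\beta}_{2^{\alpha j}}+M^{\alpha,\beta}_{2^{\alpha j}}](x,y)\,y_d^{\alpha s/2}$, and then bounds the resulting \emph{scalar} kernel by elementary weighted Schur tests (Proposition~\ref{schurkillip2} for the $L$-part, a direct one-line Schur test for the $M$-part). This is where the range $p\in(1/(1-r),1/r)$ appears---as the condition for the Schur weight $(x_d/y_d)^\beta$ to work---and it does not encode an $A_p$ condition on $x_d^{\alpha s/2}$ as you suggest. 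Theorem~\ref{squarefunctions} plays no role in the proof of Theorem~\ref{thm-difference}; it enters only later, in assembling Theorem~\ref{mainresult}.
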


We prove Theorem~\ref{thm-difference} in Section~\ref{s:reversedhardy} below.

\begin{rem}
  \label{reversehardyrem}
  Let $\alpha\in(0,2)$, $\lambda\in[\lambda_*,0)$ and assume that $\me{-tL_\lambda}(x,y)$ satisfies the bound in \eqref{eq:prematureheatkernelbound} with $\sigma$ defined by \eqref{eq-sigma}. Then the assertions of Theorem \ref{thm-difference} remain valid. This follows by the same arguments as in the proof below, taking into account Remark \ref{rem-prop-difference}.
\end{rem}

The final ingredient to prove Theorem~\ref{mainresult} is the following generalized Hardy inequality giving an upper bound on powers of the Hardy potential in terms of powers of the Hardy operator $L_\lambda$.

\begin{thm}[Generalized Hardy inequality]
  \label{thm-HardyIneq}
  Let $1<p<\vc$, $\alpha\in (0,2]$, and let $\lambda \ge 0$ when $\alpha<2$ and $\lambda\ge -1/4$ when $\alpha=2$. Let $\sigma$ be defined by \eqref{eq-sigma} and suppose $s\in (0,\frac{2d}{\alpha})\cap(0,2]$ and $\tfrac{\alpha s}{2}<1+2\sigma$.
  Then,
  \begin{align}
    \label{eq:thm-HardyIneqPre}
    \|x^{-\alpha s/2}_d(L_\lambda)^{-s/2} f\|_{L^p(\R_+^d)}\lesi \|f\|_{L^p(\R_+^d)} \quad \text{for all}\ f\in L^p(\R_+^d)
  \end{align}
  holds if and only if $(\f{\alpha s}{2}-\sigma)_+<\f{1}{p}<1+\sigma\wedge0$. In that case, we have
  \begin{align}
    \label{eq:thm-HardyIneq}
    \|x^{-\alpha s/2}_d g\|_{L^p(\R_+^d)} \lesi \|\La^{s/2}g\|_{L^p(\R_+^d)} \quad \text{for all}\ g\in C_c^\infty(\R_+^d).
  \end{align}
\end{thm}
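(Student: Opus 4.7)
The plan is to establish the $L^p$-$L^p$ boundedness \eqref{eq:thm-HardyIneqPre} of the operator $T_\lambda := x_d^{-\alpha s/2}(L_\lambda)^{-s/2}$ in the sharp range of $p$, and then deduce \eqref{eq:thm-HardyIneq} from it by substitution: for $g\in C_c^\infty(\R_+^d)$, the choice $f := (L_\lambda)^{s/2}g$, which lies in $L^p(\R_+^d)$ by the semigroup/density framework surrounding Theorem~\ref{density}, gives $g=(L_\lambda)^{-s/2}f$, so that \eqref{eq:thm-HardyIneqPre} applied to this $f$ yields \eqref{eq:thm-HardyIneq}.

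For the sufficiency of the $p$-range, I would begin from the subordination identity
\[
(L_\lambda)^{-s/2} = \frac{1}{\Gamma(s/2)}\int_0^\infty t^{s/2-1}\me{-tL_\lambda}\,dt,
\]
which represents $T_\lambda$ as an integral operator with kernel
\[
K(x,y) = \frac{x_d^{-\alpha s/2}}{\Gamma(s/2)}\int_0^\infty t^{s/2-1}\,\me{-tL_\lambda}(x,y)\,dt.
\]
Inserting the sharp heat kernel bound \eqref{eq:thm-heatkernelLa- alpha < 2}, whose profile carries the factors $(1\wedge x_d/t^{1/\alpha})^\sigma$ and $(1\wedge y_d/t^{1/\alpha})^\sigma$ multiplying a free fractional (resp.\ Gaussian) heat kernel, and splitting the $t$-integral according to the relative sizes of $t^{1/\alpha}$, $x_d$, $y_d$, $|x-y|$, I would obtain an explicit pointwise bound on $K(x,y)$ that resembles a Riesz kernel of order $\alpha s$ modulated by anisotropic power weights in $x_d,y_d$. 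The assumption $\alpha s/2<1+2\sigma$ is exactly what makes the small-$t$ portion of the integral, where $t^{1/\alpha}\lesi x_d\wedge y_d$, converge.

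Given this pointwise bound on $K$, I would prove the $L^p$-boundedness of $T_\lambda$ in the stated range via a Schur-type test with power weights $x_d^a$, or equivalently by applying Minkowski's inequality in the tangential variable $x'$ to reduce to a one-dimensional weighted integral operator acting on $x_d$; the resulting 1D operator is (up to the effect of the tangential convolution) a classical Hardy--Kober-type transform with power weights, whose sharp $L^p$-boundedness range is precisely $(\alpha s/2-\sigma)_+ < 1/p < 1+\sigma\wedge 0$. For the necessity direction, I would test \eqref{eq:thm-HardyIneqPre} against families of functions probing the boundary behavior of $(L_\lambda)^{-s/2}f$: the distinguished asymptotic $(L_\lambda)^{-s/2}f(x)\sim x_d^\sigma$ as $x_d\to 0$ (for suitably chosen $f$) forces $1/p>(\alpha s/2-\sigma)_+$, and, dually, inspecting the adjoint $(L_\lambda)^{-s/2}(x_d^{-\alpha s/2}\cdot)$ against tangentially localized test functions produces $1/p<1+\sigma\wedge 0$.

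The main obstacle I expect is the sharp $L^p$ bound on the kernel $K$. Since $K$ is not a Calder\'on--Zygmund kernel --- it carries anisotropic boundary weights $x_d^{\pm|\sigma|}$ inherited from the heat kernel asymptotics --- classical singular integral theory does not directly apply. The heart of the argument therefore lies in cleanly separating the normal direction $x_d$ from the tangential variable $x'$ and reducing to a sharp one-dimensional weighted Hardy estimate; carrying out the case analysis of the $t$-integration to extract the correct weight exponents in each regime will be the main technical burden.
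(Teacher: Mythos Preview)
Your proposal is correct and follows the same overall strategy as the paper: bound the kernel $K(x,y)=x_d^{-\alpha s/2}L_\lambda^{-s/2}(x,y)$ pointwise and prove $L^p$-boundedness via weighted Schur tests, then deduce \eqref{eq:thm-HardyIneq} from \eqref{eq:thm-HardyIneqPre} by substituting $f=(L_\lambda)^{s/2}g$ (the paper isolates the needed fact $(L_\lambda)^{s/2}C_c^\infty\subset L^p$ as Lemma~\ref{domain}). The main executional differences are that the paper does not re-derive the Riesz kernel bounds from the heat kernel via subordination but quotes them directly as Lemma~\ref{lem-kernel of fractional power of L lambda} (taken from \cite{FrankMerz2023}), and then carries out the Schur test in $\R_+^d$ via a four-region split on the relative sizes of $|x-y|$, $x_d$, $y_d$, using a different power weight in each region (namely $(x_d/|x-y|)^\beta$, $(|x-y|/y_d)^\gamma$, and $(x_d/y_d)^\beta$). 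Your alternative of reducing to a one-dimensional Hardy--Kober-type operator by integrating out the tangential variable is viable, but since the kernel is not a tensor product in $(x',x_d)$, that reduction still lands on essentially the same case analysis in $(x_d,y_d)$; the paper's region-by-region $d$-dimensional Schur test is more direct. For the necessity of the $p$-range the paper simply refers to \cite[Theorem~13]{FrankMerz2023}.
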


\begin{rem}
  \label{genhardyrem}
  Let $\alpha\in(0,2)$, $\lambda\in[\lambda_*,0)$ and assume that $\me{-tL_\lambda}(x,y)$ satisfies the bound in \eqref{eq:prematureheatkernelbound} with $\sigma$ defined by \eqref{eq-sigma}. Then the assertions of Theorem \ref{thm-HardyIneq} remain valid. This follows by the same arguments as in the proof below, taking into account Remark \ref{rieszrem}.
\end{rem}

Our proof of Theorem~\ref{thm-HardyIneq} (see Section~\ref{s:generalizedhardy}) is similar to that in \cite[Theorems~3 and 13]{FrankMerz2023}, where $p=2$ was considered. 

\subsection{Proofs of Theorem~\ref{mainresult} and Corollary~\ref{maincor}}
\label{s:actualproofmain}

With the above ingredients at hand, we give the
\begin{proof}[Proof of Theorem~\ref{mainresult}]
  We only consider the proof of \eqref{eq:equivalencesobolev1}; the proof of \eqref{eq:equivalencesobolev2} is analogous.
  For $s\in(0,2)$, the square function estimates in \eqref{eq:squarefunctions2}, the triangle inequality, the reversed and generalized Hardy inequalities (Theorems~\ref{thm-difference}--\ref{thm-HardyIneq}) yield
  \begin{align}
    \begin{split}
      \| (L_0)^{\frac s2}u\|_{L^p(\R_+^d)}
      & \simeq \left\|\Big(\int_0^\vc t^{-s}|tL_0 e^{-tL_0}u|^2\f{dt}{t}\Big)^{1/2}\right\|_{L^p(\R_+^d)} \\
      & \le \left\|\Big(\int_0^\vc t^{-s}|t\La e^{-t\La}u|^2\f{dt}{t}\Big)^{1/2}\right\|_{L^p(\R_+^d)} \\
      & \quad + \left\|\Big(\int_0^\vc t^{-s}|(t L_0 e^{-tL_0} -t L_\lambda \me{-tL_\lambda})u|^2\f{dt}{t}\Big)^{\frac12} \right\|_{L^p(\R_+^d)} \\
      & \lesssim \| (L_\lambda)^{s/2}u\|_{L^p(\R_+^d)} + \left\|\f{u}{x_d^{\alpha s/2}}\right\|_{L^p(\R_+^d)}
        \lesssim \| (L_\lambda)^{s/2}u\|_{L^p(\R_+^d)}.
    \end{split}
  \end{align}
  For $s=2$, the square function estimates are not necessary; we simply use the triangle inequality and the generalized Hardy inequality to get the desired estimate.
\end{proof}

To prove Corollary~\ref{maincor}, we use the following density result, which is proved in \cite[Theorem~24]{FrankMerz2023} for $p=2$.

\begin{thm}
  \label{density}
  Let $p\in(1,\vc)$, $\alpha\in(0,2]$ and let $\lambda\ge 0$ when $\alpha<2$ and $\lambda\ge-1/4$ when $\alpha=2$. Let $\sigma$ be defined by \eqref{eq-sigma}, and let $s\in(0,2]$. Assume that $s<2(1/p+\sigma)/\alpha$. Then for any $f\in L^p(\R^d_+)$ there is a sequence $(\phi_n)\subset C^\infty_c(\R^d_+)$ such that
  $$
  L_\lambda^{s/2}\phi_n\to f
  \ \text{in}\ L^p(\R^d_+) \,.
  $$
\end{thm}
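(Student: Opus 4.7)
The plan is a duality/Hahn--Banach reduction: it suffices to prove that any $v\in L^{p'}(\R^d_+)$ with $\int_{\R^d_+} v\,L_\lambda^{s/2}\phi\,dx=0$ for all $\phi\in C_c^\infty(\R^d_+)$ must vanish. I would establish this injectivity of the transpose by combining a semigroup regularization of $v$ with the spectral injectivity of $L_\lambda^{s/2}$ on $L^2$.

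For $s\in(0,2)$, the main tool is Balakrishnan's subordination formula
\[
L_\lambda^{s/2}\phi \;=\; \frac{s/2}{\Gamma(1-s/2)}\int_0^\infty \bigl(\phi-\me{-tL_\lambda}\phi\bigr)\,t^{-1-s/2}\,dt,
\]
valid on $C_c^\infty(\R^d_+)\subset\dom_{L^p}(L_\lambda)$; the right-hand side converges absolutely in $L^p$ by $\|\phi-\me{-tL_\lambda}\phi\|_{L^p}\lesi t\,\|L_\lambda\phi\|_{L^p}$ near $t=0$ and by the $L^p$-decay of $\me{-tL_\lambda}\phi$ as $t\to\infty$ afforded by the heat kernel bounds of Sections~\ref{s:preliminaries}--\ref{s:averagedestimates}. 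Pairing with $v$, interchanging integrals by Fubini (justified by the same bounds together with the $L^{p'}$-boundedness of $\me{-tL_\lambda}$), and exploiting the $L^p$-$L^{p'}$ self-adjointness of the semigroup recasts the hypothesis as
\[
\int_0^\infty t^{-1-s/2}\bigl\langle v-\me{-tL_\lambda}v,\,\phi\bigr\rangle\,dt=0\qquad\text{for all }\phi\in C_c^\infty(\R^d_+).
\]

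The key step is the regularization: set $v_\tau:=\me{-\tau L_\lambda}v\in L^{p'}(\R^d_+)$ for $\tau>0$. The pointwise heat kernel bounds from Section~\ref{s:averagedestimates} should imply that $\me{-\tau L_\lambda}$ maps $L^{p'}$ boundedly into $L^{p'}\cap L^2$ and sends $v$ into the $L^{p'}$-domain of $L_\lambda^{s/2}$; consequently, the Balakrishnan integral applied to $v_\tau$ converges in $L^{p'}$ and equals $L_\lambda^{s/2}v_\tau$. Applying $\me{-\tau L_\lambda}$ to the preceding identity and using that it commutes with $\me{-tL_\lambda}$ yields $L_\lambda^{s/2}v_\tau=0$ as an element of $L^{p'}\cap L^2$. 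On $L^2$, the spectral theorem together with $L_\lambda\ge 0$ and the absence of a zero eigenvalue forces $v_\tau=0$ for every $\tau>0$; strong continuity of the semigroup on $L^{p'}$ then gives $v=\lim_{\tau\to 0^+}v_\tau=0$. The borderline case $s=2$ follows from the same scheme, combined with the fact that $C_c^\infty$ is a core for $L_\lambda$ on $L^p$ under the standing assumptions; alternatively, one may pass to the limit $s\uparrow 2$ in the construction and use Theorem~\ref{thm-HardyIneq} to control the tail.

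The main obstacle is the regularization step, specifically verifying that $v_\tau$ indeed lies in $L^{p'}\cap L^2$ (a subtlety when $p<2$, i.e., $p'>2$) and in the $L^{p'}$-domain of $L_\lambda^{s/2}$. Both points reduce to sharp mapping estimates, namely $\|\me{-tL_\lambda}\|_{L^{p'}\to L^2}\lesi 1$ for $t\ge 1$ (a form of ultracontractivity following from the pointwise heat kernel bounds) and $\|L_\lambda\,\me{-tL_\lambda}\|_{L^{p'}\to L^{p'}}\lesi t^{-1}$. With these mapping properties in hand, the argument is the natural $L^p$ analogue of the Hilbert-space proof of \cite[Theorem~24]{FrankMerz2023}; the assumption $s<2(1/p+\sigma)/\alpha$ enters implicitly through the admissibility conditions for $L_\lambda^{s/2}$ as a closed operator on $L^p$ and upstream through Theorem~\ref{thm-HardyIneq}.
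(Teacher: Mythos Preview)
Your duality/Hahn--Banach strategy is genuinely different from the paper's constructive argument, but it contains gaps that are not merely technical.

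\medskip
\textbf{The circularity in the transfer step.} You want to pass from ``$\langle v, L_\lambda^{s/2}\phi\rangle=0$ for all $\phi\in C_c^\infty$'' to ``$L_\lambda^{s/2}v_\tau=0$''. To do this you would need $\langle v_\tau, L_\lambda^{s/2}\psi\rangle=0$ for enough $\psi$; equivalently, $\langle v, L_\lambda^{s/2}(\me{-\tau L_\lambda}\psi)\rangle=0$. But $\me{-\tau L_\lambda}\psi\notin C_c^\infty(\R^d_+)$, so the hypothesis does not apply directly. Extending it by continuity requires approximating $\me{-\tau L_\lambda}\psi$ by $C_c^\infty$ functions in the graph norm of $L_\lambda^{s/2}$ on $L^p$, which is precisely the nontrivial content of the theorem (and is exactly Step~1 of the paper's proof). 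Your sketch ``applying $\me{-\tau L_\lambda}$ to the preceding identity'' hides this.

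\medskip
\textbf{The $L^{p'}\to L^2$ mapping fails for $p<2$.} When $p'>2$, the heat semigroup does not map $L^{p'}(\R^d_+)$ into $L^2(\R^d_+)$: the kernel bounds of Section~\ref{s:averagedestimates} give smoothing from small to large Lebesgue exponents (Lemma~\ref{lem-Lp boundedness of Tt} requires $p\le q$), not the reverse. A function $v\in L^{p'}$ with slow decay at infinity will yield $\me{-\tau L_\lambda}v\notin L^2$, so you cannot invoke the spectral theorem on $L^2$.

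\medskip
\textbf{The hypothesis $s<2(1/p+\sigma)/\alpha$ is not used.} In the paper's proof this condition is decisive and explicit: one takes $\psi\in\me{-tL_\lambda}C_c^\infty$, sets $\phi:=\chi\theta\psi$ with a radial cutoff $\chi$ at scale $R$ and a boundary cutoff $\theta$ at scale $r$, and estimates $\|L_\lambda^{s/2}(\phi-\psi)\|_{L^p}$ via commutator bounds (Corollary~\ref{cutoffcomb}) and pointwise bounds on $\psi$ (Lemma~\ref{pointwise}). Interpolating between the resulting $L^p$-bounds for $\phi-\psi$ and $L_\lambda(\phi-\psi)$ gives an error of order $r^{\sigma+1/p-\alpha s/2}+R^{-\alpha-d/p'}$, which vanishes as $r\to0$, $R\to\infty$ precisely when $s<2(1/p+\sigma)/\alpha$. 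The general case is then reduced to this via the Calder\'on-type approximation $(\me{-2tL_\lambda}-\me{-2TL_\lambda})f\to f$ from Lemma~\ref{Calderon-reproducing formula}. Your claim that the hypothesis ``enters implicitly through admissibility conditions'' is not a proof; if your argument went through without it, you would be proving a strictly stronger statement, and you should then explain why the restriction is (or is not) sharp.

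\medskip
\textbf{The case $s=2$.} Asserting that $C_c^\infty$ is a core for $L_\lambda$ on $L^p$ is again essentially the statement to be proved.
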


Theorem~\ref{density} is proved in Section~\ref{s:density}.

\begin{rem}
  \label{densityrem}
  Let $\alpha\in(0,2)$, $\lambda\in[\lambda_*,0)$ and assume that $\me{-tL_\lambda}(x,y)$ satisfies the upper bound in \eqref{eq:prematureheatkernelbound} with $\sigma$ defined by \eqref{eq-sigma}. Then Theorem \ref{density} remains valid for this value of $\lambda$. This follows by the same arguments as in the proof below, since Lemma \ref{pointwise} remains valid for this value of $\lambda$.
\end{rem}

We can now give the
\begin{proof}[Proof of Corollary~\ref{maincor}]
  It suffices to show \eqref{eq:maincor} for a dense set of $f\in L^p(\R_+^d)$. By Theorem~\ref{density}, we may choose $f=(L_\lambda)^{s/2}g$ with $g\in C_c^\infty(\R_+^d)$. Then, by Theorem~\ref{mainresult},
  \begin{align}
    \|(L_0)^{s/2}g\|_{L^p(\R_+^d)} \lesssim_{d,\alpha,\lambda,s,p} \|(L_\lambda)^{s/2}g\|_{L^p(\R_+^d)} = \|f\|_{L^p(\R_+^d)},
  \end{align}
  which concludes the proof of \eqref{eq:maincor}.
\end{proof}

\subsection*{Organization}

In Section~\ref{s:preliminaries}, we recall kernel bounds for $\me{-tL_\lambda}$ and use them, together with a Phragm\'en--Lindel\"of argument, to prove bounds for complex-time heat kernels. We use these bounds to prove new bounds for the kernels of $(tL_\lambda)^k\me{-tL_\lambda}$ and prove corresponding $L^p\to L^q$-estimates. We use these estimates in Section~\ref{s:squarefunctions} to prove the square function estimates for $(L_\lambda)^{s/2}$ in Theorem~\ref{squarefunctions}.
In Section~\ref{s:newboundsdifferenceskernels}, we extend bounds for the kernel of the difference $e^{-t\La} -e^{-tL_0}$ considered in \cite{FrankMerz2023} and prove bounds for the kernel of the difference $tL_0 e^{-tL_0}- t\La e^{-t\La}$.
These bounds are crucial to prove the reversed Hardy inequality in Section~\ref{s:reversedhardy}.
In Section~\ref{s:generalizedhardy}, we prove the generalized Hardy inequality (Theorem~\ref{thm-HardyIneq}).
Finally, in Section~\ref{s:density}, we show the density of $(L_\lambda)^{s/2}C_c^\infty(\R^d_+)$ in $L^p(\R_+^d)$ (Theorem~\ref{density}).

\section{Estimates involving the heat kernel of $L_\lambda$}
\label{s:preliminaries}

In this section, we collect known pointwise estimates for the heat kernel $\me{-tL_\lambda}$ and derive novel bounds for the complex-time heat kernel and $(tL_\lambda)^k\me{-tL_\lambda}$, $k\in\N$. In the first subsection, we discuss pointwise and in the following subsection $L^p\to L^q$-estimates.

\subsection{Pointwise estimates}
\label{ss:kernelestimates}
We recall pointwise estimates for the kernel $p_t(x,y)$ of $\me{-tL_\lambda}$.

\begin{thm}
  \label{thm-heatkernelLa- alpha < 2}
  \begin{enumerate}[\rm (a)]
    \item Let $\alpha\in (0,2)$, $\lambda\ge 0$, and let $\sigma$ be defined by \eqref{eq-sigma}.  Then for all $t>0$ and $x,y \in \Rd_+$,
      \begin{align}
        \label{eq:thm-heatkernelLa- alpha < 2}
        p_t(x,y)\  {\simeq} \  \Big(1\wedge\f{x_d}{t^{1/\alpha}}\Big)^{\sigma}\Big(1\wedge\f{y_d}{t^{1/\alpha}}\Big)^{\sigma} t^{-d/\alpha}\Big(\f{t^{1/\alpha}+|x-y|}{t^{1/\alpha}}\Big)^{-d-\alpha}.
      \end{align}
      
    \item Let $\alpha=2$, $\lambda\ge -\f{1}{4}$, and let $\sigma$ be defined by \eqref{eq-sigma}. Then for all $t>0$ and $x,y \in \Rd_+$,
  \begin{align}
    \label{eq:thm-heatkernelLa- alpha = 2}
    p_t(x,y)\  {\simeq} \  \Big(1\wedge\f{x_d}{\sqrt t}\Big)^{\sigma}\Big(1\wedge\f{y_d}{\sqrt t}\Big)^{\sigma} t^{-d/2}\exp\Big(-\f{|x-y|^2}{ct}\Big).
  \end{align}
\end{enumerate}
\end{thm}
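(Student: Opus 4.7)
The plan is to split into the local case $\alpha = 2$ and the non-local case $\alpha \in (0,2)$, since these require very different techniques. Both two-sided bounds reduce to (or perturb from) known heat kernels, so the main task is to track the sharp boundary-decay exponent $\sigma$ determined by \eqref{eq-sigma}.

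For the local case, I would exploit separation of variables. The Dirichlet Laplacian on $\R_+^d$ decomposes orthogonally as $-\lb_{\R^{d-1}} \otimes I + I \otimes L_\lambda^{(1)}$, where $L_\lambda^{(1)} := -\partial_r^2 + \lambda r^{-2}$ is the one-dimensional Bessel--Hardy operator on $(0,\vc)$ with the Friedrichs boundary condition at the origin. Hence the kernel factorizes, $p_t(x,y) = g_t(x'-y')\,q_t(x_d,y_d)$, where $g_t$ is the standard Gaussian on $\R^{d-1}$ and $q_t$ is the kernel of $\me{-tL_\lambda^{(1)}}$. The conjugation $\psi\mapsto r^{1/2}\psi$ reduces $L_\lambda^{(1)}$ to a Bessel operator of order $\nu = \sigma-1/2$, yielding the explicit formula
\begin{equation*}
q_t(r,s) = \f{\sqrt{rs}}{2t}\,\exp\!\Big(-\f{r^2+s^2}{4t}\Big)\,I_\nu\!\Big(\f{rs}{2t}\Big),
\end{equation*}
where $\sigma = \tfrac12(1+\sqrt{1+4\lambda})$ as in \eqref{eq-sigma}. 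I would then insert the well-known small/large argument asymptotics $I_\nu(z)\simeq (z/2)^\nu/\Gamma(\nu+1)$ for $z\le 1$ and $I_\nu(z)\simeq e^z/\sqrt{2\pi z}$ for $z\ge 1$, splitting according to $rs\lessgtr t$. Each regime produces exactly the factor $(1\wedge r/\sqrt{t})^\sigma(1\wedge s/\sqrt{t})^\sigma$, and combining $q_t$ with $g_t$ via $|x-y|^2=|x'-y'|^2+(x_d-y_d)^2$ yields \eqref{eq:thm-heatkernelLa- alpha = 2}.

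For the non-local case $\alpha\in(0,2)$, separation of variables fails. Here I would take as input the known sharp two-sided estimates for the regional fractional Laplacian (due to Bogdan--Grzywny--Ryznar and Chen--Kim--Song), which establish \eqref{eq:thm-heatkernelLa- alpha < 2} for $\lambda=0$ with $\sigma=\max\{0,\alpha-1\}$. The potential $\lambda x_d^{-\alpha}$ would then be incorporated through the Duhamel (Feynman--Kac) expansion
\begin{equation*}
p_t(x,y) = p_t^{(0)}(x,y) - \lambda \int_0^t\!\!\int_{\Rd_+} p_{t-s}^{(0)}(x,z)\,z_d^{-\alpha}\,p_s(z,y)\,dz\,ds,
\end{equation*}
iterated into a Neumann series. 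To obtain sharp bounds, I would use a $3P$-type inequality to control the kernel convolutions and a boundary Harnack argument to show that the iteration preserves the desired scale-invariant structure. The exponent $\sigma$ appears as the unique solution of the indicial equation $C(\sigma)=\lambda$ in \eqref{eq - C function}, obtained by computing the action of $(-\lb)^{\alpha/2}_{\R_+^d}+\lambda x_d^{-\alpha}$ on the test function $x_d^\sigma$ and identifying the eigenvalue relation.

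The main obstacle, by a wide margin, is this sharp boundary analysis for $\alpha<2$: the boundary exponent $\sigma$ is non-trivially tied to $\lambda$ via $C(\sigma)=\lambda$, and recovering it precisely in both the upper and the lower bound requires matching the Neumann iteration against the explicit indicial ansatz. This is precisely why the literature only supplies these bounds for $\lambda\ge 0$ at present; the region $\lambda\in[\lambda_*,0)$, where the potential is attractive and the indicial exponent dips below zero, needs a refinement of the perturbative method that can accommodate the singular weight $(1\wedge x_d/t^{1/\alpha})^{\sigma}$ with negative $\sigma$. The local case, by contrast, is essentially classical once one invokes the Bessel representation.
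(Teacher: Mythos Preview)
The paper does not actually prove this theorem; it is stated as a known result with references. For $\alpha=2$ the paper points to the factorization $\R_+^d=\R^{d-1}\times\R_+$ and the Bessel heat kernel (citing \cite{BorodinSalminen2002} and \cite[Appendix~B]{FrankMerz2023}), which is precisely the route you outline---your explicit formula with $I_\nu$, $\nu=\sigma-\tfrac12$, and the small/large-argument asymptotics is exactly the computation behind those references. For $\alpha\in(0,2)$ the paper simply cites \cite{ChenKumagai2003} and \cite{Chenetal2010} for $\lambda=0$ and \cite{Choetal2020,Songetal2022} for $\lambda\ge0$, without sketching a proof.

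Your proposal for $\alpha<2$ goes further than the paper by sketching a perturbative Duhamel/3P/boundary-Harnack argument. This is indeed close in spirit to what is done in \cite{Choetal2020}, so the sketch is on the right track, though a full proof along these lines is substantial (as you correctly note, matching the sharp boundary exponent $\sigma$ in both bounds is the delicate part). One minor attribution point: the $\lambda=0$ bounds for the regional fractional Laplacian that the paper invokes are due to Chen--Kumagai (for $\alpha\le1$) and Chen--Kim--Song (for $\alpha\in(1,2)$), not Bogdan--Grzywny--Ryznar. Otherwise your account is consistent with, and more detailed than, the paper's treatment.
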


For $\lambda\!=\!0$ and $\alpha\!\le\! 1$ the bounds were proved in \cite[Theorem~1.1]{ChenKumagai2003}, while \cite[Theorem~1.1]{Chenetal2010} proved the estimates for $\alpha\!\in\!(1,2)$.
The bounds for $\alpha\in(0,2)$ and $\lambda\ge0$ were recently proved in \cite[Theorem~3.2, Remark~3.3]{Choetal2020} and extended in \cite[Theorem~1.1]{Songetal2025}. Note that in \cite{Choetal2020}, the parameter $\sigma$ here is denoted by $p$ there. Moreover, $C(\sigma)$ in \eqref{eq - C function} here equals
\begin{align*}
  C(d,\alpha,p) & = \ca(d,-\alpha)\frac{|\bs^{d-2}|}{2} B\left(\frac{\alpha+1}{2},\frac{d-1}{2}\right) \gamma(\alpha,p), \quad p\in(-1,\alpha), \ d\geq2, \\
  C(1,\alpha,p) & = \ca(1,-\alpha)\gamma(\alpha,p), \quad p\in(-1,\alpha), \ d=1,
\end{align*}
with $\ca(d,-\alpha)$ as in \eqref{eq:defcdalpha},
\begin{align*}
  \gamma(\alpha,p) & = \int_0^1 \frac{(t^p-1)(1-t^{\alpha-p-1})}{(1-t)^{1+\alpha}}\,dt,
\end{align*}
and $p=\sigma$ in \cite[Remark~3.3]{Choetal2020}, as shown in \cite[Appendix~A]{FrankMerz2023}. Note that the function $C(d,\alpha,p)$ is indeed independent of $d$ since $\ca(d,-\alpha)\frac{|\bs^{d-2}|}{2} B\left(\frac{\alpha+1}{2},\frac{d-1}{2}\right)=\ca(1,-\alpha)$.

For $\alpha=2$, the bound  \eqref{eq:thm-heatkernelLa- alpha = 2} follows from the factorization of the Gaussian heat kernel with respect to $\R_+^d\!=\!\R^{d-1}\!\times\!\R_+$ and the explicit expression for the one-dimensional heat kernel, i.e., the Bessel heat kernel, which is contained, e.g., in \cite[p.~75]{BorodinSalminen2002}; we refer to \cite[Appendix~B]{FrankMerz2023} for a proof.
We also refer to \cite{BogdanMerz2025,GrzywnyTrojan2021} for sharp bounds for the $\tfrac\alpha2$-stable subordinated Bessel heat kernels.

\smallskip
Next, by a Phragm\'en--Lindel\"of argument, we extend the upper bound for $p_t(x,y)$ to complex times $z\in \mathbb{C}_{\theta}:=\{z\in \mathbb{C}: |\arg z|<\theta\}$, $\theta\in[0,\pi/2]$. These bounds are the key to estimate the kernels of $(tL_\lambda)^k\me{-tL_\lambda}$, $k\in\N$.
\begin{prop}
  \label{heatkernelestimates-halfplane}
  \begin{enumerate}[\rm (a)]
  \item Let $\alpha\in (0,2)$, $\lambda\ge0$, and let $\sigma$ be defined by \eqref{eq-sigma}. Let $p_z(x,y)$ be the kernels associated to the semigroups $e^{-zL_\lambda}$ with $z\in \mathbb{C}_{\pi/2}=\{z\in \mathbb{C}: |\arg z|<\pi/2\}$. Then for any $\epsilon\in (0,1)$, there exists a constant $C=C_\epsilon$ such that
    \begin{equation}
      \label{boundedp_t(x,y)-complex-alpha < 2}
      |p_z(x,y)|\le C  \Big(1\wedge\f{x_d}{|z|^{1/\alpha}}\Big)^{\sigma}\Big(1\wedge\f{y_d}{|z|^{1/\alpha}}\Big)^{\sigma}|z|^{-d/\alpha}\Big(\f{|z|^{1/\alpha}}{|z|^{1/\alpha}+|x-y|}\Big)^{(d+\alpha)(1-\epsilon)}
    \end{equation}
    for all $x,y \in \Rd_+$ and $z\in \mathbb{C}_{\epsilon\pi/4}$.
    
    \medskip
    
  \item Let $\alpha=2$, $\lambda\ge-1/4$, and let $\sigma$ be defined by \eqref{eq-sigma}. Let $p_z(x,y)$ be the kernels associated to the semigroups $e^{-zL_\lambda}$ with $z\in \mathbb{C}_{\pi/2}$. Then there exist constants $C, c>0$ such that
    \begin{equation}
      \label{boundedp_t(x,y)-complex-alpha = 2}
      |p_z(x,y)|\le C  \Big(1\wedge\f{x_d}{\sqrt{|z|}}\Big)^{\sigma}\Big(1\wedge\f{y_d}{\sqrt{|z|}}\Big)^{\sigma}|z|^{-d/2}\exp\Big(-\f{|x-y|^2}{c|z|}\Big)
    \end{equation}
    for all $x,y \in \Rd_+$ and $z\in \mathbb{C}_{\pi/4}$.
  \end{enumerate}
\end{prop}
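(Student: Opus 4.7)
The plan is to upgrade the real-axis bounds of Theorem \ref{thm-heatkernelLa- alpha < 2} to complex times by a Phragm\'en--Lindel\"of argument on a subsector of $\C_{\pi/2}$. Since $L_\lambda$ is a non-negative self-adjoint operator on $L^2(\R^d_+)$, the semigroup $\{e^{-zL_\lambda}\}_{z\ge 0}$ extends to an analytic semigroup on the right half-plane $\C_{\pi/2}$, and consequently $z\mapsto p_z(x,y)$ is holomorphic on $\C_{\pi/2}$ for each fixed $x,y\in\R^d_+$. The two parts (a) and (b) are handled in parallel, the only difference being the choice of the auxiliary holomorphic function that carries the off-diagonal decay.

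The first ingredient is a \emph{diagonal bound}. Using the semigroup identity $p_z(x,y)=\int p_{z/2}(x,w)\,p_{z/2}(w,y)\,dw$, the self-adjointness relation $p_{\bar z}(w,x)=\overline{p_z(x,w)}$, and the Cauchy--Schwarz inequality, one obtains
\begin{align*}
|p_z(x,y)|^2 \le \int|p_{z/2}(x,w)|^2\,dw\cdot \int|p_{z/2}(w,y)|^2\,dw = p_{\operatorname{Re}(z)}(x,x)\,p_{\operatorname{Re}(z)}(y,y).
\end{align*}
On any closed subsector of $\C_{\pi/2}$ one has $\operatorname{Re}(z)\simeq|z|$, so Theorem \ref{thm-heatkernelLa- alpha < 2} applied to the right-hand side yields precisely the on-diagonal prefactor $(1\wedge x_d/|z|^{1/\alpha})^{\sigma}(1\wedge y_d/|z|^{1/\alpha})^{\sigma}|z|^{-d/\alpha}$ claimed in \eqref{boundedp_t(x,y)-complex-alpha < 2}--\eqref{boundedp_t(x,y)-complex-alpha = 2}, but without any off-diagonal decay in $|x-y|$.

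To recover the off-diagonal decay, I would fix $x,y$ and apply Phragm\'en--Lindel\"of to an auxiliary holomorphic function. For part (b), take
\begin{align*}
F(z) := \exp\!\Big(\tfrac{|x-y|^2}{c_1 z}\Big)\,p_z(x,y)
\end{align*}
with $c_1$ slightly larger than the constant $c$ in \eqref{eq:thm-heatkernelLa- alpha = 2}. On the positive real axis Theorem \ref{thm-heatkernelLa- alpha < 2} yields $|F(t)|\lesssim(\text{on-diagonal prefactor})$ with a small exponential gain; on the rays $\arg z=\pm(\pi/2-\delta)$ one has $|\exp(|x-y|^2/(c_1z))|=\exp(|x-y|^2\sin\delta/(c_1|z|))$, which, combined with the diagonal bound of the previous step, is of admissible growth. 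Phragm\'en--Lindel\"of on the sector $\{|\arg z|<\pi/2-\delta\}$ (with $\delta<\pi/4$) then propagates the real-axis bound into the interior and gives \eqref{boundedp_t(x,y)-complex-alpha = 2} on $\C_{\pi/4}$.

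For part (a) the same strategy is used with the polynomial replacement
\begin{align*}
F(z) := \Big(1+\tfrac{c_2|x-y|^\alpha}{z}\Big)^{(d+\alpha)(1-\epsilon)/\alpha}\,p_z(x,y),
\end{align*}
defined via the principal branch (well-posed on $\C_{\pi/2}$ because $1+c_2|x-y|^\alpha/z$ has strictly positive real part there). On the positive real axis Theorem \ref{thm-heatkernelLa- alpha < 2} again controls $|F|$; on the rays $\arg z=\pm(\pi/2-\delta)$ the factor satisfies $|(1+c_2|x-y|^\alpha/z)^{(d+\alpha)(1-\epsilon)/\alpha}|\lesssim(1+|x-y|^\alpha/|z|)^{(d+\alpha)(1-\epsilon)/\alpha}$, which together with the diagonal bound gives \emph{polynomial} growth in $|x-y|$. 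The Phragm\'en--Lindel\"of principle now imposes a tradeoff between the aperture of the sector and the order of permissible polynomial growth on the boundary, and the combination $(d+\alpha)(1-\epsilon)$ on aperture $\epsilon\pi/4$ is precisely the choice that satisfies the admissibility condition, producing the loss factor $\epsilon$ in the exponent. The main obstacle is exactly this delicate matching: unlike the Gaussian case, where the exponential factor gives essentially free off-diagonal decay and allows a fixed-size sector $\C_{\pi/4}$, in the fractional case one must simultaneously shrink the sector and degrade the decay to keep the Phragm\'en--Lindel\"of growth hypothesis in force.
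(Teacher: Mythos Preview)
Your two-step plan --- weighted on-diagonal bound on a subsector, then Phragm\'en--Lindel\"of for the off-diagonal decay --- is exactly the paper's strategy. Your Cauchy--Schwarz route to the diagonal bound $|p_z(x,y)|^2\le p_{\operatorname{Re}z}(x,x)\,p_{\operatorname{Re}z}(y,y)$ is correct and in fact cleaner than what the paper does: there the same weighted bound is obtained by proving $\|e^{-z\La}\|_{L^1_{w_z^{-1}}\to L^\infty_{w_z}}\lesssim|z|^{-d/\alpha}$ via the factorization $e^{-z\La}=e^{-t\La}e^{-is\La}e^{-t\La}$ ($z=2t+is$) through $L^2$, which requires separate weighted $L^1\to L^2$ and $L^2\to L^\infty$ estimates for the real-time semigroup.

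Your off-diagonal sketch, however, misidentifies the mechanism. Phragm\'en--Lindel\"of does not trade ``aperture for permissible polynomial growth on the boundary rays''; it requires \emph{boundedness} there, and your $F$ is unbounded on the outer ray as $|z|\to0$. The $\epsilon$-loss in (a) arises instead from three-lines \emph{interpolation}. Concretely: divide $p_z$ by a holomorphic comparison kernel $Q(z)=z^{-d/\alpha}(1+z/x_d^\alpha)^{-\sigma/\alpha}(1+z/y_d^\alpha)^{-\sigma/\alpha}$ (principal branches), so that $G:=p_z/Q$ is globally bounded on $\C_{\theta_0}$ by your diagonal bound, is $O(1)$ on $\arg z=\pm\theta_0$, and is $\lesssim(t/(t+|x-y|^\alpha))^{(d+\alpha)/\alpha}$ on the real axis. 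In the strip picture $w=\log z=u+iv$, the harmonic majorant $h(u,v)=C+\tfrac{d+\alpha}{\alpha}(u-\alpha\log|x-y|)(1-v/\theta_0)$ dominates $\log|G|$ on both strip edges, and the Phragm\'en--Lindel\"of growth check at $u\to-\infty$ is merely $O(|u|)$, trivially sub-critical regardless of $d$ or $\alpha$. Reading off $h$ at $v=\epsilon\theta_0$ (with $\theta_0=\pi/4$) produces the exponent $(d+\alpha)(1-\epsilon)$ on $\C_{\epsilon\pi/4}$. For $\alpha=2$ the analogous majorant, now built from $\operatorname{Re}(1/z)$ rather than from $\log|z|$, stays Gaussian at every interior angle, whence no $\epsilon$-loss. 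This interpolation step is precisely \cite[Proposition~3.3]{DuongRobinson1996}, which the paper invokes as a black box after its on-diagonal estimate.
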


\begin{rem}
  \label{rem-heatkernelestimates-halfplane}
  Let $\alpha\in(0,2)$, $\lambda\in[\lambda_*,0)$ and assume that $\me{-tL_\lambda}(x,y)$ satisfies the bound in \eqref{eq:thm-heatkernelLa- alpha < 2} with $\sigma$ defined by \eqref{eq-sigma}. Then \eqref{boundedp_t(x,y)-complex-alpha < 2} remains valid. This follows by the same arguments as in the proof below.
\end{rem}

\begin{proof} 
  We only to give the proof for \eqref{boundedp_t(x,y)-complex-alpha < 2} since the proof of  \eqref{boundedp_t(x,y)-complex-alpha = 2} can be done similarly. To that end, we use a Phragm\'en--Lindel\"of argument as in \cite[Section~3.4]{Davies1990}. We set
  \[
    w_z(x)= \Big(1+\f{ {|z|^{1/\alpha}}}{x_d}\Big)^{\sigma}, \quad x\in  \Rd_+,\ z\in\C
  \]
  and first claim that there exists $C>0$ such that
  \begin{equation}\label{boundedp_t(x,y)-complex1'}
    \left|w_z(x) p_z(x,y)w_z(y)\right|\le \f{C}{|z|^{d/\alpha}}
  \end{equation}
  holds for all $x,y\in \Rd_+$ and $z\in\mathbb{C}_{\epsilon\pi/4}$.
  To prove this, we define, for $f: \mathbb{R}^d_+\rightarrow \mathbb{R}$, the norms
  $$
  |f|_{L_{w_{z}}^\vc(\Rd_+)}=\sup_{x\in \Rd_+} \left|f(x)w_z(x)\right|
  $$
  and
  \[
    |f|_{L_{w_{z}}^1(\Rd_+)}=\int_{\Rd_+}|f(x)|w_z(x)dx.
  \]
  Hence, the inequality  (\ref{boundedp_t(x,y)-complex1'}) is equivalent to
  $$
  \|e^{-z\La}\|_{L^1_{w_z^{-1}}(\Rd_+)\rightarrow L^\vc_{w_z}(\Rd_+)}\le \f{C}{|z|^{d/\alpha}}.
  $$
  For $z\in\mathbb{C}_{\pi/4}$ we can write $z=2t+is$ for some $t\ge 0$ and $s\in \mathbb{R}$ such that $t\simeq |z|$. Then, 
  \begin{align*}
    & \|e^{-z\La}\|_{L^1_{w_z^{-1}}(\Rd_+)\rightarrow L^\vc_{w_z}(\Rd_+)} \\
    & \quad \le \|e^{-t\La}\|_{L^2(\Rd_+)\rightarrow L^\vc_{w_z}(\Rd_+)}\|e^{-is\La}\|_{L^2(\Rd_+)\rightarrow L^2(\Rd_+)}\|e^{-t\La}\|_{L^1_{{w_z}^{-1}}(\Rd_+)\rightarrow L^2(\Rd_+)}.
  \end{align*}
  We first estimate $\|e^{-t\La}\|_{L^1_{{w_z}^{-1}}(\Rd_+)\rightarrow L^2(\Rd_+)}$. For $f\in L^1_{{w_z}^{-1}}(\Rd_+)$  we have,  by Theorem \ref{thm-heatkernelLa- alpha < 2} and $\displaystyle \Big(1\wedge\f{y_d}{t^{1/\alpha}}\Big)^{\sigma}\simeq \Big(1\wedge\f{y_d}{|z|^{1/\alpha}}\Big)^{\sigma}\simeq w_z(y)^{-1}$,
  $$
  \begin{aligned}
    \|e^{-t\La}f\|_{L^2(\Rd_+)}&\lesi \Big[\int_{\mathbb{R}^d_+}\Big|\int_{\mathbb{R}^d_+}   {t^{-\frac d\alpha}  \Big(\f{t^{1/\alpha}+|x-y|}{t^{1/\alpha}}\Big)^{-d-\alpha}w_z(y)^{-1}|f(y)|}dy\Big|^2 \left(1\wedge\frac{x_d}{t^{1/\alpha}}\right)^{2\sigma} \,dx\Big]^{\frac12}.
  \end{aligned}
  $$
  By Minkowski's inequality,
  $$
  \begin{aligned}
    \|e^{-t\La}f\|_{L^2(\Rd_+)}
    &\lesi \int_{\mathbb{R}^d}\Big[\int_{\mathbb{R}^d_+}\Big| t^{-\frac d\alpha}   \Big(\f{t^{1/\alpha}+|x-y|}{t^{1/\alpha}}\Big)^{-d-\alpha}\Big|^2 \left(1\wedge\frac{x_d}{t^{1/\alpha}}\right)^{2\sigma}\,dx\Big]^{\frac12} w_z(y)^{-1}|f(y)|dy\\
    &\lesi  {\sup_{y\in \Rd}}\Big[\int_{\mathbb{R}^d}\Big| t^{-\frac d\alpha}\Big(\f{t^{1/\alpha}+|x-y|}{t^{1/\alpha}}\Big)^{-d-\alpha}\Big|^2 \left(1\wedge\frac{x_d}{t^{1/\alpha}}\right)^{2\sigma}\,dx\Big]^{\frac12}\|f\|_{L^1_{w_z^{-1}}}.
  \end{aligned}
  $$
  This, along with the fact that
  \begin{equation}
    \label{eq1-norm}
    {\sup_{y\in \Rd}}\Big[\int_{\mathbb{R}^d}\Big| t^{-d/\alpha} \Big(\f{t^{1/\alpha}+|x-y|}{t^{1/\alpha}}\Big)^{-d-\alpha}\Big|^2 \left(1\wedge\frac{x_d}{t^{1/\alpha}}\right)^{2\sigma}\, dx\Big]^{1/2}
    \lesi t^{-\f{d}{2\alpha}},
  \end{equation}
  which can be seen by distinguishing between $x_d\lessgtr t^{1/\alpha}$ and using $\sigma>-1/2$, implies
  \[
    \|e^{-t\La}f\|_{L^2(\Rd_+)}\lesi t^{-\f{d}{2\alpha}}\|f\|_{L^1_{w_z^{-1}}(\R_+^d)}.
  \]
  It follows that 
  \[
    \|e^{-t\La}\|_{L^1_{{w_z}^{-1}}(\Rd_+)\rightarrow L^2(\Rd_+)}\lesi t^{-\f{d}{2\alpha}}.
  \]
  Similarly,
  \[
    \|e^{-t\La}\|_{L^2(\Rd_+)\to L^\vc_{{w_z}}(\Rd_+)}\lesi t^{-\f{d}{2\alpha}}.
  \]
  Moreover, since $L_\lambda$ is a (non-negative) self-adjoint operator, $\|e^{-is\La}\|_{L^2(\Rd_+)\rightarrow L^2(\Rd_+)}\le 1$. Taking these three estimates into account, we get  
  $$
  \|e^{-z\La}\|_{L^1_{w_z^{-1}}(\Rd_+)\rightarrow L^\vc_{w_z}(\Rd_+)}\le t^{-\f{d}{\alpha}} \simeq \f{1}{|z|^{d/\alpha}},
  $$
  which implies \eqref{boundedp_t(x,y)-complex1'}, i.e.,
  \[
    |p_z(x,y)|\lesi \f{1}{|z|^{d/\alpha}}\Big(1\wedge\f{x_d}{|z|^{1/\alpha}}\Big)^{\sigma}\Big(1\wedge\f{y_d}{|z|^{1/\alpha}}\Big)^{\sigma}
  \]
  for all $x,y\in \Rd_+$ and $z\in\mathbb{C}_{\epsilon\pi/4}$. By the Phragm\'en--Lindel\"of theorem in \cite[Proposition 3.3]{DuongRobinson1996} (see also \cite[Theorem~2.1]{Merz2022}), we get \eqref{boundedp_t(x,y)-complex-alpha < 2}.
\end{proof}

For each $k\in \mathbb N$, we denote by $p_{t,k}(x,y)$ the kernel of $(tL_\lambda)^ke^{-tL_\lambda}$. As a direct consequence of Proposition \ref{heatkernelestimates-halfplane} and Cauchy's formula, we obtain the following result.

\begin{prop}
  \label{thm-ptk}
  \begin{enumerate}[\rm (a)]
  \item Let $\alpha\in (0,2)$, $\lambda\ge0$, and let $\sigma$ be defined by \eqref{eq-sigma}.  Then for any $\epsilon>0$ and $k\in \mathbb N$, there exists a constant  $C=C(k,\epsilon)$ such that
    \begin{equation}
      \label{boundedp_t k(x,y)-complex-alpha < 2}
      |p_{t,k}(x,y)|\le C  \Big(1\wedge\f{x_d}{t^{1/\alpha}}\Big)^{\sigma}\Big(1\wedge\f{y_d}{t^{1/\alpha}}\Big)^{\sigma} t^{-(k+d/\alpha)}\Big(\f{t^{1/\alpha}+|x-y|}{t^{1/\alpha}}\Big)^{-d-\alpha+\epsilon}
    \end{equation}
    for all $x,y \in \Rd_+$ and $t>0$.
    
    \medskip
    
  \item Let $\alpha=2$ and let $\sigma$ be defined by \eqref{eq-sigma}.  Then for each $k\in \mathbb N$ there exist  constants  $C, c>0$ such that
    \begin{equation}\label{boundedp_t k(x,y)-complex-alpha = 2}
      |p_{t,k}(x,y)|\le C  \Big(1\wedge\f{x_d}{\sqrt t}\Big)^{\sigma}\Big(1\wedge\f{y_d}{\sqrt t}\Big)^{\sigma} t^{-(k+d/2)}\exp\Big(-\f{|x-y|^2}{ct}\Big)
    \end{equation}
    for all $x,y \in \Rd_+$ and $t>0$.
  \end{enumerate}		
\end{prop}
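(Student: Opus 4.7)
The plan is to invoke the analyticity of $z\mapsto \me{-zL_\lambda}$ in the sector $\C_{\pi/2}$ together with Cauchy's integral formula to convert $k$-th order time-derivatives of $p_t(x,y)$ into contour integrals of $p_z(x,y)$, where the complex-time bounds of Proposition~\ref{heatkernelestimates-halfplane} apply directly. At the kernel level one has $p_{t,k}(x,y)=(-t)^k\partial_t^k p_t(x,y)$, since $L_\lambda^k\me{-tL_\lambda}=(-1)^k\partial_t^k\me{-tL_\lambda}$, so it suffices to estimate $\partial_t^k p_t(x,y)$ and then multiply by $t^k$.

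For fixed $t>0$, I would choose the circular contour $\gamma_t:=\{z\in\C:|z-t|=\delta t\}$, with $\delta:=\sin(\epsilon\pi/4)$ in part (a) and $\delta:=\sin(\pi/4)$ in part (b). This ensures on the one hand that $\gamma_t$ is contained in the sector $\C_{\epsilon\pi/4}$ (resp.\ $\C_{\pi/4}$) where Proposition~\ref{heatkernelestimates-halfplane} is valid, and on the other hand that $|z|\simeq t$ uniformly for $z\in\gamma_t$. Consequently all of the $|z|$-dependent expressions in \eqref{boundedp_t(x,y)-complex-alpha < 2}--\eqref{boundedp_t(x,y)-complex-alpha = 2}, namely $(1\wedge x_d/|z|^{1/\alpha})^\sigma$, $|z|^{-d/\alpha}$, the spatial term $|z|^{1/\alpha}+|x-y|$, and the quotient $|x-y|^2/|z|$, are uniformly comparable on $\gamma_t$ to their $t$-counterparts.

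Cauchy's formula gives $\partial_t^k p_t(x,y)=\tfrac{k!}{2\pi i}\oint_{\gamma_t}p_z(x,y)(z-t)^{-k-1}\,dz$, so the standard $ML$-bound yields $|\partial_t^k p_t(x,y)|\lesi k!(\delta t)^{-k}\sup_{z\in\gamma_t}|p_z(x,y)|$. Inserting \eqref{boundedp_t(x,y)-complex-alpha < 2} with an auxiliary $\tilde\epsilon>0$ small enough that $(d+\alpha)(1-\tilde\epsilon)\ge d+\alpha-\epsilon$, and using the comparabilities just noted, produces the claim in part (a). Part (b) is handled identically, with the Gaussian factor $\exp(-|x-y|^2/(c|z|))$ in \eqref{boundedp_t(x,y)-complex-alpha = 2} in place of the polynomial spatial decay; the relation $|z|\simeq t$ on $\gamma_t$ converts exponential decay in $|z|$ into exponential decay in $t$ at the cost of enlarging the constant $c$ by a factor $1+\delta$ depending only on $\delta$.

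The only forward-looking decision is the radius $\delta t$: it must be proportional to $t$ (so that $(\delta t)^{-k}$ contributes the correct power of $t$ after Cauchy's bound, yielding a scale-homogeneous estimate) yet sufficiently small that $\gamma_t$ remains in the sector of validity of Proposition~\ref{heatkernelestimates-halfplane}. Once the contour is calibrated, the proof amounts to a mechanical insertion of the complex-time bounds, and I do not anticipate any genuine technical obstacle beyond this calibration.
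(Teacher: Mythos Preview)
Your proposal is correct and follows essentially the same route as the paper: Cauchy's formula on a circle of radius proportional to $t$ contained in the sector where Proposition~\ref{heatkernelestimates-halfplane} applies, then insertion of the complex-time bounds using $|z|\simeq t$ on the contour. The only cosmetic discrepancy is that your radius should be calibrated to the auxiliary parameter $\tilde\epsilon$ (so that $\gamma_t\subset\C_{\tilde\epsilon\pi/4}$) rather than to the target $\epsilon$, exactly as the paper does by taking $\tilde\epsilon=\epsilon/(d+\alpha)$ and then choosing $\eta$ small accordingly.
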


\begin{rem}
  \label{rem-thm-ptk}
  Let $\alpha\in(0,2)$, $\lambda\in[\lambda_*,0)$ and assume that $\me{-tL_\lambda}(x,y)$ satisfies the bound in \eqref{eq:thm-heatkernelLa- alpha < 2} with $\sigma$ defined by \eqref{eq-sigma}. Then \eqref{boundedp_t k(x,y)-complex-alpha < 2} remains valid. This follows by the same arguments as in the proof below, taking into account Remark~\ref{rem-heatkernelestimates-halfplane}.
\end{rem}

\begin{proof}
  Applying Cauchy's formula, for every $t>0$ and $k\in \mathbb N$,
  \[
    \La^ke^{-t\La} = \f{(-1)^kk!}{2\pi i}\int_{|\xi-t|=\eta t} e^{-\xi \La}\f{d\xi}{(\xi-t)^{k+1}},
  \]
  where $\eta>0$ is small enough so that $\{\xi: |\xi-t|=\eta t\}\subset \C_{\tilde\epsilon\pi/4}$ with $\tilde \epsilon = \f{\epsilon}{d+\alpha}$, and the integral does not depend on the choice of $\eta$. Thus, by Proposition \ref{heatkernelestimates-halfplane} and $|\xi|\simeq |\xi-t|\simeq t$,
  \[
    \begin{aligned}
      |p_{t,k}(x,y)|
      &\le C_{k,\tilde\epsilon}  \Big(1\wedge\f{x_d}{t^{1/\alpha}}\Big)^{\sigma}\Big(1\wedge\f{y_d}{t^{1/\alpha}}\Big)^{\sigma}t^{-(k+d/\alpha)} \Big(\f{t^{1/\alpha}+|x-y|}{t^{1/\alpha}}\Big)^{-(d+\alpha)(1-\tilde\epsilon)} \\
      &\le C_{k,\epsilon}  \Big(1\wedge\f{x_d}{t^{1/\alpha}}\Big)^{\sigma}\Big(1\wedge\f{y_d}{t^{1/\alpha}}\Big)^{\sigma}t^{-(k+d/\alpha)} \Big(\f{t^{1/\alpha}+|x-y|}{t^{1/\alpha}}\Big)^{-(d+\alpha-\epsilon)}
    \end{aligned}
  \]
  holds for all $x,y\in \Rd$ and $t>0$.
\end{proof}

\subsection{Spatially averaged estimates}
\label{s:averagedestimates}

In this section, we use the pointwise estimates from the previous section and tools from singular integral theory to prove spatially weighted estimates involving the heat kernel bounds of $L_\lambda$. We will use them to prove the square function estimates in Section~\ref{s:squarefunctions} below. To do so, we introduce further notation.
We write $\|f\|_p:=\|f\|_{L^p(\R_+^d)}$ and denote, for $\theta<1/2$,
\[
  p_\theta = \begin{cases}
               \f{1}{\theta}, \ \ & \text{if $\theta>0$}, \\
               \vc, \ \ & \text{if $\theta\le0$}.
             \end{cases}
\]
We denote the average of a measurable function $f$ over a measurable set $E\subseteq\R^N$ with $N$-dimensional Lebesgue measure $|E|\in(0,\infty)$ by
$$
\fint_E f(x)dx=\f{1}{|E|}\int_E f(x)dx.
$$
For any ball $B\subseteq\R^d$, we associate annuli $S_j(B):=2^{j}B\backslash 2^{j-1}B$, $j\in\N$, and write $S_0(B)=B$.

\begin{thm}
  \label{thm-Tt}
  Let $\{T_t\}_{t>0}$ be a family of linear integral operators on $L^2(\Rd_+)$ with their associated kernels $T_t(x,y)$. Assume that there exist $0<\beta<\alpha$  and $\theta<1/2$ such that for all $t>0$ and $x,y \in \Rd_+$,
  \begin{equation}\label{kernelTt}
    |T_t(x,y)|\le t^{-d/\alpha}\Big(\f{t^{1/\alpha}+|x-y|}{t^{1/\alpha}}\Big)^{-d-\beta}\Big(1\wedge\f{x_d}{t^{1/\alpha}}\Big)^{-\theta}\Big(1\wedge\f{y_d}{t^{1/\alpha}}\Big)^{-\theta}.
  \end{equation}
  Assume that $(p_\theta)'<p\le q< p_\theta$. Then for any ball $B$ with radius $r_B$, for every  {$t>0$} and $j\in \mathbb{N}$,  we have
  \begin{equation}\label{eq1-Tt}
    \Big(\fint_{S_j(B)}|T_tf|^q\Big)^{\frac1q}
    \le \max\Big\{\Big(\f{r_B}{t^{1/\alpha}}\Big)^{d-\frac{1}{p'}}, \Big(\f{r_B}{t^{1/\alpha}}\Big)^{d}\Big\}\Big(1+\f{t^{1/\alpha}}{2^jr_B}\Big)^{\frac1q} \Big(1+\f{2^jr_B}{t^{1/\alpha}}\Big)^{-d-\beta} \Big(\fint_B|f|^p\Big)^{\frac1p}
  \end{equation}
  for all $f\in L^p(\Rd_+)$ supported in $B$, and
  \begin{equation}\label{eq2-Tt}
    \Big(\fint_{B}|T_tf|^q\Big)^{\frac1q}
    \le \max\Big\{\Big(\f{2^jr_B}{t^{1/\alpha}}\Big)^{d-\frac{1}{p'}},\Big(\f{2^jr_B}{t^{1/\alpha}}\Big)^{d} \Big\} \Big(1+\f{t^{1/\alpha}}{r_B}\Big)^{\frac1q}\Big(1+\f{2^jr_B}{t^{1/\alpha}}\Big)^{-d-\beta} \Big(\fint_{S_j(B)}|f|^p\Big)^{\frac1p}
  \end{equation}
  for all $f\in L^p(\R_+^d)$ supported in $S_j(B)$.
\end{thm}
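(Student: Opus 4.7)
The strategy is to apply H\"older's inequality to the pointwise kernel bound \eqref{kernelTt}, then carefully evaluate the resulting weighted integrals of $a_\tau(z):=(1\wedge z_d/\tau)^{-\theta}$ over $B$ and $S_j(B)$, where $\tau:=t^{1/\alpha}$. I will prove \eqref{eq1-Tt} in detail; the estimate \eqref{eq2-Tt} then follows by an essentially identical argument in which the roles of $B$ and $S_j(B)$ are interchanged, which is admissible since the kernel bound \eqref{kernelTt} is symmetric in $x$ and $y$.

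For $f$ supported in $B$, factoring the kernel as $|T_t(x,y)|\le \tau^{-d}(1+|x-y|/\tau)^{-d-\beta}a_\tau(x)a_\tau(y)$ and applying H\"older's inequality in the $y$-variable with exponents $p,p'$ gives
\[
|T_tf(x)|\le a_\tau(x)\,\tau^{-d}\Big(\int_B(1+|x-y|/\tau)^{-(d+\beta)p'}a_\tau(y)^{p'}\,dy\Big)^{1/p'}\Big(\int_B|f|^p\,dy\Big)^{1/p}.
\]
For $x\in S_j(B)$ and $y\in B$ with $j\ge 2$, the triangle inequality yields $|x-y|\simeq 2^jr_B$, so the kernel factor can be pulled out as $(1+2^jr_B/\tau)^{-(d+\beta)}$ up to constants. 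For $j\in\{0,1\}$ this pointwise factorization fails, so instead I would integrate the kernel in $y$ directly, using that $\big(\int_B(1+|x-y|/\tau)^{-(d+\beta)p'}dy\big)^{1/p'}\lesssim\min\{r_B,\tau\}^{d/p'}$ by splitting into $|x-y|\lessgtr\tau$ and exploiting $(d+\beta)p'>d$, and then check that the resulting bound still matches the form of the right-hand side of \eqref{eq1-Tt}.

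The integrability conditions $\theta p'<1$ (equivalent to $(p_\theta)'<p$) and $\theta q<1$ (equivalent to $q<p_\theta$) ensure that $a_\tau^{p'}$ and $a_\tau^q$ are locally integrable at $\{z_d=0\}$. By splitting $B$ into the subregions $\{y_d<\tau\}$ and $\{y_d\ge\tau\}$ and computing the one-dimensional integral in the $y_d$ direction, one obtains
\[
\Big(\int_Ba_\tau(y)^{p'}\,dy\Big)^{1/p'}\lesssim r_B^{d/p'}\max\{1,(\tau/r_B)^{\theta_+}\},
\]
and an analogous bound for $\big(\int_{S_j(B)}a_\tau(x)^q\,dx\big)^{1/q}$ with $r_B$ replaced by $2^jr_B$. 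The extra $(1+\tau/(2^jr_B))^{1/q}$ factor on the right-hand side of \eqref{eq1-Tt} should emerge in the regime $\tau\gtrsim 2^jr_B$, when the weight $a_\tau$ enhances the $L^q$-average on the thin annulus $S_j(B)$ beyond what its natural volume would dictate.

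Combining these ingredients, taking the $L^q$-norm in $x$ over $S_j(B)$, and normalizing by $|S_j(B)|^{1/q}$ and $|B|^{1/p}$, yields \eqref{eq1-Tt}: the prefactor $\max\{(r_B/\tau)^{d-1/p'},(r_B/\tau)^d\}$ arises from combining $\tau^{-d}$ with the weight-integral bounds and matching the two asymptotic regimes $r_B\lessgtr\tau$, using $\theta<1/p'$ to convert the weight exponent into the exponent $d-1/p'$ in the small-$r_B$ regime. The main obstacle will be the careful bookkeeping across the cases $r_B\lessgtr\tau$, $2^jr_B\lessgtr\tau$, and $\theta\lessgtr 0$, together with the separate handling of $j\in\{0,1\}$, needed to verify that all estimates combine uniformly into the precise three-factor form displayed in \eqref{eq1-Tt} and \eqref{eq2-Tt}.
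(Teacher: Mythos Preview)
Your approach is correct and essentially the same as the paper's: apply H\"older in $y$, pull out the decay factor $(1+2^jr_B/\tau)^{-d-\beta}$ using $|x-y|\simeq 2^jr_B$ for $x\in S_j(B)$, $y\in B$, and then estimate the weighted integrals of $a_\tau$ using $\theta p'<1$ and $\theta q<1$. The paper organizes this via a four-fold split into $I_1,\dots,I_4$ according to whether $x_d\lessgtr\tau$ and $y_d\lessgtr\tau$, whereas you compute the weight integrals $\int_B a_\tau^{p'}$ and $\int_{S_j(B)} a_\tau^q$ in one stroke; this is purely cosmetic. One small remark: your bound $\bigl(\int_B a_\tau^{p'}\bigr)^{1/p'}\lesssim r_B^{d/p'}\max\{1,(\tau/r_B)^{\theta_+}\}$ is actually sharper than what the paper records (the paper bounds $B\cap\{y_d\le\tau\}$ by the cylinder $B'\times[0,\tau]$, giving exponent $1/p'$ rather than $\theta_+$), so your weakening step ``using $\theta<1/p'$ to convert the weight exponent into $d-1/p'$'' is indeed needed to land on the stated right-hand side. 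Your flag on $j\in\{0,1\}$ is also appropriate: the relation $d(B,S_j(B))\simeq 2^jr_B$ used in the paper only holds for $j\ge2$, which is all that is needed in the applications.
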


\begin{proof}
  For $\theta\le0$, the proof is immediate. The following more elaborate arguments cover this case nevertheless. In the following, we only prove \eqref{eq1-Tt}; the proof of \eqref{eq2-Tt} is analogous.
  Let $B=B(x_B, r_B)$ be a ball in $\Rd_+$ with $x_B = (x_B', x_{B}^d)$ with $x_B' \in \R^{d-1}$ and $x_B^d \in (0,\vc)$. We write $2^j B:=B(x_B,2^j r_B)$ and
  \[
    B' := \{y'\in \R^{d-1}: \|y'-x_B'\|<r_B\}.
  \]
  With the abbreviation
  \[
    D_\theta(x_d,t) := \Big(1\wedge\f{x_d}{t^{1/\alpha}}\Big)^{-\theta}
    \quad \text{for all} \ x_d,t>0, \ \theta\in\R,
  \]
  we have
  \[
    \begin{aligned}
      \|T_tf\|_{L^q(S_j(B))}
      &\le \left\{\int_{S_j(B)}\Big[\int_B  t^{-\frac d\alpha}\Big(\f{t^{1/\alpha}+|x-y|}{t^{1/\alpha}}\Big)^{-d-\alpha}D_\theta(x_d,t)D_\theta(y_d,t)|f(y)|dy\Big]^qdx\right\}^{\frac1q}\\
      &\lesi I_1 + I_2 + I_3 + I_4,
    \end{aligned}
  \]
  where
  $$
  \begin{aligned}
    I_1&= \left\{\int_{S_j(B)\cap \{x_d\le t^{1/\alpha}\}}\Big[\int_{B\cap \{y_d\le t^{1/\alpha}\}}  t^{-\frac d\alpha}\Big(\f{t^{1/\alpha}+|x-y|}{t^{1/\alpha}}\Big)^{-d-\beta}D_\theta(x_d,t)D_\theta(y_d,t)|f(y)|dy\Big]^qdx\right\}^{\frac1q},\\
    I_2&= \left\{\int_{S_j(B)\cap \{x_d\le t^{1/\alpha}\}}\Big[\int_{B\cap \{y_d> t^{1/\alpha}\}}   t^{-\frac d\alpha}\Big(\f{t^{1/\alpha}+|x-y|}{t^{1/\alpha}}\Big)^{-d-\beta}D_\theta(x_d,t)D_\theta(y_d,t)|f(y)|dy\Big]^qdx\right\}^{\frac1q},\\
    I_3&= \left\{\int_{S_j(B)\cap \{x_d> t^{1/\alpha}\}}\Big[\int_{B\cap \{y_d\le t^{1/\alpha}\}}   t^{-\frac d\alpha}\Big(\f{t^{1/\alpha}+|x-y|}{t^{1/\alpha}}\Big)^{-d-\beta}D_\theta(x_d,t)D_\theta(y_d,t)|f(y)|dy\Big]^qdx\right\}^{\frac1q},\\
    \text{and}& \\
    I_4&= \left\{\int_{S_j(B)\cap \{x_d> t^{1/\alpha}\}}\Big[\int_{B\cap \{y_d> t^{1/\alpha}\}}  t^{-\frac d\alpha}\Big(\f{t^{1/\alpha}+|x-y|}{t^{1/\alpha}}\Big)^{-d-\beta}D_\theta(x_d,t)D_\theta(y_d,t)|f(y)|dy\Big]^qdx\right\}^{\frac1q}.
  \end{aligned}
  $$
  
  We first consider $I_1$. Here, we have
  \[
    D_\theta(x_d,t) = \Big(\f{x_d}{t^{1/\alpha}}\Big)^{-\theta }, \qquad
    D_\theta(y_d,t) = \Big(\f{y_d}{t^{1/\alpha}}\Big)^{-\theta }.
  \]
  By H\"older's inequality, 
  $$
  \begin{aligned}
    I_1&\le t^{-d/\alpha} \Big(1+\f{d(B,S_j(B))}{t^{1/\alpha}}\Big)^{-d-\beta}\|f\|_p\\
       & \ \ \ \times \Big[\int_{S_j(B)\cap  \{x_d\le t^{1/\alpha}\}}\Big(\f{x_d}{t^{1/\alpha}}\Big)^{-\theta q}dx\Big]^{1/q} \Big[\int_{B\cap \{y_d\le t^{1/\alpha}\}}\Big( \f{y_d}{t^{1/\alpha}}\Big)^{-\theta p'}dy\Big]^{1/p'}.
  \end{aligned}
  $$
  Using $d(B,S_j(B))\simeq 2^j r_B$,
  \begin{equation}\label{eq1 - proof of Theorem 2.2}
    \begin{aligned}
      \Big[\int_{S_j(B)\cap \{x_d\le t^{1/\alpha}\}}\Big(\f{x_d}{t^{1/\alpha}}\Big)^{-\theta q}dx\Big]^{1/q}
      &\le \Big[  \int_{2^jB'}\int_0^{t^{1/\alpha}} \Big(\f{x_d}{t^{1/\alpha}}\Big)^{-\theta q}dx_ddx' \Big]^{1/q}\\
      &\lesi t^{\f{1}{\alpha q}}|2^jB'|^{1/q}.
    \end{aligned}
  \end{equation}
  since $\theta q<1$, and
  \begin{equation}\label{eq2 - proof of Theorem 2.2}
    \begin{aligned}
      \Big[\int_{B\cap \{y_d\le t^{1/\alpha}\}}\Big( \f{y_d}{t^{1/\alpha}}\Big)^{-\theta p'}dy\Big]^{1/p'}
      &\lesi t^{\f{1}{\alpha p'}}|B'|^{1/p'},
    \end{aligned}
  \end{equation}
  we get, for some $c>0$,
  \begin{equation}
    \label{eq-I1}
    \begin{aligned}
      |S_j(B)|^{-1/q}\times I_1
      &\lesi t^{-\f{d}{\alpha} +\f{1}{\alpha q}+\f{1}{\alpha p'}} \Big(1+\f{2^jr_B}{t^{1/\alpha}}\Big)^{-d-\beta}|2^jB|^{-1/q}|2^jB'|^{1/q}|B'|^{1/p'}\|f\|_{L^p(B)}\\
      &= c t^{-\f{d}{\alpha} +\f{1}{\alpha q}+\f{1}{\alpha p'}} (2^jr_B)^{-1/q}r_B^{d-1/p'}\Big(1+\f{2^jr_B}{t^{1/\alpha}}\Big)^{-d-\beta} \Big(\fint_B|f|^p\Big)^{1/p}\\
      &\lesi  {\Big(\f{r_B}{t^{1/\alpha}}\Big)^{ d-1/p' }\Big(1+\f{t^{1/\alpha}}{2^jr_B}\Big)^{1/q} \Big(1+\f{2^jr_B}{t^{1/\alpha}}\Big)^{-d-\beta} \Big(\fint_B|f|^p\Big)^{1/p}}.
    \end{aligned}
  \end{equation}
  This concludes the estimate for $I_1$. We now bound $I_2$. Here, we have
  \[
    D_\theta(x_d,t) = \Big(\f{x_d}{t^{1/\alpha}}\Big)^{-\theta }, \quad D_\theta(y_d,t)= 1.
  \]
  Hence, using H\"older's inequality and \eqref{eq1 - proof of Theorem 2.2}, we have
  $$
  \begin{aligned}
    I_2&\lesi \left\{\int_{S_j(B)\cap \{x_d\le t^{1/\alpha}\}}\Big[\int_{B\cap \{y_d> t^{1/\alpha}\}} \Big( \f{x_d}{t^{1/\alpha}}\Big)^{-\theta} t^{-d/\alpha}\Big(\f{t^{1/\alpha}+|x-y|}{t^{1/\alpha}}\Big)^{-d-\beta}|f(y)|dy\Big]^qdx\right\}^{1/q}\\
       &\lesi   {t^{-\f{d}{\alpha}}}\Big(1+\f{d(B,S_j(B))}{t^{1/\alpha}}\Big)^{-d-\beta}\|f\|_{L^1(B)}\Big(\int_{S_j(B)\cap \{x_d\le t^{1/\alpha}\}}  \Big( \f{x_d}{t^{1/\alpha}}\Big)^{-\theta q}dx\Big)^{1/q}\\
       &\lesi  t^{-\f{d}{\alpha}+\f{1}{\alpha q}}\Big(1+\f{2^jr_B}{t^{1/\alpha}}\Big)^{-d-\beta}|2^jB'|^{1/q}|B|\Big(\fint_B|f|^p\Big)^{1/p}.
  \end{aligned}
  $$
  It follows that 
  \begin{equation}
    \label{eq-I2}
    \begin{aligned}
      |S_j(B)|^{-1/q}\times I_2
      &\lesi t^{-\f{d}{\alpha}+\f{1}{\alpha q}}\Big(1+\f{2^jr_B}{t^{1/\alpha}}\Big)^{-d-\beta}|2^jB|^{-1/q}|2^jB'|^{1/q}|B| \Big(\fint_B|f|^p\Big)^{1/p}\\
      &\lesi   {\Big(\f{r_B}{t^{1/\alpha}}\Big)^d\Big(1+\f{t^{1/\alpha}}{2^jr_B}\Big)^{1/q}\Big(1+\f{2^jr_B}{t^{1/\alpha}}\Big)^{-d-\beta} \Big(\fint_B|f|^p\Big)^{1/p}}.
    \end{aligned}
  \end{equation}
  
  Considering the term $I_3$, we have
  \[
    D_\theta(x_d,t) = 1, \ \ \ D_\theta(y_d,t)= \Big(\f{y_d}{t^{1/\alpha}}\Big)^{-\theta }.
  \]
  Hence, 
  $$
  \begin{aligned}
    I_3&\lesi \left\{\int_{S_j(B)\cap \{x_d> t^{1/\alpha}\}}\Big[\int_{B\cap \{y_d\le t^{1/\alpha}\}}  t^{-\frac d\alpha}\Big(\f{t^{1/\alpha}+|x-y|}{t^{1/\alpha}}\Big)^{-d-\beta}\Big( \f{y_d}{t^{1/\alpha}}\Big)^{-\theta }|f(y)|dy\Big]^qdx\right\}^{\frac1q}\\
       &\lesi  t^{-\f{d}{\alpha}}\Big(1+\f{d(B,S_j(B))}{t^{1/\alpha}}\Big)^{-d-\beta}|2^jB|^{1/q} \int_{B\cap \{y_d\le t^{1/\alpha}\}}\Big( \f{y_d}{t^{1/\alpha}}\Big)^{-\theta }|f(y)|dy.
  \end{aligned}
  $$
  By H\"older's inequality and \eqref{eq2 - proof of Theorem 2.2},
  $$
  \begin{aligned}
    \int_{B\cap \{y_d\le t^{1/\alpha}\}}\Big( \f{y_d}{t^{1/\alpha}}\Big)^{-\theta }|f(y)|dy&\lesi  \Big(\int_{B\cap \{y_d\le t^{1/\alpha}\}}\Big( \f{y_d}{t^{1/\alpha}}\Big)^{-\theta p' }dy\Big)^{1/p'}\|f\|_{L^p(B)}\\
                                                                                           &\lesi t^{\f{1}{p'\alpha}} |B'|^{1/p'}\|f\|_{L^p(B)}.
  \end{aligned}
  $$
  Consequently,
  \begin{equation}
    \label{eq-I3}
    \begin{aligned}
      |S_j(B)|^{-1/q}\times I_3
      &\lesi \ t^{-\f{d}{\alpha}+\f{1}{\alpha p'}}|B'|^{1/p'}|B|^{1/p} \Big(1+\f{2^jr_B}{t^{1/\alpha}}\Big)^{-d-\beta} \Big(\fint_B|f|^p\Big)^{1/p}\\
      & {\simeq}  \  \Big(\f{r_B}{t^{1/\alpha}}\Big)^{d-1/p'} \Big(1+\f{2^jr_B}{t^{1/\alpha}}\Big)^{-d-\beta} \Big(\fint_B|f|^p\Big)^{1/p}.
    \end{aligned}
  \end{equation}
  
  It remains to estimate the term $I_4$. Since in this case 
  \[
    D_\theta(x_d,t)= D_\theta(y_d,t)= 1,
  \]
  we have,
  \[
    \begin{aligned}
      I_4&\lesi \left\{\int_{S_j(B)}\Big[\int_{B}  t^{-d/\alpha}\Big(\f{t^{1/\alpha}+|x-y|}{t^{1/\alpha}}\Big)^{-d-\alpha}|f(y)|dy\Big]^qdx\right\}^{1/q}\\
         &\lesi t^{-d/\alpha} \Big(1+\f{d(B,S_j(B))}{t^{1/\alpha}}\Big)^{-d-\beta}|S_j(B)|^{1/q}\|f\|_{L^1(B)}.
    \end{aligned}
  \]
  Thus, by H\"older's inequality, we obtain
  \begin{equation}
    \label{eq-I4}
    \begin{aligned}
      |S_j(B)|^{-1/q}\times I_4
      &\lesi t^{-\f{d}{\alpha}}|B| \Big(1+\f{2^jr_B}{t^{1/\alpha}}\Big)^{-d-\beta} \Big(\fint_B|f|^p\Big)^{1/p}\\
      &\lesi   \Big(\f{r_B}{t^{1/\alpha}}\Big)^{d} \Big(1+\f{2^jr_B}{t^{1/\alpha}}\Big)^{-d-\beta} \Big(\fint_B|f|^p\Big)^{1/p}.
    \end{aligned}
  \end{equation}
  This completes the proof of \eqref{eq1-Tt}.
\end{proof}

In the following, we denote the operator norm of a bounded linear operator $T:L^p(\R_+^d)\to L^q(\R_+^d)$ by $\|T\|_{p\to q}$.

\begin{lem}\label{lem-Lp boundedness of Tt}
  Let $\theta<1/2$ and $\{T_t\}_{t>0}$ be as in Theorem \ref{thm-Tt}. Then for $(p_\theta)'<p\le q<p_\theta$ we have
  \[
    \|T_t\|_{p\to q} \lesi t^{-\f{d}{\alpha}(\f{1}{p}-\f{1}{q})}
  \]
  for all $t>0$.
\end{lem}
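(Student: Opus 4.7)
The plan is to deduce the $L^p\to L^q$ bound from the averaged estimate \eqref{eq2-Tt} in Theorem~\ref{thm-Tt} by a standard ``local plus dyadic annular decay'' covering argument. Throughout, I keep $t>0$ fixed and let $r:=t^{1/\alpha}$ play the role of the natural length scale.

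\textbf{Step 1: set up a covering.} I choose a collection $\{B_k\}_{k\in\N}$ of balls in $\R_+^d$ of radius $r$ whose interiors are pairwise disjoint and which covers $\R_+^d$ (e.g.\ associated to an $r$-lattice, intersected with $\R_+^d$). Two properties matter: (i) $|B_k|\simeq r^d=t^{d/\alpha}$, and (ii) for each fixed $j\in\N$, the dilates $\{2^jB_k\}_k$ have overlap bounded by $C\,2^{jd}$, so that $\sum_k \one_{S_j(B_k)} \le C\,2^{jd}\one_{\R_+^d}$.

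\textbf{Step 2: apply \eqref{eq2-Tt} on each annulus.} With $r_{B_k}=r=t^{1/\alpha}$ we have $t^{1/\alpha}/r_{B_k}=1$ and $2^jr_{B_k}/t^{1/\alpha}=2^j$, so the prefactor in \eqref{eq2-Tt} simplifies: $\max\{2^{j(d-1/p')},2^{jd}\}\cdot(1+2^j)^{-d-\beta}\cdot(1+1)^{1/q}\lesssim 2^{-j\beta}$ uniformly in $j\ge 0$. Applied to $f\one_{S_j(B_k)}$, \eqref{eq2-Tt} therefore yields
\[
  \Big(\fint_{B_k}|T_t(f\one_{S_j(B_k)})|^q\Big)^{1/q}
  \lesi 2^{-j\beta}\Big(\fint_{S_j(B_k)}|f|^p\Big)^{1/p}.
\]
Multiplying by $|B_k|^{1/q}\simeq r^{d/q}$ and dividing by $|S_j(B_k)|^{1/p}\simeq (2^jr)^{d/p}$ converts this to
\[
  \|T_t(f\one_{S_j(B_k)})\|_{L^q(B_k)}
  \lesi t^{-\f{d}{\alpha}(\f1p-\f1q)}\,2^{-j(\beta+d/p)}\,\|f\|_{L^p(S_j(B_k))}.
\]

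\textbf{Step 3: triangle inequality in $j$, then sum over $k$.} Writing $T_tf\mid_{B_k}=\sum_{j\ge 0}T_t(f\one_{S_j(B_k)})\mid_{B_k}$ and using the triangle inequality in $L^q(B_k)$ gives a bound for $\|T_tf\|_{L^q(B_k)}$. Since the $B_k$ partition $\R_+^d$,
\[
  \|T_tf\|_q \;\le\; \Big(\sum_k\|T_tf\|_{L^q(B_k)}^q\Big)^{1/q}
  \;\le\; \sum_{j\ge 0}\Big(\sum_k\|T_t(f\one_{S_j(B_k)})\|_{L^q(B_k)}^q\Big)^{1/q}
\]
by Minkowski's inequality in $\ell^q$. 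Inserting the estimate of Step~2 and using the embedding $\ell^p\hookrightarrow\ell^q$ (valid because $p\le q$), I get
\[
  \|T_tf\|_q
  \lesi t^{-\f{d}{\alpha}(\f1p-\f1q)}\sum_{j\ge 0}2^{-j(\beta+d/p)}
  \Big(\sum_k \|f\|_{L^p(S_j(B_k))}^p\Big)^{1/p}.
\]
By the bounded-overlap property (ii) of Step~1, $\sum_k\|f\|_{L^p(S_j(B_k))}^p\lesi 2^{jd}\|f\|_p^p$, so the $j$-sum is bounded by $\sum_{j\ge 0}2^{-j\beta}\|f\|_p$, which converges since $\beta>0$. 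This gives the claimed estimate.

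\textbf{Main obstacle.} The argument is elementary once one has Theorem~\ref{thm-Tt}, and all the potentially dangerous terms (the singular boundary weights) have already been absorbed into its statement via the condition $(p_\theta)'<p\le q<p_\theta$. The two places where care is needed are (a) the simplification $\max\{2^{j(d-1/p')},2^{jd}\}(1+2^j)^{-d-\beta}\lesi 2^{-j\beta}$, which relies solely on $\beta>0$, and (b) the use of $\ell^p\hookrightarrow\ell^q$ to exchange the order of the $j$- and $k$-summations; this is precisely where the hypothesis $p\le q$ enters. Both points are standard, so I do not anticipate a serious technical obstruction.
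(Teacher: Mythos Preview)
Your argument is correct in spirit and takes a genuinely different route from the paper. The paper proves the lemma \emph{directly} from the kernel bound \eqref{kernelTt}: it splits the $(x,y)$-integration into the four regions determined by $x_d\lessgtr t^{1/\alpha}$ and $y_d\lessgtr t^{1/\alpha}$, and on each region combines H\"older/Minkowski with the scalar convolution estimate \eqref{eq:lem1-Tt}. Your approach instead treats Theorem~\ref{thm-Tt} as a black box and recovers the global bound via a covering by balls at scale $t^{1/\alpha}$ plus dyadic annular decay. The payoff of your route is modularity (once the averaged estimates are in hand, the global $L^p\to L^q$ bound is automatic), while the paper's direct computation avoids the covering machinery and the overlap bookkeeping.

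Two small technical points to tidy up. First, round balls with pairwise disjoint interiors cannot cover $\R_+^d$; use axis-parallel cubes of sidelength $r$ (for which all the estimates of Theorem~\ref{thm-Tt} hold with obvious modifications, since only $|x-y|$, $x_d$, $y_d$, and the diameter enter), or else work with a covering of bounded overlap and absorb the overlap constant. Second, Theorem~\ref{thm-Tt} is stated for $j\in\N=\{1,2,\dots\}$, so your use of \eqref{eq2-Tt} at $j=0$ is not literally covered. With your choice $r_B=t^{1/\alpha}$ this is harmless: the factor $(1+2^jr_B/t^{1/\alpha})^{-d-\beta}$ at $j=0$ is just $2^{-(d+\beta)}$, and the proof of Theorem~\ref{thm-Tt} goes through verbatim for $S_0(B)=B$ (the only place the proof uses $d(B,S_j(B))\simeq 2^jr_B$ costs at most a fixed constant here). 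Either remark this, or handle the local piece $f\one_{2B_k}$ by a one-line direct estimate from \eqref{kernelTt}.
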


\begin{proof}
  For $\theta\le0$, we use
  \begin{align*}
    |T_t(x,y)| \le C t^{-\frac d\alpha}\Big(\f{t^{1/\alpha}+|x-y|}{t^{1/\alpha}}\Big)^{-d-\beta}
  \end{align*}
  and that, for $1\le p\le q\le \infty$, 
  \begin{align}
    \label{eq:lem1-Tt}
    \begin{split}
      \left\|\int_{\R^N} t^{-\frac N\alpha}\Big(\f{t^{1/\alpha}+|x-y|}{t^{1/\alpha}}\Big)^{-N-\beta} f(y)\,dy\right\|_{L^q(\R^N,dx)}
      \lesi t^{-\frac{N}{\alpha}(\frac1p-\frac1q)}\|f\|_{L^p(\R^N)}
    \end{split}
  \end{align}
  holds for all $N\in\N$ and $t>0$. This concludes the case $\theta \le 0$.
  
  Thus, from now on, we consider $\theta \in (0,1/2)$. We use the same notations as in the proof of Theorem \ref{thm-Tt}. Similarly as in that proof, we write
  \[
    \begin{aligned}
      \|T_tf\|_{q}
      &\le \left\{\int_{\Rd_+}\Big[\int_{\Rd_+}  t^{-d/\alpha}\Big(\f{t^{1/\alpha}+|x-y|}{t^{1/\alpha}}\Big)^{-d-\alpha}D_\theta(x_d,t)D_\theta(y_d,t)|f(y)|dy\Big]^qdx\right\}^{1/q}\\
      &\lesi I_1 + I_2 + I_3 + I_4,
    \end{aligned}
  \]
  where
  $$
  \begin{aligned}
    I_1&= \left\{\int_{\Rd_+\cap \{x_d\le t^{1/\alpha}\}}\Big[\int_{\Rd_+\cap \{y_d\le t^{1/\alpha}\}}  t^{-\frac d\alpha}\Big(\f{t^{1/\alpha}+|x-y|}{t^{1/\alpha}}\Big)^{-d-\beta}D_\theta(x_d,t)D_\theta(y_d,t)|f(y)|dy\Big]^qdx\right\}^{\frac1q},\\
    I_2&= \left\{\int_{\Rd_+\cap \{x_d\le t^{1/\alpha}\}}\Big[\int_{\Rd_+\cap \{y_d> t^{1/\alpha}\}}   t^{-\frac d\alpha}\Big(\f{t^{1/\alpha}+|x-y|}{t^{1/\alpha}}\Big)^{-d-\beta}D_\theta(x_d,t)D_\theta(y_d,t)|f(y)|dy\Big]^qdx\right\}^{\frac1q},\\
    I_3&= \left\{\int_{\Rd_+\cap \{x_d> t^{1/\alpha}\}}\Big[\int_{\Rd_+\cap \{y_d\le t^{1/\alpha}\}}   t^{-\frac d\alpha}\Big(\f{t^{\frac 1\alpha}+|x-y|}{t^{1/\alpha}}\Big)^{-d-\beta}D_\theta(x_d,t)D_\theta(y_d,t)|f(y)|dy\Big]^qdx\right\}^{\frac1q},\\
    \text{and}& \\
    I_4&= \left\{\int_{\Rd_+\cap \{x_d> t^{1/\alpha}\}}\Big[\int_{\Rd_+\cap \{y_d> t^{1/\alpha}\}}  t^{-\frac d\alpha}\Big(\f{t^{1/\alpha}+|x-y|}{t^{1/\alpha}}\Big)^{-d-\beta}D_\theta(x_d,t)D_\theta(y_d,t)|f(y)|dy\Big]^qdx\right\}^{\frac1q}.
  \end{aligned}
  $$
  We only estimate $I_1$ and $I_2$ since $I_3$ and $I_4$ can be treated similarly as the corresponding terms in the proof of Theorem \ref{thm-Tt} with minor modifications. To estimate $I_1$, we note
  \[
    D_\theta(x_d,t) = \Big(\f{x_d}{t^{1/\alpha}}\Big)^{-\theta }, \quad D_\theta(y_d,t)= \Big(\f{y_d}{t^{1/\alpha}}\Big)^{-\theta }.
  \]
  Then, applying \eqref{eq:lem1-Tt} with $N=d-1$ and Minkowski's inequality,
  \[
    \begin{aligned}
      I_1&\lesi t^{-\f{d-1}{\alpha}(\f{1}{p}-\f{1}{q})}\left\{\int_0^{ t^{1/\alpha}}\Big[\int_0^{t^{1/\alpha}}  t^{-1/\alpha}\Big(\f{x_d}{t^{1/\alpha}}\Big)^{-\theta}\Big(\f{y_d}{t^{1/\alpha}}\Big)^{-\theta}\|f(\cdot,y_d)\|_{L^p(\R^{d-1})} dy_d\Big]^qdx_d\right\}^{1/q}\\
         &\lesi t^{-\f{d-1}{\alpha}(\f{1}{p}-\f{1}{q})-\f{1}{\alpha}}\Big[\int_0^{ t^{1/\alpha}}\Big(\f{x_d}{t^{1/\alpha}}\Big)^{-\theta q}dx_d\Big]^{1/q}  \int_0^{t^{1/\alpha}}   \Big(\f{y_d}{t^{1/\alpha}}\Big)^{-\theta}\|f(\cdot,y_d)\|_{L^p(\R^{d-1})} dy_d\\
         &\lesi t^{-\f{d-1}{\alpha}(\f{1}{p}-\f{1}{q})-\f{1}{\alpha}}t^{\f{1}{\alpha q}} \int_0^{t^{1/\alpha}}   \Big(\f{y_d}{t^{1/\alpha}}\Big)^{-\theta}\|f(\cdot,y_d)\|_{L^p(\R^{d-1})} dy_d.
    \end{aligned}
  \]
  By H\"older's inequality,
  \[
    \begin{aligned}
      \int_0^{t^{1/\alpha}}   \Big(\f{y_d}{t^{1/\alpha}}\Big)^{-\theta}\|f(\cdot,y_d)\|_{L^p(\R^{d-1})} dy_d
      &\lesi \|f\|_{L^p(\Rd_+)}\Big[\int_0^{t^{1/\alpha}} \Big(\f{y_d}{t^{1/\alpha}}\Big)^{-\theta p'} dy_d\Big]^{\frac{1}{p'}}
        \lesi t^{\f{1}{\alpha p'}}\|f\|_{L^p(\Rd_+)}.
    \end{aligned}
  \]
  Therefore,
  \[
    \begin{aligned}
      I_1 \lesi t^{-\f{d}{\alpha}(\f{1}{p}-\f{1}{q})}\|f\|_{L^p(\Rd_+)}.
    \end{aligned}
  \]
  We now consider $I_2$. Here, we have
  \[
    D_\theta(x_d,t) = \Big(\f{x_d}{t^{1/\alpha}}\Big)^{-\theta }, \quad D_\theta(y_d,t)= 1.
  \]
  By \eqref{eq:lem1-Tt} with $N=d-1$, we have, for any $\beta'\in(0, \beta)$,
  \[
    \begin{aligned}
      I_2&\lesi t^{-\f{d-1}{\alpha}(\f{1}{p}-\f{1}{q})}\left\{\int_0^{ t^{1/\alpha}}\Big[\int_0^{t^{1/\alpha}}  t^{-\frac1\alpha}\Big(\f{x_d}{t^{1/\alpha}}\Big)^{-\theta}\Big(\f{t^{1/\alpha}+|x_d-y_d|}{t^{1/\alpha}}\Big)^{-1-\beta'} \|f(\cdot,y_d)\|_{L^p(\R^{d-1})} dy_d\Big]^qdx_d\right\}^{\frac1q}.
    \end{aligned}
  \]
  By H\"older's inequality and \eqref{eq:lem1-Tt} with $N=1$,
  \[
    \begin{aligned}
      & \int_0^{t^{1/\alpha}}   \Big(\f{t^{1/\alpha}+|x_d-y_d|}{t^{1/\alpha}}\Big)^{-1-\beta'}\|f(\cdot,y_d)\|_{L^p(\R^{d-1})} dy_d\\
      & \quad \lesi \|f\|_{L^p(\Rd_+)}\Big[\int_0^{\vc}   \Big(\f{t^{1/\alpha}+|x_d-y_d|}{t^{1/\alpha}}\Big)^{-(1+\beta')p'} dy_d\Big]^{1/p'}
        \lesi t^{\f{1}{\alpha p'}}\|f\|_{L^p(\Rd_+)}.
    \end{aligned}
  \] 
  Therefore,
  \[
    \begin{aligned}
      I_2&\lesi t^{-\f{d-1}{\alpha}(\f{1}{p}-\f{1}{q}) -\f{1}{\alpha}+\f{1}{\alpha p'} } \|f\|_{L^p(\Rd_+)}\Big[\int_0^{t^{1/\alpha}}  t^{-\frac1\alpha}\Big(\f{x_d}{t^{1/\alpha}}\Big)^{-\theta q} dx_d\Big]^{1/q}\\
         &\lesi t^{-\f{d-1}{\alpha}(\f{1}{p}-\f{1}{q}) -\f{1}{\alpha}+\f{1}{\alpha p'} +\f{1}{\alpha q}} \|f\|_{L^p(\Rd_+)}
           \lesi t^{-\f{d}{\alpha}(\f{1}{p}-\f{1}{q})} \|f\|_{L^p(\Rd_+)}.
    \end{aligned}
  \]
  Similarly, we also have
  \[
    I_3+I_4 \lesi t^{-\f{d}{\alpha}(\f{1}{p}-\f{1}{q})} \|f\|_{L^p(\Rd_+)}.
  \]
  This completes our proof.
\end{proof}

The following two lemmas will be important to prove Theorem~\ref{density}.

\begin{lem}\label{lem-boundedness of LsetL}
  Let $\alpha\in (0,2]$ and let $\lambda \ge 0$ when $\alpha<2$ and $\lambda\ge -1/4$ when $\alpha=2$. Let  $\sigma$ be defined by \eqref{eq-sigma}. Let $\gamma\in (0,1]$. Then for $1<p<\vc$ we have
  \begin{align}
    \label{eq:lem-boundedness of LsetL1}
    \|L_\lambda^\gamma e^{-tL_\lambda} \|_{p\to p} \lesi  t^{-\gamma},
  \end{align}
  and
  \begin{align}
    \label{eq:lem-boundedness of LsetL2}
    \|L_\lambda^{-\gamma} \big[e^{-tL_\lambda}-e^{-sL_\lambda}\big] \|_{p\to p}\lesi (t^\gamma -s^\gamma)
  \end{align}
  for all $t\ge s>0$.
\end{lem}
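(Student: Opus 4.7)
The plan is to combine the pointwise kernel bound for $(tL_\lambda)^k\me{-tL_\lambda}$ from Proposition~\ref{thm-ptk} with the $L^p\to L^p$ estimate in Lemma~\ref{lem-Lp boundedness of Tt}, and then upgrade from integer to fractional powers of $L_\lambda$ via a standard Bochner subordination identity. Under the hypotheses of the lemma, a direct inspection of \eqref{eq - C function}--\eqref{eq-sigma} gives $\sigma\ge 0$: indeed $\sigma\ge\tfrac12$ when $\alpha=2$ (since $\lambda\ge-\tfrac14$ gives $\sigma=\tfrac12(1+\sqrt{1+4\lambda})$), and $\sigma\ge(\alpha-1)_+\ge 0$ when $\alpha<2$ and $\lambda\ge 0$. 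Consequently the kernel of $(tL_\lambda)^k\me{-tL_\lambda}$ satisfies \eqref{kernelTt} with $\theta=-\sigma\le 0$, so $p_\theta=\infty$ and Lemma~\ref{lem-Lp boundedness of Tt} applies in the full range $1<p<\infty$.

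For \eqref{eq:lem-boundedness of LsetL1} with $\gamma=1$, Proposition~\ref{thm-ptk} with $k=1$ together with Lemma~\ref{lem-Lp boundedness of Tt} (taken at $p=q$) immediately yields $\|tL_\lambda\me{-tL_\lambda}\|_{p\to p}\lesssim 1$, i.e.\ $\|L_\lambda\me{-tL_\lambda}\|_{p\to p}\lesssim t^{-1}$. For $\gamma\in(0,1)$, I start from
\[
L_\lambda^\gamma=\frac{1}{\Gamma(-\gamma)}\int_0^\infty\bigl(\me{-sL_\lambda}-I\bigr)\frac{ds}{s^{1+\gamma}},
\]
multiply by $\me{-tL_\lambda}$, use $\me{-(s+t)L_\lambda}-\me{-tL_\lambda}=-\int_t^{t+s}L_\lambda\me{-uL_\lambda}\,du$, and apply Fubini with the substitution $v=u-t$ to obtain
\[
L_\lambda^\gamma\me{-tL_\lambda}=\frac{1}{\Gamma(1-\gamma)}\int_0^\infty v^{-\gamma}L_\lambda\me{-(v+t)L_\lambda}\,dv.
\]
Minkowski's inequality together with the $\gamma=1$ estimate then gives
\[
\|L_\lambda^\gamma\me{-tL_\lambda}\|_{p\to p}\lesssim\int_0^\infty v^{-\gamma}(v+t)^{-1}\,dv=B(1-\gamma,\gamma)\,t^{-\gamma},
\]
which is precisely \eqref{eq:lem-boundedness of LsetL1}.

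For \eqref{eq:lem-boundedness of LsetL2}, I write
\[
L_\lambda^{-\gamma}\bigl[\me{-tL_\lambda}-\me{-sL_\lambda}\bigr]=-\int_s^t L_\lambda^{1-\gamma}\me{-vL_\lambda}\,dv,
\]
and then apply Minkowski's inequality and \eqref{eq:lem-boundedness of LsetL1} with exponent $1-\gamma\in[0,1)$ (the endpoint $1-\gamma=0$ being simply the $L^p$-boundedness of $\me{-vL_\lambda}$, supplied by Lemma~\ref{lem-Lp boundedness of Tt} applied to the heat kernel bound in Theorem~\ref{thm-heatkernelLa- alpha < 2}) to deduce
\[
\|L_\lambda^{-\gamma}[\me{-tL_\lambda}-\me{-sL_\lambda}]\|_{p\to p}\lesssim\int_s^t v^{\gamma-1}\,dv=\gamma^{-1}(t^\gamma-s^\gamma).
\]

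The substantive analytic input is entirely contained in Proposition~\ref{thm-ptk}; everything else is bookkeeping. The only real care required is justifying the subordination identity and the interchange of integrals; this is handled by first verifying the identities on a dense subspace of $L^p$ (for instance, on functions in the range of $\me{-\varepsilon L_\lambda}\chi_{[\varepsilon,R]}(L_\lambda)$ seen in $L^2\cap L^p$) where all integrals are absolutely convergent, and then extending to $L^p$ by density using the uniform bounds established along the way.
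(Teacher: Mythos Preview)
Your proof is correct and follows essentially the same approach as the paper: both rely on Proposition~\ref{thm-ptk} and Lemma~\ref{lem-Lp boundedness of Tt} to get $\|(u L_\lambda)\me{-uL_\lambda}\|_{p\to p}\lesssim 1$, then pass to fractional powers via the Balakrishnan formula $L_\lambda^\gamma=\frac{1}{\Gamma(1-\gamma)}\int_0^\infty u^{-\gamma}L_\lambda\me{-uL_\lambda}\,du$, and handle \eqref{eq:lem-boundedness of LsetL2} through Duhamel. Your treatment is in fact slightly tidier in two places: you make explicit that $\sigma\ge 0$ under the stated hypotheses (so $p_\theta=\infty$ in Lemma~\ref{lem-Lp boundedness of Tt}), and for \eqref{eq:lem-boundedness of LsetL2} you simply write $L_\lambda^{-\gamma}[\me{-tL_\lambda}-\me{-sL_\lambda}]=-\int_s^t L_\lambda^{1-\gamma}\me{-vL_\lambda}\,dv$ and invoke \eqref{eq:lem-boundedness of LsetL1} as a black box, whereas the paper unfolds both the $L_\lambda^{-\gamma}$ integral and Duhamel and bounds the resulting double integral directly---the two computations are of course equivalent.
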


\begin{rem}
  \label{rem-lem-boundedness of LsetL}
  Let $\alpha\in(0,2)$, $\lambda\in[\lambda_*,0)$ and assume that $\me{-tL_\lambda}(x,y)$ satisfies the bound in \eqref{eq:thm-heatkernelLa- alpha < 2} with $\sigma$ defined by \eqref{eq-sigma}. Then Lemma \ref{lem-boundedness of LsetL}  remains valid for $p\in(1,\infty)$ obeying $\f{1}{1+\sigma}=(p_{-\sigma})'<p< p_{-\sigma}=-\f{1}{\sigma}$. This follows by the same arguments as in the proof below, taking into account Remark~\ref{rem-thm-ptk}.
\end{rem}

\begin{proof}
  Since
  \[
    \La^{\gamma}=\f{1}{\Gamma(1-\gamma)}\int_{0}^\vc u^{1-\gamma} \La e^{-u\La}\f{du}{u},
  \]
  we get
  \[
    \La^{\gamma}e^{-tL_\lambda}=\f{1}{\Gamma(1-\gamma)}\int_{0}^\vc \f{u^{1-\gamma}}{u+t} (u+t)\La e^{-(u+t)\La}\f{du}{u}.
  \]
  From Lemma \ref{lem-Lp boundedness of Tt} and Proposition \ref{thm-ptk},  $\|(u+v)L_\lambda e^{-(u+v)L_\lambda}\|_{p\to p}\lesi 1$. Therefore,
  \[
    \begin{aligned}
      \|\La^{\gamma}e^{-tL_\lambda}\|_{p\to p}&\lesi  \int_{0}^\vc \f{u^{1-\gamma}}{u+t} \f{du}{u} \lesi t^{-\gamma}.
    \end{aligned}
  \]
  This shows \eqref{eq:lem-boundedness of LsetL1}.
  To show \eqref{eq:lem-boundedness of LsetL2}, we use, for $\gamma\in (0,1)$,
  \[
    L_\lambda^{-\gamma} = \f{1}{\Gamma(\gamma)}\int_0^\vc u^\gamma e^{-uL_\lambda}\f{du}{u}
  \]
  and Duhamel's formula
  \[
    e^{-tL_\lambda}-e^{-sL_\lambda} =-\int_{s}^t vL_\lambda e^{-vL_\lambda}\f{dv}{v}
  \]
  to write
  \[
    L_\lambda^{-\gamma} \big[e^{-tL_\lambda}-e^{-sL_\lambda}\big] =-\f{1}{\Gamma(\gamma)}\int_s^t\int_0^\vc \f{vu^\gamma}{u+v} (u+v)L_\lambda e^{-(u+v)L_\lambda}\f{du}{u}\f{dv}{v}.
  \]
  This, together with $\|(u+v)L_\lambda e^{-(u+v)L_\lambda}\|_{p\to p}\lesi 1$, implies
  \[
    \begin{aligned}
      \big\|L_\lambda^{-\gamma} \big[e^{-tL_\lambda}-e^{-sL_\lambda}\big]\big\|_{p\to p}
      &\lesi  \int_s^t\int_0^\vc \f{vu^\gamma}{u+v} \f{du}{u}\f{dv}{v}\\
      &\lesi  \int_s^t\int_0^\vc \f{vu^\gamma}{u+v} \f{du}{u}\f{dv}{v}
      &\lesi  \int_s^t v^\gamma  \f{dv}{v}\simeq t^\gamma -s^\gamma.
    \end{aligned}
  \]
  This concludes the proof.
\end{proof}

\begin{lem}
  \label{Calderon-reproducing formula}
  Let $\alpha\in (0,2]$ and let $\lambda \ge 0$ when $\alpha<2$ and $\lambda\ge -1/4$ when $\alpha=2$. Let $\sigma$ be defined by \eqref{eq-sigma}. Then for $f\in L^p(\Rd_+), 1<p<\vc$, we have
  \begin{enumerate}[{\rm (a)}]
  \item $\displaystyle \lim_{t\to 0} \|(I-e^{-tL_\lambda})f\|_{p} =0 $ and
  \item $\displaystyle \lim_{t\to \vc} \|e^{-tL_\lambda}f\|_{p} =0 $.
  \end{enumerate}
\end{lem}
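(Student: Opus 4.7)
The strategy for both parts rests on the uniform bound
\begin{align*}
  \sup_{t>0} \|e^{-t\La}\|_{L^p(\Rd_+)\to L^p(\Rd_+)} \lesi 1, \qquad 1<p<\vc,
\end{align*}
which is the special case $p=q$ of Lemma \ref{lem-Lp boundedness of Tt}. Indeed, under the standing hypotheses on $\lambda$ one has $\sigma\ge\max\{0,\alpha-1\}\ge0$, so the exponent $\theta=-\sigma$ arising from the heat kernel bound in Theorem \ref{thm-heatkernelLa- alpha < 2} satisfies $\theta\le 0$; consequently $(p_\theta)'=1$ and $p_\theta=\vc$, so every $p\in(1,\vc)$ is admissible. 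Combined with the density of $C_c^\infty(\Rd_+)$ in $L^p(\Rd_+)$, a standard three-epsilon argument reduces both (a) and (b) to showing convergence for $f=\phi\in C_c^\infty(\Rd_+)$.

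For (a), given $\phi\in C_c^\infty(\Rd_+)$, I would invoke Duhamel's identity
\begin{align*}
  e^{-t\La}\phi-\phi = -\int_0^t e^{-s\La}L_\lambda\phi\,ds \quad \text{in } L^p(\Rd_+),
\end{align*}
which holds provided $L_\lambda\phi\in L^p(\Rd_+)$. This is the case because $\phi$ is smooth and compactly supported away from $\partial\Rd_+$: the potential term $\lambda x_d^{-\alpha}\phi$ is bounded with compact support; when $\alpha=2$, $-\Delta\phi\in C_c^\infty$ trivially; and when $\alpha\in(0,2)$, the pointwise integral formula for $(-\Delta)^{\alpha/2}_{\Rd_+}\phi$ stated in the introduction yields bounded values on $\supp\phi$ and algebraic decay of order $|x|^{-d-\alpha}$ at infinity, hence $L^p$-integrability for all $p\in(1,\vc)$. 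The uniform bound then gives
\begin{align*}
  \|e^{-t\La}\phi-\phi\|_{L^p(\Rd_+)} \lesi t\,\|L_\lambda\phi\|_{L^p(\Rd_+)} \longrightarrow 0 \quad \text{as } t\to 0^+.
\end{align*}

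For (b), the same reduction applies. For $\phi\in C_c^\infty(\Rd_+)$ and any fixed $p_0\in(1,p)$ (which exists since $p>1$), we have $\phi\in L^{p_0}(\Rd_+)$, and Lemma \ref{lem-Lp boundedness of Tt} with source $L^{p_0}$ and target $L^p$ yields
\begin{align*}
  \|e^{-t\La}\phi\|_{L^p(\Rd_+)} \lesi t^{-\f{d}{\alpha}(\f{1}{p_0}-\f{1}{p})}\|\phi\|_{L^{p_0}(\Rd_+)} \longrightarrow 0 \quad \text{as } t\to\vc,
\end{align*}
since the exponent is strictly negative.

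The only genuine technical point is the verification $L_\lambda\phi\in L^p$ in the nonlocal case; this is an elementary inspection of the regional-fractional-Laplacian integral. The justification of Duhamel's formula in $L^p$ is then automatic: both sides are already known to coincide in $L^2$ (where $\phi\in\dom(L_\lambda)$ and the semigroup is strongly continuous), hence pointwise almost everywhere, and membership in $L^p$ of each side has just been checked.
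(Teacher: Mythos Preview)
Your proof is correct. Part (b) is handled exactly as in the paper. For part (a), your argument is genuinely more direct than the paper's: the paper first reduces to $p\le 2$ by duality, writes $I-e^{-tL_\lambda}=-\int_0^t L_\lambda e^{-sL_\lambda}\,ds$ with the operator $L_\lambda e^{-sL_\lambda}$ acting on $f$, and then splits $\Rd_+$ into $4B$ (handled via H\"older and the $L^2$ theory, using $L_\lambda f\in L^2$) and $\Rd_+\setminus 4B$ (handled via the off-diagonal estimates of Theorem~\ref{thm-Tt}). You instead commute to $e^{-sL_\lambda}L_\lambda\phi$ and use only the uniform $L^p$-bound on the semigroup together with the elementary observation that $L_\lambda\phi\in L^p(\Rd_+)$ for every $p\in(1,\vc)$; this is precisely the $s=2$ case of Lemma~\ref{domain}, which the paper proves later and independently. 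Your route avoids the spatial decomposition and the appeal to Theorem~\ref{thm-Tt} entirely, at the cost of anticipating (the easy case of) Lemma~\ref{domain}. The paper's approach, on the other hand, keeps the operator form $L_\lambda e^{-sL_\lambda}$ and thereby illustrates how the off-diagonal machinery already in place suffices.
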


\begin{rem}
  \label{rem-lem-Calderon-reproducing formula}
  Let $\alpha\in(0,2)$, $\lambda\in[\lambda_*,0)$ and assume that $\me{-tL_\lambda}(x,y)$ satisfies the bound in \eqref{eq:thm-heatkernelLa- alpha < 2} with $\sigma$ defined by \eqref{eq-sigma}. Then Lemma \ref{Calderon-reproducing formula}  remains valid for $p\in(1,\infty)$ obeying $\f{1}{1+\sigma}=(p_{-\sigma})'<p< p_{-\sigma}=-\f{1}{\sigma}$. This follows by the same arguments as in the proof below, taking into account Remark \ref{rem-thm-ptk}.
\end{rem}

\begin{proof}
  (a) It suffices to prove for $p\le 2$ since the case $p> 2$ can be obtained by duality. Since $\|e^{-tL_\lambda}\|_{p\to p}\lesi 1$ for all $t$ and $C^\vc_c(\Rd_+)$ is dense in $L^p$, it suffices to prove the lemma for $f\in C^\vc_c(\Rd_+)$. Assume that $\supp f \subset B$ for some ball $B\subset \Rd_+$. By
  \[ 
    I-e^{-tL_\lambda} =-\int_0^t L_\lambda e^{-sL_\lambda} ds,
  \]
  we have
  \[
    \|(I-e^{-tL_\lambda})f\|_{p}\le \Big\|\int_0^t L_\lambda e^{-sL_\lambda}f ds\Big\|_{L^p(4B)} +\Big\|\int_0^t L_\lambda e^{-sL_\lambda}f ds\Big\|_{L^p(\Rd_+\backslash 4B)}.
  \]
  By H\"older's inequality,
  \[
    \begin{aligned}
      \Big\|\int_0^t L_\lambda e^{-sL_\lambda}f ds\Big\|_{L^p(4B)}\le |4B|^{\f{2-p}{2p}}\Big\|\int_0^t L_\lambda e^{-sL_\lambda}f ds\Big\|_{L^2(4B)}.
    \end{aligned}
  \]
  Applying Minkowski's inequality,
  \[
    \begin{aligned}
      \|(I-e^{-tL_\lambda})f\|_{L^p(4B)}
      &\le |4B|^{\f{2-p}{2p}}\Big\| \int_0^t  e^{-sL_\lambda}L_\lambda f ds\Big\|_{L^2(4B)}
      \le t|4B|^{\f{2-p}{2p}}\|L_\lambda f\|_{2}.
    \end{aligned}
  \]
  Since $L_\lambda f \in L^2(\Rd_+)$ by \cite[Lemma~15]{FrankMerz2023}, we have
  \[
    \lim_{t\to 0}\|(I-e^{-tL_\lambda})f\|_{L^p(4B)} = 0.
  \]
  For the second term we write
  \[
    \begin{aligned}
      \Big\|\int_0^t L_\lambda e^{-sL_\lambda}f ds\Big\|_{L^p(\Rd_+\backslash 4B)}
      &\le  \int_0^t \|sL_\lambda e^{-sL_\lambda}f\|_{L^p(\Rd_+\backslash 4B)} \f{ds}{s} \\
      &\le \int_0^t\sum_{j\ge 2} \|sL_\lambda e^{-sL_\lambda}f\|_{L^p(S_j(B))} \f{ds}{s}.
    \end{aligned}
  \]
  Then by Proposition \ref{thm-ptk} and Theorem \ref{thm-Tt}, we have for $t<r_B^{\alpha}$,
  \[
    \begin{aligned}
      \sum_{j\ge 2} \|L_\lambda e^{-sL_\lambda}f\|_{L^p(S_j(B))}
      &\le\sum_{j\ge 2} |2^jB|^{1/p}   \Big(\f{r_B}{s^{1/\alpha}}\Big)^{d} \Big(\f{2^jr_B}{s^{1/\alpha}}\Big)^{-d-\beta} \Big(\fint_B|f|^r\Big)^{1/r}\\
      &\le C(|B|,p,r,\|f\|_r) s^{\beta/\alpha}.
    \end{aligned}
  \]
  It follows that
  \[
    \begin{aligned}
      \Big\|\int_0^t L_\lambda e^{-sL_\lambda}f ds\Big\|_{L^p(\Rd_+\backslash \int_0^t  4B)}&\lesi \int_0^t s^{\beta/\alpha}\f{ds}{s} \simeq t^{\beta/\alpha}\to 0 \ \  \text{as $t\to 0$}.
    \end{aligned}
  \]
  
  (b) Similarly as in (a), we assume $f\in C^\vc_c(\Rd_+)$ without loss of generality. Fix $1<r<p$. Then,
  \[
    \|e^{-tL_\lambda}f\|_{p}\lesi t^{-\f{d}{\alpha}(\frac{1}{r}-\f{1}{p})}\|f\|_r,
  \]
  which implies
  \[
    \displaystyle \lim_{t\to \vc} \|e^{-tL_\lambda}f\|_{p} =0. 
  \]
  This completes our proof.
\end{proof}

\section{Proof of the square function estimates (Theorem~\ref{squarefunctions})}
\label{s:squarefunctions}

For $\gamma>0$, recall the square function
\begin{align*}
  (S_{\La,\gamma}f)(x) := \Big(\int_0^\vc|(t\La)^{\gamma}e^{-t\La}f|^2\f{dt}{t}\Big)^{1/2}.
\end{align*}

We recall two criteria for singular integrals to be bounded on Lebesgue spaces. These will play an important role in the proof of the boundedness of the square functions. The first theorem gives a criterion on the boundedness on $L^p(\Rd)$ spaces with $p\in (1,2)$, while the second one covers the range $p>2$. The latter also involves, for $r>0$, the Hardy--Littlewood maximal operator $\mathcal{M}_r$, which acts as
$$
(\mathcal{M}_rf)(x) := \sup_{B\ni x}\Big(\f{1}{|B|}\int_B|f(y)|^r\,dy\Big)^{1/r}, \quad x\in \mathbb{R}^d.
$$
Here the supremum is taken over all balls $B\subseteq\R^d$ containing $x$.

\begin{thm}[{\cite[Theorem~1.1]{Auscher2007}}]
  \label{thm1-Auscher}
  Let $1\le p_0< 2$ and let $T$ be a sublinear operator which is bounded on $L^2(\Rd)$.  Assume that there exists a family of operators $\{\mathcal{A}_t\}_{t>0}$ satisfying that for $j\ge 2$ and every ball $B\subseteq\R^d$
  \begin{equation}\label{eq1-BZ}
    \Big(\fint_{S_j(B)}|T(I-\mathcal{A}_{r_B})f|^{2}\Big)^{1/2}\le
    \alpha(j)\Big(\fint_B |f|^{p_0} \Big)^{1/p_0},
  \end{equation}
  and
  \begin{equation}\label{eq1-BZ-bis}
    \Big(\fint_{S_j(B)}|\mathcal{A}_{r_B}f|^{2} \Big)^{1/2}\le
    \alpha(j)\Big(\fint_B |f|^{p_0} \Big)^{1/p_0},
  \end{equation}
  for all $f$ supported in $B$. If $\sum_{j\geq2} \alpha(j)2^{jd}<\vc$, then $T$ is bounded on $L^p(\mathbb{R}^d)$ for all $p\in (p_0,2)$.
\end{thm}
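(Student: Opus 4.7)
The plan is to reduce the claim to weak-type $(p_0,p_0)$ for $T$; once that is in hand, Marcinkiewicz interpolation against the $L^2$-boundedness of $T$ yields $L^p$-boundedness for every $p\in(p_0,2)$. Fix $\alpha>0$ and $f\in L^{p_0}(\R^d)$, and perform a Calder\'on--Zygmund decomposition of $|f|^{p_0}$ at height $\alpha^{p_0}$. This produces a family of balls $\{B_i\}$ of bounded overlap with $\sum_i|B_i|\lesssim\alpha^{-p_0}\|f\|_{p_0}^{p_0}$, and a decomposition $f=g+\sum_i b_i$ with $\operatorname{supp} b_i\subseteq B_i$, $\int_{B_i}|b_i|^{p_0}\lesssim\alpha^{p_0}|B_i|$, and $\|g\|_\infty\lesssim\alpha$, so that $\|g\|_2^2\lesssim\alpha^{2-p_0}\|f\|_{p_0}^{p_0}$.

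Next, I would split
\[
  Tf \;=\; Tg \;+\; T\!\Bigl(\sum_i\mathcal{A}_{r_{B_i}}b_i\Bigr) \;+\; \sum_i T(I-\mathcal{A}_{r_{B_i}})b_i,
\]
and include $E_0:=\bigcup_i 4B_i$ in the exceptional set, at a cost of $\lesssim\alpha^{-p_0}\|f\|_{p_0}^{p_0}$ in measure. For the good part, Chebyshev together with the $L^2$-boundedness of $T$ gives $|\{|Tg|>\alpha/4\}|\lesssim\alpha^{-2}\|g\|_2^2\lesssim\alpha^{-p_0}\|f\|_{p_0}^{p_0}$. For the off-diagonal bad term, on $\R^d\setminus E_0$ I would pass to $L^1$: Cauchy--Schwarz combined with the annular hypothesis \eqref{eq1-BZ} yields
\[
  \int_{S_j(B_i)}\!|T(I-\mathcal{A}_{r_{B_i}})b_i|\,dx \;\le\; |S_j(B_i)|^{1/2}\Bigl(\int_{S_j(B_i)}\!|T(I-\mathcal{A}_{r_{B_i}})b_i|^2\Bigr)^{\!1/2} \;\lesssim\; \alpha(j)\,2^{jd}\,\alpha\,|B_i|,
\]
using $\|b_i\|_{L^{p_0}(B_i)}\lesssim\alpha|B_i|^{1/p_0}$. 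Summing in $j\ge 2$ via the hypothesis $\sum_j\alpha(j)2^{jd}<\infty$, and then in $i$, gives an $L^1(E_0^c)$-norm of order $\alpha^{1-p_0}\|f\|_{p_0}^{p_0}$, whence Chebyshev supplies the desired weak-type contribution.

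Finally, setting $h:=\sum_i\mathcal{A}_{r_{B_i}}b_i$, I would estimate $\|h\|_2$ by duality against $\varphi\in L^2$ with $\|\varphi\|_2=1$, decomposing $\langle\mathcal{A}_{r_{B_i}}b_i,\varphi\rangle$ over the annuli $S_j(B_i)$ and invoking \eqref{eq1-BZ-bis} to get $\|\mathcal{A}_{r_{B_i}}b_i\|_{L^2(S_j(B_i))}\lesssim\alpha(j)\,2^{jd/2}|B_i|^{1/2}\alpha$. One then applies Cauchy--Schwarz in $i$ and uses that, for fixed $j$, the annuli $\{S_j(B_i)\}_i$ cover each point at most $\lesssim 2^{jd}$ times (a consequence of the bounded overlap of the $B_i$). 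The sum in $j$ converges again exactly because $\sum_j\alpha(j)2^{jd}<\infty$, yielding $\|h\|_2^2\lesssim\alpha^{2-p_0}\|f\|_{p_0}^{p_0}$; combined with the $L^2$-boundedness of $T$, this produces $|\{|Th|>\alpha/4\}|\lesssim\alpha^{-p_0}\|f\|_{p_0}^{p_0}$, completing the weak-type estimate.

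The main obstacle is the $L^2$-control of $h$: since $\mathcal{A}_{r_{B_i}}$ need not preserve the support in $B_i$, a naive triangle inequality in $L^2$ is too lossy, and one must exploit cancellation between different $B_i$ via the Schur-type duality argument above. The precise interplay between the annular decay $\alpha(j)$ and the volume growth $2^{jd}$ of $S_j(B)$ is exactly what the hypothesis $\sum_{j\ge 2}\alpha(j)2^{jd}<\infty$ is designed to absorb, and I expect verifying the overlap/Schur step rigorously to be the most delicate piece of the argument.
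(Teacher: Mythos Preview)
The paper does not prove this theorem at all; it is quoted verbatim from \cite[Theorem~1.1]{Auscher2007} and used as a black box (see the sentence immediately after Theorem~\ref{thm2-Auscher}). So there is no in-paper proof to compare against. Your outline is in fact the standard Blunck--Kunstmann/Auscher argument: Calder\'on--Zygmund decomposition of $|f|^{p_0}$ at height $\alpha^{p_0}$, the three-term splitting $Tg + Th + \sum_i T(I-\mathcal A_{r_{B_i}})b_i$, Chebyshev on $g$, the $L^1$-off-diagonal estimate via \eqref{eq1-BZ} on $(\bigcup_i 4B_i)^c$, and an $L^2$-bound on $h=\sum_i\mathcal A_{r_{B_i}}b_i$ using \eqref{eq1-BZ-bis}. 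This is exactly how the cited reference proceeds.

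There is, however, one genuine gap in your sketch. The assertion that ``for fixed $j$, the annuli $\{S_j(B_i)\}_i$ cover each point at most $\lesssim 2^{jd}$ times, as a consequence of the bounded overlap of the $B_i$'' is false in general: disjoint balls of widely different radii can have dilates $2^jB_i$ with arbitrarily large overlap (take $B_k=B(2^{-k}e_1,2^{-k-2})$; the balls are disjoint but all $4B_k$ contain the origin). The correct way to close the $\|h\|_2$ estimate is not an overlap count but a maximal-function argument: for any $y\in B_i$,
\[
  \|\varphi\|_{L^2(S_j(B_i))}\le |2^jB_i|^{1/2}\,(\mathcal M_2\varphi)(y),
\]
so that
\[
  |\langle \mathcal A_{r_{B_i}}b_i,\varphi\rangle|
  \le \sum_{j\ge 0}\alpha(j)\,2^{jd}\,\alpha\,|B_i|\,\inf_{B_i}\mathcal M_2\varphi
  \lesssim \alpha\int_{B_i}\mathcal M_2\varphi.
\]
Summing over $i$ (here the bounded overlap of the $B_i$ themselves is used, and nothing more), then Cauchy--Schwarz and the $L^2$-boundedness of $\mathcal M_2$ give $\|h\|_2^2\lesssim\alpha^{2-p_0}\|f\|_{p_0}^{p_0}$. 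With this correction your argument goes through and matches the proof in \cite{Auscher2007}.
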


\begin{thm}[{\cite[Theorem~1.2]{Auscher2007}}]
  \label{thm2-Auscher}
  Let $2< q_0\le \infty.$  Let $T$ be a bounded sublinear operator on $L^{2}(\mathbb{R}^d)$. Assume that there exists  a family of operators $\{\mathcal{A}_t\}_{t>0}$  satisfying that
  \begin{eqnarray}\label{e1-Martell}
    \Big( \fint_{B} \big| T(I-\mathcal{A}_{r_B})f\big|^{2}dx\Big)^{1/2} \le
    C \mathcal{M}_{2}(f)(x),
  \end{eqnarray}
  and
  \begin{eqnarray}\label{e2-Martell}
    \Big( \fint_{B} \big| T\mathcal{A}_{r_B}f\big|^{q_0}dx\Big)^{1/q_0} \le
    C \mathcal{M}_{2}(Tf)(x),
  \end{eqnarray}
  \noindent
  for all balls $B\subseteq\R^d$ with radius $r_B$, all $f \in C^{\infty}_c(\mathbb{R}^d) $ and all $x\in B$. Then $T$ is bounded on $L^p(\mathbb{R}^d)$ for all
  $2<p<q_0$.
\end{thm}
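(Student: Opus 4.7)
The plan is to adapt the classical good-$\lambda$ scheme to $T$ and the approximating family $\{\mathcal A_t\}_{t>0}$, combining a Whitney decomposition of a level set of $\mathcal M_2(Tf)$ with the two pointwise off-diagonal hypotheses \eqref{e1-Martell} and \eqref{e2-Martell}. The decisive ingredient is the $L^{q_0}$-improvement built into \eqref{e2-Martell}: a Chebyshev estimate converts it into a smallness factor of the form $K^{-q_0}$, and since we only need to absorb powers $K^{p-q_0}$ for $p<q_0$, this is precisely what makes the bootstrap close.

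First, by the standard truncation/density argument one reduces to $f\in C_c^\infty(\Rd)$, so that $Tf\in L^2(\Rd)$ is a concrete function and $\mathcal M_2(Tf)$ is finite a.e. For each $\lambda>0$ form the open level set $E_\lambda:=\{\mathcal M_2(Tf)>\lambda\}$ and decompose it into Whitney cubes $E_\lambda=\bigsqcup_i Q_i$ of bounded overlap such that $4Q_i$ meets $E_\lambda^c$. Pick $\bar x_i\in 4Q_i$ with $\mathcal M_2(Tf)(\bar x_i)\le\lambda$, and let $B_i$ be a ball of radius comparable to $r_{Q_i}$ containing $4Q_i$.

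Next, for parameters $K\gg1$ and $\gamma\ll1$ to be chosen, I would establish the good-$\lambda$ inequality
\begin{equation*}
  \bigl|\{x\in Q_i:\mathcal M_2(Tf)(x)>K\lambda,\ \mathcal M_2 f(x)\le\gamma\lambda\}\bigr|
  \lesi \bigl(K^{-q_0}+\gamma^2K^{-2}\bigr)|Q_i|
\end{equation*}
for every $i$ whose left-hand set is non-empty. To that end, split $Tf=T(I-\mathcal A_{r_{B_i}})f+T\mathcal A_{r_{B_i}}f$. Picking $x^\ast_i\in Q_i$ from the left-hand set above, hypothesis \eqref{e1-Martell} evaluated at $x^\ast_i\in B_i$ gives $\left(\fint_{B_i}|T(I-\mathcal A_{r_{B_i}})f|^2\right)^{1/2}\lesi\mathcal M_2 f(x^\ast_i)\le\gamma\lambda$, and the weak-$(2,2)$ boundedness of $\mathcal M_2$ then yields
\[
  \bigl|\{x\in Q_i:\mathcal M_2(T(I-\mathcal A_{r_{B_i}})f)(x)>K\lambda/2\}\bigr|\lesi \gamma^2K^{-2}|Q_i|.
\]
Similarly, hypothesis \eqref{e2-Martell} evaluated at $\bar x_i\in B_i$ gives $\left(\fint_{B_i}|T\mathcal A_{r_{B_i}}f|^{q_0}\right)^{1/q_0}\lesi\mathcal M_2(Tf)(\bar x_i)\le\lambda$, and a direct Chebyshev estimate together with the weak-$(q_0,q_0)$ boundedness of $\mathcal M_2$ yields
\[
  \bigl|\{x\in Q_i:\mathcal M_2(T\mathcal A_{r_{B_i}}f)(x)>K\lambda/2\}\bigr|\lesi K^{-q_0}|Q_i|.
\]

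Summing the above bounds over the Whitney cubes and integrating against $p\lambda^{p-1}\,d\lambda$ (after the change of variables $\mu=K\lambda$) converts the good-$\lambda$ inequality into
\begin{equation*}
  \|\mathcal M_2(Tf)\|_p^p\le C\bigl(K^{p-q_0}+\gamma^2K^{p-2}\bigr)\|\mathcal M_2(Tf)\|_p^p+C(K,\gamma)\|\mathcal M_2 f\|_p^p,
\end{equation*}
valid a priori for $f\in C_c^\infty(\Rd)$. For any fixed $p\in(2,q_0)$, first select $K$ so large that $CK^{p-q_0}<1/4$ and then $\gamma$ so small that $C\gamma^2K^{p-2}<1/4$; the first term on the right may be absorbed into the left, and the Hardy--Littlewood maximal theorem (which applies since $p>2$) finishes the proof via $\|Tf\|_p\le\|\mathcal M_2(Tf)\|_p\lesi\|\mathcal M_2 f\|_p\lesi\|f\|_p$.

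\textbf{Main obstacle.} The delicate point is the absorption step, which requires \emph{a priori} finiteness of $\|\mathcal M_2(Tf)\|_p$ — something the bare $L^2$-boundedness of $T$ does not supply. I would handle this by running the entire good-$\lambda$ argument with $\mathcal M_2(Tf)$ replaced by its bounded truncation $\mathcal M_2(Tf)\wedge N$ on the ball $B(0,N)$, obtaining constants independent of $N$, and then passing to the limit $N\to\infty$ by monotone convergence. A secondary technical nuisance is the verification that the weak-$(q_0,q_0)$ and weak-$(2,2)$ bounds for $\mathcal M_2$ can be applied with the correct uniform constants on the level sets restricted to $Q_i$; this is routine given the standard properties of the Hardy--Littlewood maximal function.
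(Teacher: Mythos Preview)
The paper does not prove this theorem; it is quoted verbatim from \cite[Theorem~1.2]{Auscher2007} and the authors explicitly refer the reader to \cite[p.~2]{Auscher2007} for the original proofs. So there is no proof in the paper to compare your proposal against.

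That said, your good-$\lambda$ scheme is essentially the argument used in Auscher's memoir (and in the earlier works of Martell and Auscher--Coulhon--Duong--Hofmann to which Auscher refers). One point in your sketch deserves tightening: when you pass from the level set $\{x\in Q_i:\mathcal M_2(Tf)(x)>K\lambda\}$ to the two pieces and invoke the weak-type bound for $\mathcal M_2$ applied to $T(I-\mathcal A_{r_{B_i}})f$ and $T\mathcal A_{r_{B_i}}f$, you are implicitly using that only balls of radius $\lesssim r_{Q_i}$ can be responsible for $\mathcal M_2(Tf)(x)>K\lambda$ at a point $x\in Q_i$ (any larger ball contains a point of $E_\lambda^c$, hence has average $\le\lambda$). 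This localization is what justifies applying Chebyshev with the averages over $B_i$ supplied by \eqref{e1-Martell}--\eqref{e2-Martell}; without it the global maximal function sees scales on which you have no control. You flag this as a ``secondary technical nuisance'', but it is really the hinge of the argument and should be stated explicitly. With that clarification and your truncation device for the a~priori finiteness, the proof is correct.
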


See \cite[p.~2]{Auscher2007} for references, where these theorems were originally proved. Using Theorems \ref{thm1-Auscher}--\ref{thm2-Auscher} with the $T=S_{L_\lambda,\gamma}$ on the left-hand side of \eqref{eq:defsquarefunction}, we give the

\begin{proof}[Proof of Theorem~\ref{squarefunctions}]
  We begin with some reductions.
  First, Formula~\eqref{eq:squarefunctions2} immediately follows from \eqref{eq:squarefunctions1}.
  Since the statement for $p=2$ follows from the spectral theorem, it suffices to consider $p\in(1,\infty)\setminus\{2\}$.
  By a duality argument, the lower bound in \eqref{eq:squarefunctions1} follows from the upper bound; see the end of this proof for an argument. Thus, it suffices to prove the upper bound.  
  To that end, we fix $\beta \in (0,\alpha)$ such that $\beta/\alpha>1-\gamma$.
  We now distinguish between $p<2$ and $p>2$.
  
  \bigskip
   
  \textbf{Step 1: The proof of the $L^p$-boundedness for $p<2$}
  
  \medskip
  
  Fix $1<p< 2$. We use Theorem \ref{thm1-Auscher} with $T=S_{L_\lambda,\gamma}$ and
  \[
    \mathcal A_{r_B} = I-(I-e^{-r_B^\alpha \La})^m, \ \ \  m>\beta +1.
  \]
  Thus, it suffices to prove
  \begin{equation}\label{eq1-thmSquareFunctions}
    \Big(\fint_{S_j(B)}|S_{\La,\gamma}(I-\mathcal A_{r_B})f(x)|^{2}dx\Big)^{1/2}\lesi 2^{-(d+\beta)j}\Big(\fint_B|f(x)|^{p}dx\Big)^{1/p}
  \end{equation}
  and
  \begin{equation}\label{eq2-thmSquareFunctions}
    \Big(\fint_{S_j(B)}| \mathcal A_{r_B}f(x)|^{2}dx\Big)^{1/2}\lesi 2^{-(d+\beta)j}\Big(\fint_B|f(x)|^{p}dx\Big)^{1/p}
  \end{equation}
  for $j\ge 2$ and for every function $f$ supported in $B$. Since 
  \[
    \mathcal A_{r_B} =\sum_{k=1}^m C^m_ke^{-kr_B^\alpha\La}
  \]
  for combinatorial factors $C_k^m\in\R$ with $C_0^m=1$, the estimate \eqref{eq2-thmSquareFunctions} follows directly from Theorem \ref{thm-heatkernelLa- alpha < 2} and Theorem \ref{thm-Tt}. It remains to prove \eqref{eq1-thmSquareFunctions}. To do this, we bound
  \begin{equation}
    \label{eq1-squarefunctions}
    \begin{aligned}
      \Big(\fint_{S_j(B)}|S_{\La,\gamma}&(I-e^{-r_B^\alpha\La})^mf|^{2}dx\Big)^{1/2}\\
                                        &\lesi \Big(\int_0^{r_B^\alpha}\left\|(t\La)^{\gamma}e^{-t\La}(I-e^{-r_B^\alpha\La})^mf\right\|_{L^{2}(S_j(B),\f{dx}{|S_j(B)|})}^2\f{dt}{t}\Big)^{1/2}\\
                                        & \quad+\Big(\int_{r_B^\alpha}^\vc\left\|(t\La)^{\gamma}e^{-t\La}(I-e^{-r_B^\alpha\La})^mf\right\|_{L^{2}(S_j(B),\f{dx}{|S_j(B)|})}^2\f{dt}{t}\Big)^{1/2}\\
                                        &=:E_1+E_2.
    \end{aligned}
  \end{equation}
  We first estimate $E_1$. Using
  $$
  \La^{\gamma}=\f{1}{\Gamma(1-\gamma)}\int_{0}^\vc u^{1-\gamma} \La e^{-u\La}\f{du}{u}.
  $$
  
  Using this and  Minkowski's inequality, we have 
  $$
  \begin{aligned}
    E_1&\lesi \Big(\int_0^{r_B^\alpha}\Big[\int_{0}^{r_B^\alpha}\Big(\f{u}{t}\Big)^{1-\gamma}\left\|t\La e^{-(t+u)\La}(I-e^{-r_B^\alpha\La})^mf\right\|_{L^{2}(S_j(B),\f{dx}{|S_j(B)|})}\f{du}{u}\Big]^2\f{dt}{t}\Big)^{1/2}\\
       & \ \ + \Big(\int_0^{r_B^\alpha}\Big[\int_{r_B^\alpha}^\vc\Big(\f{u}{t}\Big)^{1-\gamma}\Big\|t\La e^{-(t+u)\La}(I-e^{-r_B^\alpha\La})^mf\Big\|_{L^{2}(S_j(B),\f{dx}{|S_j(B)|})}\f{du}{u}\Big]^2\f{dt}{t}\Big)^{1/2}\\
       &=:E_{11}+E_{12}.
  \end{aligned}
  $$
  Using the identity 
  $$
  (I-e^{-r_B^\alpha\La})^m =\sum_{k=0}^m  (-1)^kC^m_ke^{-kr_B^\alpha\La},
  $$
  we have
  $$
  \begin{aligned}
    & E_{11} \lesi \Big(\int_0^{r_B^\alpha}\Big[\int_{0}^{r_B^\alpha}\Big(\f{u}{t}\Big)^{1-\gamma}\f{t}{t+u}\Big\|(t+u)\La e^{-(t+u)\La}f\Big\|_{L^{2}(S_j(B),\f{dx}{|S_j(B)|})}\f{du}{u}\Big]^2\f{dt}{t}\Big)^{1/2}\\ & \ + \sum_{k=1}^m |C_k^m| \Big(\int_0^{r_B^\alpha}\Big[\int_{0}^{r_B^\alpha}\Big(\f{u}{t}\Big)^{1-\gamma}\f{t}{t+u+kr_B^\alpha}\Big\|(t+u+kr_B^\alpha)\La e^{-(t+u+kr_B^\alpha)\La}f\Big\|_{L^{2}(S_j(B),\f{dx}{|S_j(B)|})}\f{du}{u}\Big]^2\f{dt}{t}\Big)^{1/2}.
  \end{aligned}
  $$
  By Proposition \ref{thm-ptk} and Theorem \ref{thm-Tt}, together with $t+u\lesi r_B^{\alpha}$ and $(t+u+ kr_B^\alpha)^{1/\alpha}\simeq_k r_B$ for $u, t\in (0,r_B^\alpha]$ and $k\ge 1$, we get, for some $\tilde C_k^m\gtrsim |C_k^m|$,
  $$
  \begin{aligned}
    E_{11}&\lesi  \Big(\int_0^{r_B^\alpha}\Big[\int_{0}^{r_B^\alpha}\Big(\f{u}{t}\Big)^{1-\gamma}\f{t}{t+u}\Big(\f{r_B}{(t+u)^{1/\alpha}}\Big)^d\Big(\f{2^jr_B}{(t+u)^{1/\alpha}}\Big)^{-d-\beta} \|f\|_{L^p(B,\f{dx}{|B|})}\f{du}{u}\Big]^2\f{dt}{t}\Big)^{1/2}\\ 
          & \ \ + \sum_{k=1}^m \tilde C_k^m \Big(\int_0^{r_B^\alpha}\Big[\int_{0}^{r_B^\alpha}\Big(\f{u}{t}\Big)^{1-\gamma}\f{t}{r_B^\alpha} 2^{-j(d+\beta)}\|f\|_{L^p(B,\f{dx}{|B|})}\f{du}{u}\Big]^2\f{dt}{t}\Big)^{1/2}.
  \end{aligned}
  $$
  Using also $\beta/\alpha>1-\gamma$, we get
  $$
  \begin{aligned}
    E_{11}&\lesi   2^{-j(d+\beta)} \Big(\fint_B|f|^{p}dx\Big)^{1/p},
  \end{aligned}
  $$
  as desired. To estimate $E_{12}$, we use
  \begin{equation}\label{eq2-squarefunction}
    (I-e^{-r_B^\alpha\La})^m=\int_0^{r_B^\alpha}\dots \int_0^{r_B^\alpha} \La^me^{-(s_1+\dots+s_m)\La}d\vec{s},
  \end{equation}
  where $d\vec{s}=ds_1\dots ds_m$, and estimate
  $$
  \begin{aligned}
    E_{12}&\lesi \Big(\int_0^{r_B^\alpha}\Big[\int_{[0,r_B^\alpha]^m}\int^\vc_{r_B^\alpha}\Big(\f{u}{t}\Big)^{1-\gamma}\Big\|t\La^{m+1} e^{-(t+u+s_1+\ldots+s_m)\La}f\Big\|_{L^{2}(S_j(B),\f{dx}{|S_j(B)|})}\f{du}{u}d\vec{s}\Big]^2\f{dt}{t}\Big)^{1/2}.
  \end{aligned}
  $$
  In this case, $u\simeq t+u+s_1+\ldots+s_m\ge r_B^\alpha$. Hence, by Proposition \ref{thm-ptk} and Theorem \ref{thm-Tt}, we obtain
  \begin{equation}\label{eq-I12}
    \begin{aligned}
      E_{12}&\lesi  \Big(\int^{r_B^\alpha}_0\Big[\int_{[0,r_B^\alpha]^m}\int^\vc_{r_B^\alpha}\Big(\f{u}{t}\Big)^{1-\gamma} \f{t}{u^{m+1}}\Big(\f{r_B}{u^{1/\alpha}}\Big)^{d-1/p'}  \Big(1+\f{u^{1/\alpha}}{2^jr_B}\Big)^{1/2}\Big(1+\f{2^jr_B}{u^{1/\alpha}}\Big)^{-(d+\beta)}\\
            & \quad \quad \quad \quad \times  \|f \|_{L^{p}(B,\f{dx}{|B|})}\f{du}{u}d\vec{s}\Big]^2\f{dt}{t}\Big)^{1/2}.
    \end{aligned}
  \end{equation}
  We can see that
  $$
  \begin{aligned}
    \int^\vc_{r_B^\alpha}\Big(\f{u}{t}\Big)^{1-\gamma}& \f{t}{u^{m+1}}\Big(\f{r_B}{u^{1/\alpha}}\Big)^{d-1/p'} \Big(1+\f{u^{1/\alpha}}{2^jr_B}\Big)^{1/2}\Big(1+\f{2^jr_B}{u^{1/\alpha}}\Big)^{-(d+\beta)}\f{du}{u}\\
                                                      &\lesi  \int^\vc_{r_B^\alpha}\Big(\f{u}{t}\Big)^{1-\gamma} \f{t}{u^{m+1}}\Big(\f{r_B}{u^{1/\alpha}}\Big)^{d-1/2} \Big(\f{u^{1/\alpha}}{r_B}\Big)^{1/2}\Big(\f{2^jr_B}{u^{1/\alpha}}\Big)^{-(d+\beta)}\f{du}{u}\\	
                                                      &\lesi 2^{-j(d+\beta)}\Big(\f{r_B^\alpha}{t}\Big)^{1-\gamma}\f{t}{r_B^{\alpha(m+1)}},
  \end{aligned}
  $$
  for all $m>\beta+1$. Plugging this into \eqref{eq-I12} we obtain
  \[
    \begin{aligned}
      E_{12}&\lesi  2^{-j(d+\beta)}\|f\|_{L^{p}(B,\f{dx}{|B|})}\Big(\int^{r_B^\alpha}_0\Big[\int_{[0,r_B^\alpha]^m}\Big(\f{r_B^\alpha}{t}\Big)^{1-\gamma}\f{t}{r_B^{\alpha(m+1)}} d\vec{s}\Big]^2\f{dt}{t}\Big)^{1/2}\\
            &\lesi  2^{-j(d+\beta)} \Big(\fint_B|f|^{p}dx\Big)^{1/p}.
    \end{aligned}
  \]
  Collecting the estimates of $E_{11}$ and $E_{12}$, we have
  $$
  E_1\lesi 2^{-j(d+\beta)} \Big(\fint_B|f|^{p}dx\Big)^{1/p}.
  $$
  
  We now estimate $E_2$. To do this, we bound
  $$
  \begin{aligned}
    E_2&\lesi \Big(\int^\vc_{r_B^\alpha}\Big[\int_{0}^{r_B^\alpha}\Big(\f{u}{t}\Big)^{1-\gamma}\Big\|t\La e^{-(t+u)\La}(I-e^{-r_B^\alpha\La})^mf\Big\|_{L^{2}(S_j(B),\f{dx}{|S_j(B)|})}\f{du}{u}\Big]^2\f{dt}{t}\Big)^{1/2}\\
       & \ \ + \Big(\int^\vc_{r_B^\alpha}\Big[\int_{r_B^\alpha}^\vc\Big(\f{u}{t}\Big)^{1-\gamma}\Big\|t\La e^{-(t+u)\La}(I-e^{-r_B^\alpha\La})^mf\Big\|_{L^{2}(S_j(B),\f{dx}{|S_j(B)|})}\f{du}{u}\Big]^2\f{dt}{t}\Big)^{1/2}\\
       &=:E_{21}+E_{22}.
  \end{aligned}
  $$
  Similarly to \eqref{eq-I12}, 
  $$
  \begin{aligned}
    E_{21}&\lesi  \Big(\int_{r_B^\alpha}^\vc\Big[\int_{[0,r_B^\alpha]^m}\int_0^{r_B^\alpha}\Big(\f{u}{t}\Big)^{1-\gamma} \f{t}{t^{m+1}}\Big(\f{r_B}{t^{1/\alpha}}\Big)^{d-1/p'}  \Big(1+\f{t^{1/\alpha}}{2^jr_B}\Big)^{1/2}\Big(1+\f{2^jr_B}{t^{1/\alpha}}\Big)^{-(d+\beta)}\\
          & \quad \quad \quad \quad \times  \|f \|_{L^{p}(B,\f{dx}{|B|})}\f{du}{u}d\vec{s}\Big]^2\f{dt}{t}\Big)^{1/2},
  \end{aligned}
  $$
  Thus,
  \[
    E_{21}\lesi    2^{-j(d+\beta)} \Big(\fint_B|f|^{p}dx\Big)^{1/p}
  \]
  for all $m>\beta+1$. By the same reason,
  \[
    \begin{aligned}
      E_{22}&\lesi  \Big(\int_{r_B^\alpha}^\vc\Big[\int_{[0,r_B^\alpha]^m}\int^\vc_{r_B^\alpha}\Big(\f{u}{t}\Big)^{1-\gamma} \f{t}{(t+u)^{m+1}}\Big(\f{r_B}{(t+u)^{1/\alpha}}\Big)^{d-1/p'}  \Big(1+\f{(t+u)^{1/\alpha}}{2^jr_B}\Big)^{1/2}\\
            & \quad \quad \quad \quad \times  \Big(1+\f{2^jr_B}{(t+u)^{1/\alpha}}\Big)^{-(d+\beta)}
              \|f \|_{L^{p}(B,\f{dx}{|B|})}\f{du}{u}d\vec{s}\Big]^2\f{dt}{t}\Big)^{1/2}.
    \end{aligned}
  \]
  In addition,
  \[
    \begin{aligned}
      \int^\vc_{r_B^\alpha}&\Big(\f{u}{t}\Big)^{1-\gamma} \f{t}{(t+u)^{m+1}}\Big(\f{r_B}{(t+u)^{1/\alpha}}\Big)^{d-1/p'}  \Big(1+\f{(t+u)^{1/\alpha}}{2^jr_B}\Big)^{1/2}
                             \Big(1+\f{2^jr_B}{(t+u)^{1/\alpha}}\Big)^{-(d+\beta)} \f{du}{u}\\
                           &\lesi \int^\vc_{r_B^\alpha}\Big(\f{u}{t}\Big)^{1-\gamma} \f{t}{(t+u)^{m+1}}\Big(\f{r_B}{(t+u)^{1/\alpha}}\Big)^{d-1/2}  \Big(\f{(t+u)^{1/\alpha}}{r_B}\Big)^{1/2}\Big(\f{2^jr_B}{(t+u)^{1/\alpha}}\Big)^{-(d+\beta)} \f{du}{u}\\
                           &\lesi 2^{-j(d+\beta)}\int^\vc_{r_B^\alpha}\Big(\f{u}{t}\Big)^{1-\gamma} \f{t}{(t+u)^{m+1}}\Big(\f{r_B}{(t+u)^{1/\alpha}}\Big)^{-\beta-1}   \f{du}{u}\\
                           &\lesi 2^{-j(d+\beta)}\int^\vc_{0}\Big(\f{u}{t}\Big)^{1-\gamma} \f{t r_B^{-\beta-1}}{(t+u)^{m-\beta }} \f{du}{u}
                             \lesi 2^{-j(d+\beta)} \f{t r_B^{-\beta-1}}{t^{m-\beta }}
    \end{aligned}
  \]
  for all $m>\beta + 1$. Consequently,
  \[
    E_{22}\lesi  2^{-j(d+\beta)} \Big(\fint_B|f|^{p}dx\Big)^{1/p}.
  \]
  Combining the estimates for $E_{21}, E_{22}$, and $E_1$ yields
  $$
  \Big(\int_{S_j(B)}|S_{\La,\gamma}(I-e^{-r_B^\alpha\La})^mf|^{2}dx\Big)^{1/2}\lesi 2^{-j(d+\beta)} \Big(\fint_B|f|^{p}dx\Big)^{1/p}.
  $$
  This completes the proof of  \eqref{eq1-thmSquareFunctions}.
  
  \bigskip

  \textbf{Step 2: The proof of the $L^p$-boundedness for $p>2$}

  \medskip
  We use Theorem \ref{thm2-Auscher} with $T=S_{L_\lambda,\gamma}$ and $\mathcal{A}_{r_B}=I-(I-e^{-r_B^\alpha\La})^m$, $m>d+\beta + \alpha(1-\gamma)$. Thus, for any $q\in (2,\vc)$ it suffices to prove
  \begin{eqnarray}
    \label{e1-SLa}
    \Big( \fint_{B} \left| S_{\La,\gamma}(I-\mathcal{A}_{r_B})f\right|^{2}dx\Big)^{1/2} \le
    C \mathcal{M}_{2}(f)(x)
  \end{eqnarray}
  and
  \begin{eqnarray}
    \label{e2-SLa}
    \Big( \fint_{B} \big| S_{\La,\gamma}\mathcal{A}_{r_B}f\big|^{q}dx\Big)^{1/q} \le
    C \mathcal{M}_{2}(S_{\La,\gamma}f)(x)
  \end{eqnarray}
  for all balls $B$ with radius $r_B$, all $f \in C^{\infty}_c(\mathbb{R}^d) $ and all $x\in B$.
  To prove \eqref{e1-SLa}, we write
  $$
  \begin{aligned}
    \begin{aligned}
      \Big(\fint_{B}|S_{\La,\gamma}(I-e^{-r_B^\alpha\La})^mf|^{2}dx\Big)^{\frac12}
      &\le \sum_{j\ge 0}\Big(\fint_{B}|S_{\La,\gamma}(I-e^{-r_B^\alpha\La})^mf_j|^{2}dx\Big)^{\frac12}=:\sum_{j=0}^\vc F_j,
    \end{aligned}
  \end{aligned}
  $$
  where $f_j=f\one_{S_j(B)}$. For $j=0,1$, we use the $L^2$-boundedness of $S_{\La,\gamma}$ and $\mathcal{A}_{r_B}$ and obtain
  $$
  F_j\lesi \mathcal{M}_{2}(f)(x).
  $$
  Hence, it suffices to prove
  \begin{equation}\label{eq- Fj}
    F_j\lesi 2^{-j\beta}\Big(\fint_{S_j(B)}|f|^{2}dx\Big)^{1/2}
  \end{equation}
  for all $j\ge 2$. To do this, we write
  \[
    \begin{aligned}
      & \Big(\fint_{B}|S_{\La,\gamma}(I-e^{-r_B^\alpha\La})^mf_j|^{2}dx\Big)^{1/2} \\
      & \quad \le  \Big(\int_0^{r_B^\alpha}\left\|(t\La)^{\gamma}e^{-t\La}(I-e^{-r_B^\alpha\La})^mf_j\right\|_{L^{2}(B,\f{dx}{|B|})}^2\f{dt}{t}\Big)^{1/2}\\
      & \qquad +\Big(\int_{r_B^\alpha}^\vc\left\|(t\La)^{\gamma}e^{-t\La}(I-e^{-r_B^\alpha\La})^mf_j\right\|_{L^{2}(B,\f{dx}{|B|})}^2\f{dt}{t}\Big)^{1/2}.
    \end{aligned}
  \]
  At this stage, we can argue as in the proof of \eqref{eq1-thmSquareFunctions} in Step 1. However, in this case, we use \eqref{eq2-Tt} instead of \eqref{eq1-Tt}. By doing so, we arrive at the expression \eqref{eq- Fj}. As the proof follows a similar structure, we omit the details.
		
  It remains to prove \eqref{e2-SLa}. We first write
  $$
  \begin{aligned}
    & \Big(\int_B |S_{\La,\gamma}\mathcal A_{r_B}f(x)|^{q}dx\Big)^{\frac1q}
      =\Big[\int_B \Big( \int_0^\vc \Big|\sum_{k=1}^m C^m_ke^{-kr_B^\alpha\La}(t\La)^{\gamma}e^{-t\La}f(x)\Big|^2\f{dt}{t} \Big)^{\frac q2}dx\Big]^{\frac1q}\\
    & \quad \lesi\sum_{j\ge 0}\Big[\int_B \Big( \int_0^\vc \Big|\sum_{k=1}^m C^m_ke^{-kr_B^\alpha\La}[(t\La)^{\gamma}e^{-t\La}f\one_{S_j(B)}](x)\Big|^2\f{dt}{t} \Big)^{q/2}dx\Big]^{\frac1q}
  \end{aligned}
  $$
  which, along with Minkowski's inequality, Proposition \ref{thm-ptk}, and \eqref{eq2-Tt} in Theorem \ref{thm-Tt}, gives
  $$
  \begin{aligned}
    \Big(\fint_B &|S_{\La,\gamma}\mathcal A_{r_B} f(x)|^{q}dx\Big)^{1/q}\\
                 &\lesi\sum_{j\ge 0}  \Big( \int_0^\vc \Big\|\sum_{k=1}^me^{-kr_B^\alpha\La}[(t\La)^{\gamma}e^{-t\La}f\one_{S_j(B)}]\Big\|_{L^{q}(B,\f{dx}{|B|})}^2\f{dt}{t} \Big)^{1/2}\\
		&\lesi\sum_{j\ge 0} 2^{-j\beta} \left( \int_0^\vc \left\|(t\La)^{\gamma}e^{-t\La}f\right\|_{L^2(S_j(B),\f{dx}{|S_j(B)|})}^2\f{dt}{t} \right)^{1/2}\\
                 &\lesi\sum_{j\ge 0} 2^{-j\beta} \Big(\fint_{2^jB} |S_{\La,\gamma}f(x)|^{2}dx\Big)^{1/2}.
  \end{aligned}
  $$
  This implies \eqref{e2-SLa}. Hence the proof of Step 2 is completed.

  \bigskip
  
  Thus we have proved that the square function $S_{\La,\gamma}$ is bounded on $L^p(\Rd_+)$ for all $1<p<\vc$, i.e.,
  \[
    \|S_{\La,\gamma}f\|_p\lesi \|f\|_p.
  \]
  The reverse inequality follows from this and duality. We provide the details. By functional calculus, for any $g\in L^{p'}(\Rd_+)$, we have
  $$
  \begin{aligned}
    \int_{\Rd_+} f(x)g(x)dx&=c(\gamma)\int_{\Rd_+} \int_0^\vc(t\La)^{2\gamma}e^{-2t\La}f(x)g(x)\f{dt}{t}dx,
  \end{aligned}
  $$
  where $c(\gamma)= \int_0^\vc t^{2\gamma}e^{-2t}\f{dt}{t}$. By H\"older's inequality, we obtain
  $$
  \begin{aligned}
    \left|\int_{\Rd_+} f(x)g(x)dx\right|
    &=c(\gamma) \left|\int_{\Rd_+} \int_0^\vc(t\La)^{\gamma}e^{-t\La}f(x)(t\La)^{\gamma}e^{-t\La}g(x)\f{dt}{t}dx\right| \\
    &\lesi \int_{\Rd_+}S_{\La,\gamma}f(x)S_{\La,\gamma}g(x) dx\\
    &\lesi\|S_{\La,\gamma}f\|_{p}\|S_{\La,\gamma}g\|_{p'}.
  \end{aligned}
  $$
  By the  upper bound $\|S_{\La,\gamma}g\|_{p'}\lesi \|g\|_{p'}$,
  $$
  \left|\int_{\Rd_+} f(x)g(x)dx\right| \lesi \|S_{\La,\gamma}f\|_{p}\|g\|_{p'},
  $$
  and consequently,
  $$
  \|f\|_{p}\lesi \|S_{\La,\gamma}f\|_{p}.
  $$
  This completes the proof of Theorem~\ref{squarefunctions}.	
\end{proof}

\section{Bounds for differences of kernels}
\label{s:newboundsdifferenceskernels}

In this section, we prepare the proof of the reversed Hardy inequality (Theorem~\ref{thm-difference}), which relies on Schur tests and bounds for differences of heat kernels and derivatives of heat kernels.

Let $\alpha\in (0,2]$ and $\sigma$ as in \eqref{eq-sigma}. Set $q:=\min\{\sigma,(\alpha-1)_+\}$. For each $\beta\in (0,\alpha)$, we define
\[
\begin{aligned}
	L^{\alpha,\beta}_t(x,y):=& \big(\one_{x_d\vee y_d\le t^{1/\alpha}} +\one_{x_d\vee y_d\ge t^{1/\alpha}}\one_{|x-y|\ge (x_d\wedge y_d)/2}\big)\Big(1\wedge \f{x_d}{t^{1/\alpha}}\Big)^{q}\Big(1\wedge \f{y_d}{t^{1/\alpha}}\Big)^{q}\\
	&\times t^{-d/\alpha}\Big[\Big( \f{t^{1/\alpha}}{t^{1/\alpha}+|x-y|}\Big)^{d+\beta}\one_{\alpha<2} + \exp\Big(-\f{|x-y|^2}{t}\Big)\one_{\alpha=2}\Big]
\end{aligned}
\]
and
\[
\begin{aligned}
	M^{\alpha,\beta}_t(x,y):=&  \one_{x_d\vee y_d\ge t^{1/\alpha}} \one_{|x-y|\le (x_d\wedge y_d)/2}\\
	&\times \f{t^{1-d/\alpha}}{(x_d\vee y_d)^\alpha}\Big[\Big( \f{t^{1/\alpha}}{t^{1/\alpha}+|x-y|}\Big)^{d+\beta}\one_{\alpha<2} + \exp\Big(-\f{|x-y|^2}{t}\Big)\one_{\alpha=2}\Big].
\end{aligned}
\]

These two functions were used in \cite[Theorem~17]{FrankMerz2023} in the proof of the upper bound for the kernel of the difference $e^{-t\La} -e^{-tL_0}$. In this section, we will prove new bounds for the difference $tL_0 e^{-tL_0}- t\La e^{-t\La}$, see Proposition~\ref{prop-difference} below. We will often use the following auxiliary bound.

\begin{lem}\label{lem- composition of two kernels}
  Let $N\in\N$. Then for all $\beta\in (0,2]$ and all $s,t>0$ we have
  \[
    \int_{\R^N} \f{(st)^\beta}{(s+|x-z|)^{N+\beta}(t+|z-y|)^{N+\beta}}dz \simeq \f{(s+t)^{\beta}}{[(s+t)+|x-y|]^{N+\beta}}
  \]
  for all $x,y\in \R^N$.
\end{lem}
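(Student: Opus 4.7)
The plan is to read the displayed formula as the standard Chapman--Kolmogorov-type bound $p_t * p_s \simeq p_{s+t}$ for the Poisson-like kernels $p_u(w) := u^\beta(u+|w|)^{-N-\beta}$ on $\R^N$; one of the two factors $(s+|\cdot|)^{N+\beta}$ in the denominator appears to be a typo for $(t+|\cdot|)^{N+\beta}$, so that the integrand factorizes as $p_t(x-z)\, p_s(z-y)$. The approach is elementary: scaling, followed by case analysis.

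First, I would reduce to a normalized configuration. Under the substitution $z = \lambda z' + y$ together with the rescaling $x = \lambda x' + y$, both sides of the claimed equivalence scale by $\lambda^{-N}$, so I may assume $s+t = 1$. Using the $(s,x) \leftrightarrow (t,y)$ symmetry of the integrand I may further assume $s \le t$, so that $t \in [1/2, 1]$ and hence $t \simeq 1 \simeq s+t$.

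For the upper bound, I would localize the integration to three regions around $y$: $R_1 := \{|z-y| \le s\}$, $R_2 := \{s < |z-y| \le 1\}$, and $R_3 := \{|z-y| > 1\}$. On each region, the elementary pointwise estimates $p_u(w) \simeq u^{-N}$ for $|w| \le u$ and $p_u(w) \simeq u^\beta|w|^{-N-\beta}$ for $|w| \ge u$ reduce the contribution to an elementary integral. Summing these contributions gives the desired upper bound $\lesi p_{s+t}(x-y)$, where the subcases $|x-y| \lesi 1$ and $|x-y| \gtrsim 1$ need to be checked separately.

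For the matching lower bound, it suffices to restrict the integral to $R_1 = \{|z-y| \le s/2\}$. There $p_s(z-y) \simeq s^{-N}$ pointwise, while $p_t(x-z) \gtrsim p_t(x-y)$, since $t \simeq 1$ and $|x-z|$ differs from $|x-y|$ by at most $s/2 \le 1/2$. Multiplying by the volume $|R_1| \simeq s^N$ produces a lower bound of $\gtrsim p_t(x-y) \simeq p_{s+t}(x-y)$, as required. The main obstacle is the bookkeeping in the upper bound in the transition regime $|x-y| \simeq 1$; the key analytic input making everything uniform in the parameters is the scale invariance $\|p_u\|_{L^1(\R^N)} = C_{N,\beta}$ (independent of $u > 0$), obtained by the substitution $w = u w'$.
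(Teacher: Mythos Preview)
Your reading of the typo is exactly right, and the paper's own proof confirms it: the intended integrand is $p_s(x-z)\,p_t(z-y)$ with $p_u(w)=u^\beta(u+|w|)^{-N-\beta}$. Your elementary plan (scale to $s+t=1$, assume $s\le t$, then split the $z$-integral into shells around $y$ for the upper bound and restrict to $\{|z-y|\le s/2\}$ for the lower bound) is sound and will go through with routine bookkeeping.

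The paper takes a genuinely different and much shorter route. It observes that $p_u(w)$ is, up to constants, the two-sided bound for the heat kernel of the $\beta$-stable operator,
\[
  e^{-u^\beta(-\Delta_{\R^N})^{\beta/2}}(x,y)\ \simeq\ \frac{u^\beta}{(u+|x-y|)^{N+\beta}},
\]
and then simply invokes the semigroup property $e^{-s^\beta A}\ast e^{-t^\beta A}=e^{-(s^\beta+t^\beta)A}$ together with $(s^\beta+t^\beta)^{1/\beta}\simeq s+t$. This buys a one-line proof at the cost of importing the (standard but nontrivial) stable heat kernel estimates; strictly speaking that argument only covers $\beta\in(0,2)$, since for $\beta=2$ the Gaussian does not obey the polynomial two-sided bound. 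Your direct argument is more laborious but is self-contained, works uniformly for all $\beta>0$ (in particular it cleanly covers the endpoint $\beta=2$ in the stated range), and makes the role of the scale invariance $\|p_u\|_{L^1}=C_{N,\beta}$ explicit. Either approach is acceptable here; in the paper's applications one always has $\beta<\alpha\le 2$, so the stable-semigroup shortcut suffices.
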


\begin{proof}
  The proof of the inequality ``$\lesi$'' was given in \cite[Lemma~22]{FrankMerz2023}. Here, we prove ``$\simeq$'' directly. It is well-known that 
  \[
    e^{-t(-\Delta_{\R^N})^{\beta/2}}(x,y)\simeq \f{t}{(t^{1/\beta}+|x-y|)^{N+\beta}}.
  \]
  Hence,
  \[
    \begin{aligned}
      \int_{\R^N} \f{(st)^\beta}{(s+|x-z|)^{N+\beta}(t+|z-y|)^{N+\beta}}dz
      &\simeq \int_{\R^N} e^{-s^\beta(-\Delta_{\R^N})^{\beta/2}}(x,z)e^{-t^\beta(-\Delta_{\R^N})^{\beta/2}}(z,y)dz\\
      &= e^{-(s^\beta+ t^\beta)(-\Delta_{\R^N})^{\beta/2}}(x,y)\\
      &\simeq \f{(s+t)^{\beta}}{[(s+t)+|x-y|]^{N+\beta}}
    \end{aligned}
  \]
  for all $s,t>0$ and $x,y\in \R^N$.
\end{proof}

The following technical lemma is inspired from \cite{FrankMerz2023}.

\begin{lem}\label{lem- difference for alpha < 2}
  Let $\alpha\in (0,2)$, $\min\{0,\alpha-1\}<\sigma<\alpha$, and $\beta\in ((\alpha-1)_+,\alpha)$. For $t>0$ and $x,y\in\R_+^d$, let
  \begin{equation*}
    T^{\alpha,\beta}_t(x,y) := \Big(1\wedge\f{x_d}{t^{1/\alpha}}\Big)^{\sigma}\Big(1\wedge\f{y_d}{t^{1/\alpha}}\Big)^{\sigma}t^{-d/\alpha}\Big(\f{t^{1/\alpha}}{t^{1/\alpha}+|x-y|}\Big)^{d+\beta} ,
  \end{equation*}
  and
  \[
    H^{\alpha,\beta}_t(x,y) := \Big(1\wedge\f{x_d}{t^{1/\alpha}}\Big)^{(\alpha-1)_+}\Big(1\wedge\f{y_d}{t^{1/\alpha}}\Big)^{(\alpha-1)_+}t^{-d/\alpha}\Big(\f{t^{1/\alpha}}{t^{1/\alpha}+|x-y|}\Big)^{d+\beta}.
  \]	
  Then there exists $C>0$ such that
  \begin{align}
    \label{eq:lem- difference for alpha < 2}
    \begin{split}
      U^{\alpha,\beta}_t(x,y)
      &:=\one_{x_d\vee y_d\ge t^{1/\alpha}} \one_{|x-y|\le (x_d\wedge y_d)/2}\int_{\Rd_+}\int_0^t H^{\alpha,\beta}_{t-s}(x,z)z_d^{-\alpha}T^{\alpha,\beta}_{s}(z,y)ds dz\\
      &\le C M^{\alpha,\beta}_t(x,y)
    \end{split}
  \end{align}
  for all $x,y\in \Rd_+$ and $t>0$.
\end{lem}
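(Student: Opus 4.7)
The plan is to bound $U^{\alpha,\beta}_t(x,y)$ by $C M^{\alpha,\beta}_t(x,y)$ by splitting the $z$-integration into two regimes. Begin with preliminary reductions on the indicator region. The conditions $x_d\vee y_d \ge t^{1/\alpha}$ and $|x-y| \le (x_d\wedge y_d)/2$ imply $|x_d - y_d| \le (x_d\wedge y_d)/2$, hence $x_d \simeq y_d \simeq x_d\vee y_d =: A$ with $A \gtrsim t^{1/\alpha}$. Since $A \ge (t-s)^{1/\alpha} \vee s^{1/\alpha}$ for all $s\in(0,t)$, the truncation factors in $H$ and $T$ that involve $x_d$ or $y_d$ are comparable to $1$, and we may work with
\[
H^{\alpha,\beta}_{t-s}(x,z) \lesi \Big(1\wedge\f{z_d}{(t-s)^{1/\alpha}}\Big)^{(\alpha-1)_+} \f{(t-s)^{\beta/\alpha}}{((t-s)^{1/\alpha}+|x-z|)^{d+\beta}},
\]
and analogously for $T^{\alpha,\beta}_s(z,y)$ (with exponent $\si$). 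The target also simplifies to $M^{\alpha,\beta}_t(x,y) \simeq \f{t^{1-d/\alpha}}{A^\alpha}\Big(\f{t^{1/\alpha}}{t^{1/\alpha}+|x-y|}\Big)^{d+\beta}$.

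Next, decompose the $z$-integration into Region I where $z_d \ge A/4$, and Region II where $z_d < A/4$. In Region I the potential satisfies $z_d^{-\alpha} \lesi A^{-\alpha}$ and the $z_d$-truncations are also $\simeq 1$; extending the $z$-integration to $\Rd$ and applying Lemma~\ref{lem- composition of two kernels} with parameters $(t-s)^{1/\alpha}, s^{1/\alpha}$ (whose sum is $\simeq t^{1/\alpha}$) yields
\[
\int_{\Rd} H^{\alpha,\beta}_{t-s}(x,z)\, T^{\alpha,\beta}_s(z,y)\, dz \lesi \f{t^{\beta/\alpha}}{(t^{1/\alpha}+|x-y|)^{d+\beta}}.
\]
Multiplying by $A^{-\alpha}$ and integrating over $s\in(0,t)$ supplies the extra factor $t$ needed to reproduce $M^{\alpha,\beta}_t(x,y)$.

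In Region II, $z_d < A/4$ forces $|x-z|, |z-y| \gtrsim A$; writing $z=(z',z_d)$ with $|x_d-z_d|, |z_d-y_d| \simeq A$ gives $|x-z| \simeq A + |x'-z'|$ and $|z-y| \simeq A + |z'-y'|$. A direct computation (splitting $|x'-z'| \lessgtr A$, using $|x'-y'| \le A/2$) yields
\[
\int_{\R^{d-1}} \f{dz'}{(A+|x'-z'|)^{d+\beta}(A+|z'-y'|)^{d+\beta}} \lesi A^{-d-2\beta-1},
\]
which reduces the task to estimating
\[
A^{-d-2\beta-1} \int_0^t (t-s)^{\beta/\alpha} s^{\beta/\alpha} \int_0^{A/4} \Big(1\wedge\f{z_d}{(t-s)^{1/\alpha}}\Big)^{(\alpha-1)_+} \Big(1\wedge\f{z_d}{s^{1/\alpha}}\Big)^\si z_d^{-\alpha}\, dz_d\, ds.
\]
The inner integral is computed by splitting at the two truncation scales $s^{1/\alpha}$ and $(t-s)^{1/\alpha}$; the hypothesis $\si > \min\{0,\alpha-1\}$ gives $(\alpha-1)_+ + \si > \alpha-1$, which is precisely the condition for integrability of $z_d^{(\alpha-1)_+ + \si - \alpha}$ at the origin. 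A short case analysis bounds it by $A^{1-\alpha}$ when $\alpha<1$ and by $[(t-s)\wedge s]^{(1-\alpha)/\alpha}$ (up to a logarithmic factor) when $\alpha\ge 1$.

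Finally, performing the $s$-integration and comparing the outcome with $M^{\alpha,\beta}_t(x,y)$ at its smallest value $|x-y| \simeq A/2$, the ratio reduces to $(A/t^{1/\alpha})^{-\beta}$ for $\alpha<1$ and $(A/t^{1/\alpha})^{\alpha-\beta-1}$ for $\alpha\ge 1$; both are bounded by $1$ since $A \ge t^{1/\alpha}$ and $\beta > (\alpha-1)_+$. I expect the main technical difficulty to lie in this Region II calculation — specifically, tracking the three subranges of $z_d$ relative to the two truncation scales, and absorbing the logarithmic factor arising in the range $s^{1/\alpha} \le z_d \le (t-s)^{1/\alpha}$ when $\alpha \ge 1$; this is exactly where the strict inequality $\beta > (\alpha-1)_+$ is needed.
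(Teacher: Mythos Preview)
Your proposal is correct and follows essentially the same strategy as the paper: the same preliminary reduction $x_d\simeq y_d\simeq A\gtrsim t^{1/\alpha}$, the same split of the $z$-integral at $z_d\simeq A$, the same use of Lemma~\ref{lem- composition of two kernels} in the far region, and the same key observation $|x-z|\simeq A+|x'-z'|$, $|z-y|\simeq A+|z'-y'|$ in the near region. The only differences are cosmetic: the paper normalizes $t=1$, keeps the sharper factor $(1+|x-y|)^{-(d+\beta)}$ after the $z'$-integration rather than your cruder $A^{-d-2\beta-1}$, and---more notably---reverses your order of the remaining integrations, doing $s$ first and then $z_d$; this yields the clean intermediate bound $\int_0^1(\dots)\,ds\lesssim(1\wedge z_d)^{\sigma+(\alpha-1)_+}$ and avoids the three-range $z_d$-split and the logarithmic factors you anticipate.
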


Note that $\sigma\ge(\alpha-1)/2$ implies $\sigma>\min\{0,\alpha-1\}$ when $\alpha\neq1$.

\begin{proof}
  Without loss of generality, we assume $y_d\ge x_d$ and $t=1$. Set 
  \[
    S:=\{(x,y)\in\R_+^d\times\R_+^d: x_d\vee y_d\ge 1, |x-y|\le (x_d\wedge y_d)/2\}.
  \]
  Since $|x_d-y_d|\le |x-y|\le (x_d\wedge y_d)/2$, we have $x_d\simeq y_d$. Hence, $x_d\simeq y_d\gtrsim 1$ for $(x,y)\in S$. We now write
  \[
    \begin{aligned}
      U^{\alpha,\beta}_t(x,y)
      = &\one_{S}(x,y) \int_{\R^{d-1}}\int_0^{x_d/2}\int_0^1 H^{\alpha,\beta}_{1-s}(x,z)z_d^{-\alpha}T^{\alpha,\beta}_{s}(z,y)ds dz\\
            &+\one_{S}(x,y) \int_{\R^{d-1}}\int_{x_d/2}^{\vc}\int_0^1 H^{\alpha,\beta}_{1-s}(x,z)z_d^{-\alpha}T^{\alpha,\beta}_{s}(z,y)ds dz\\
      =:&E_1 +E_2.
    \end{aligned}
  \]
  For the term $E_2$, since $z_d\ge x_d/2$ and $x_d\simeq y_d \gtrsim1$, by Lemma \ref{lem- composition of two kernels}, we have
  \[
    \begin{aligned}
      E_2&\lesi \f{1}{x_d^\alpha} \int_0^1 \int_{\Rd_+}(1-s)^{-d/\alpha}\Big(\f{(1-s)^{1/\alpha}}{(1-s)^{1/\alpha}+|x-z|}\Big)^{d+\beta}s^{-d/\alpha}\Big(\f{s^{1/\alpha}}{s^{1/\alpha}+|z-y|}\Big)^{d+\beta}dz ds\\
         &\lesi \f{1}{x_d^\alpha} \int_0^1  \Big(\f{1}{1+|x-y|}\Big)^{d+\beta} ds
           \lesi \f{1}{x_d^\alpha} \Big(\f{1}{1+|x-y|}\Big)^{d+\beta}
           \lesi M_1^{\alpha,\beta}(x,y).
    \end{aligned}
  \]	
  For the term $E_1$, we note that since $y_d\ge x_d \ge 2z_d$, we have $|y_d-z_d|\simeq y_d\simeq x_d \simeq |x_d-z_d|$. Therefore, in this case for $s\in (0,1)$,
  \[
    (1-s)^{1/\alpha}+|x-z|\simeq x_d + |x'-z'| \ \ \text{and} \ \ s^{1/\alpha}+|y-z|\simeq x_d + |y'-z'|. 
  \]
  This and Lemma \ref{lem- composition of two kernels} imply
  \[
    \begin{aligned}
      & \int_{\R^{d-1}} \f{(1-s)^{\beta/\alpha}}{((1-s)^{1/\alpha}+|x-z|)^{d+\beta}}
      \f{s^{\beta/\alpha}}{(s^{1/\alpha}+|x-z|)^{d+\beta}}dz'\\
      & \quad \simeq \int_{\R^{d-1}} \f{(1-s)^{\beta/\alpha}}{(x_d+|x'-z'|)^{d+\beta}}\f{s^{\beta/\alpha}}{(x_d+|y'-z'|)^{d+\beta}}dz'\\
      & \quad \lesi \f{x_d^{-\beta-1} s^{\beta/\alpha}(1-s)^{\beta/\alpha}}{(x_d+|x'-y'|)^{d+\beta}}
        \simeq \f{x_d^{-\beta-1}(1-s)^{\beta/\alpha}s^{\beta/\alpha}}{(1+|x-y|)^{d+\beta}}.
    \end{aligned}
  \]
  It follows that 
  \[
    \begin{aligned}
      E_1&\lesi \f{x_d^{-\beta-1}}{(1+|x-y|)^{d+\beta}}\int_0^{x_d/2}\int_0^{1}z_d^{-\alpha} (1-s)^{\beta/\alpha}s^{\frac\beta\alpha}\Big(1\wedge\f{z_d}{(1-s)^{\frac1\alpha}}\Big)^{(\alpha-1)_+}\Big(1\wedge\f{z_d}{s^{1/\alpha}}\Big)^{\sigma}dsdz_d.
    \end{aligned}
  \]
  By a simple calculation,
  \[
    \begin{aligned}
      & \int_0^{1}  (1-s)^{\beta/\alpha} s^{\beta/\alpha}\Big(1\wedge\f{z_d}{(1-s)^{1/\alpha}}\Big)^{(\alpha-1)_+}\Big(1\wedge\f{z_d}{s^{1/\alpha}}\Big)^{\sigma}ds
        = \int_0^{1/2}\ldots + \int_{1/2}^1\ldots \\
      & \quad \lesi \Big(1\wedge z_d\Big)^{\sigma+(\alpha-1)_+}.
    \end{aligned}
  \]
	
  Consequently,
  \[
    \begin{aligned}
      E_{1}&\lesi \f{x_d^{-\beta-1}}{(1+|x-y|)^{d+\beta}}\int_0^{x_d/2} z_d^{-\alpha} \Big(1\wedge z_d\Big)^{\sigma+(\alpha-1)_+} dz_d\\
           &\lesi \f{1}{x_d^{\beta +1}}\f{1}{(1+|x-y|)^{d+\beta}}\int_0^{x_d/2} z_d^{-\alpha}\Big(1\wedge z_d\Big)^{\sigma+(\alpha-1)_+} dz_d\\
           &= \f{1}{x_d^{\beta +1}}\f{1}{(1+|x-y|)^{d+\beta}}\Big[x_d^{-\alpha+1}\cdot \one_{\alpha<1}+\ln(x_d)\cdot \one_{\alpha=1}+ \one_{\alpha>1}\Big].
    \end{aligned}
  \]
  Since $x_d\gtrsim 1$, we use the fact $\ln(x_d)\le c_\epsilon x_d^\epsilon$ for every $\epsilon>0$ to further obtain
  \[
    \begin{aligned}
      E_{1} &\lesi   \f{1}{(1+|x-y|)^{d+\beta}}\Big[\Big(\f{1}{x_d}\Big)^{\beta+\alpha}\cdot \one_{\alpha<1}+\Big(\f{1}{x_d}\Big)^{\beta+\alpha-\epsilon}\cdot \one_{\alpha=1}+\Big(\f{1}{x_d}\Big)^{\beta+1}\cdot \one_{\alpha>1}\Big].
    \end{aligned}
  \]
  This completes our proof.
\end{proof}

\begin{lem}\label{lem- difference for alpha = 2}
  Let $\alpha=2$ and $\sigma$ as in \eqref{eq-sigma}. For $x,y\in\R_+^d$ and $t>0$, let
  \begin{equation*}
    T_t(x,y) := \Big(1\wedge\f{x_d}{\sqrt t }\Big)^{\sigma}\Big(1\wedge\f{y_d}{\sqrt t}\Big)^{\sigma}t^{-d/2}\exp\Big(-\f{|x-y|^2}{t}\Big)
  \end{equation*}
  and
  \[
    H_t(x,y) := \Big(1\wedge\f{x_d}{\sqrt t}\Big)\Big(1\wedge\f{y_d}{\sqrt t}\Big)t^{-d/2}\exp\Big(-\f{|x-y|^2}{t}\Big).
  \]  
  Then there exist $C,c>0$ such that
  \[
    \begin{aligned}
      U_t(x,y)&:=\one_{x_d\vee y_d\ge \sqrt t} \one_{|x-y|\le (x_d\wedge y_d)/2}\int_{\Rd_+}\int_0^t H_{t-s}(x,z)z_d^{-\alpha}T_{s}(z,y)ds dz\\
              &\le C M^{\alpha,\beta}_{ct}(x,y)
    \end{aligned}
  \]
  for all $x,y\in \Rd_+$ and $t>0$.
\end{lem}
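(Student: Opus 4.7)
I would mirror the proof of Lemma~\ref{lem- difference for alpha < 2}, replacing the polynomial kernel composition (Lemma~\ref{lem- composition of two kernels}) by the exact Gaussian semigroup identity and exploiting the bound $e^{-a}\le C_N a^{-N}$ to trade Gaussian decay for polynomial decay in $x_d$. By the scaling $(x,y,t)\mapsto(x/\sqrt t, y/\sqrt t,1)$ it suffices to handle $t=1$; WLOG $y_d\ge x_d$. On $\supp U_1$, the conditions $x_d\vee y_d\ge 1$ and $|x-y|\le x_d/2$ force $x_d\simeq y_d\gtrsim 1$, hence $(1\wedge x_d/\sqrt{1-s})=(1\wedge y_d/\sqrt s)^{\sigma}=1$ for $s\in(0,1)$, and consequently
\[
H_{1-s}(x,z)T_s(z,y)\lesi (1-s)^{-d/2}\Big(1\wedge\f{z_d}{\sqrt{1-s}}\Big)s^{-d/2}\Big(1\wedge\f{z_d}{\sqrt s}\Big)^{\sigma}e^{-|x-z|^2/(1-s)}e^{-|z-y|^2/s}.
\]
I would then split $U_1\lesi E_1+E_2$ according to $z_d<x_d/2$ and $z_d\ge x_d/2$.

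The easier piece $E_2$ uses $z_d\ge x_d/2\gtrsim 1$, so every $(1\wedge\cdot)$ factor is $\le 1$ and $z_d^{-2}\lesi x_d^{-2}$. Extending the $z_d$-integration to $\R$ and applying the Gaussian semigroup identity $\int_{\R^d}(1-s)^{-d/2}e^{-|x-z|^2/(1-s)}s^{-d/2}e^{-|z-y|^2/s}\,dz\simeq e^{-|x-y|^2}$ yields $E_2\lesi x_d^{-2}e^{-|x-y|^2}\lesi M^{2,\beta}_1(x,y)$.

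For $E_1$ I would bound $(1\wedge z_d/\sqrt{1-s})\le z_d/\sqrt{1-s}$ and $(1\wedge z_d/\sqrt s)^{\sigma}\le (z_d/\sqrt s)^{\sigma}$ (legitimate since $\sigma\ge 1/2$), evaluate the $z'\in\R^{d-1}$ integral by the Gaussian semigroup to produce a factor $\simeq e^{-|x'-y'|^2}$, and complete the square in $z_d$:
\[
\f{(x_d-z_d)^2}{1-s}+\f{(y_d-z_d)^2}{s}=\f{(z_d-\bar z_d)^2}{s(1-s)}+(x_d-y_d)^2,\quad \bar z_d:=sx_d+(1-s)y_d.
\]
Since $\bar z_d\ge x_d>2z_d$, one has $(z_d-\bar z_d)^2\ge x_d^2/4$, so the $z_d$-Gaussian carries the extra decay $e^{-x_d^2/(4s(1-s))}$ and the leftover $(x_d-y_d)^2$ assembles with $|x'-y'|^2$ to give $e^{-|x-y|^2}$. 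After integrating $z_d^{-1+\sigma}$ on $(0,x_d/2)$ (which is $\lesi x_d^{\sigma}$, since $\sigma>0$), the remaining $s$-integral is
\[
\int_0^1(1-s)^{-1}s^{-1/2-\sigma/2}e^{-x_d^2/(4s(1-s))}\,ds.
\]
Applying $e^{-a}\le C_N a^{-N}$ with $N=\sigma/2+1$ bounds this by $x_d^{-2N}\int_0^1 s^{N-1/2-\sigma/2}(1-s)^{N-1}\,ds$, a finite Beta integral. Multiplying by $x_d^{\sigma}$ gives $E_1\lesi x_d^{\sigma-2N}e^{-|x-y|^2}=x_d^{-2}e^{-|x-y|^2}\lesi M^{2,\beta}_1(x,y)$.

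The main obstacle is the pair of non-integrable singularities $(1-s)^{-1}$ and, for $\sigma\ge 1$, $s^{-1/2-\sigma/2}$ in the $s$-integral of $E_1$. The saving move is to trade the Gaussian $e^{-x_d^2/(4s(1-s))}$ for an arbitrarily high polynomial $x_d^{-2N}$: the accompanying factor $(s(1-s))^N$ regularizes both endpoints of the $s$-integration, and the tuning $N=\sigma/2+1$ produces exactly the $x_d^{-2}$ weight that matches $M^{2,\beta}_1$. The constant $c$ in the statement may be taken to be any $c\ge 1$.
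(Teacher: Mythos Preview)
Your argument is correct and is essentially the natural execution of what the paper only sketches (``similar to Lemma~\ref{lem- difference for alpha < 2}, using Gaussian composition instead of Lemma~\ref{lem- composition of two kernels}''). The split $E_1+E_2$, the Gaussian semigroup identity for $E_2$, and the treatment of $E_1$ via completing the square in $z_d$ together with $e^{-a}\le C_N a^{-N}$ are exactly the Gaussian-side replacements for the polynomial kernel manipulations in Lemma~\ref{lem- difference for alpha < 2}; in particular the choice $N=\sigma/2+1$ that regularizes both endpoints of the $s$-integral while producing the correct power $x_d^{-2}$ is the right bookkeeping.
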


\begin{proof}
  The proof is similar to that of Lemma \ref{lem- difference for alpha < 2}. The only difference is that instead of Lemma \ref{lem- composition of two kernels}, we use that for $N\ge 1$ and $c_1, c_2>0$, there exist $C, c>0$ such that for all $s,t>0$ and $x,y\in \R^N$, we have
  $$
  \int_{\R^N}\f{1}{(st)^{N/2}}\exp\Big(-\f{|x-z|^2}{c_1s}\Big) \exp\Big(-\f{|z-y|^2}{c_2t}\Big)dz\le C\f{1}{(s+t)^{N/2}}\exp\Big(-\f{|x-y|^2}{c (s+t)}\Big).
  $$
  Hence, we omit the details.
\end{proof}

We now prove new bounds for the integral kernel $Q_t(x,y)$ of the difference $tL_0 e^{-tL_0}- t\La e^{-t\La}$. These bounds will be instrumental to prove the reversed Hardy inequality expressed in terms of the square functions in \eqref{eq:squarefunctions2}.

\begin{prop}
  \label{prop-difference}
  Let $\alpha\in (0,2]$ and let $\lambda \ge 0$ when $\alpha<2$ and $\lambda\ge -1/4$ when $\alpha=2$. Let  $\sigma$ be defined by \eqref{eq-sigma}. Then for any $\beta\in (0,\alpha)$, there exists $C>0$ such that for all $x,y\in \Rd$ and $t>0$,
  \begin{equation}
    \label{eq1-DifferenceKernels}
    |Q_t(x,y)|\le C \big[ L^{\alpha,\beta}_t(x,y)+M^{\alpha,\beta}_t(x,y)\big].
  \end{equation}
\end{prop}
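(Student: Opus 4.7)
The plan is to deduce \eqref{eq1-DifferenceKernels} from the corresponding bound on the difference of the heat kernels themselves,
\[
|p^0_t(x,y) - p^\lambda_t(x,y)| \lesi L_t^{\alpha,\beta}(x,y) + M_t^{\alpha,\beta}(x,y),
\]
by reading off $Q_t = -t\,\partial_t(p_t^0 - p_t^\lambda)$ through Cauchy's integral formula, essentially mimicking the passage from Proposition~\ref{heatkernelestimates-halfplane} to Proposition~\ref{thm-ptk}. Concretely, since $z\mapsto p_z^0(x,y)-p_z^\lambda(x,y)$ is holomorphic in the right half-plane, one writes
\[
Q_t(x,y) \;=\; -\frac{t}{2\pi i}\oint_{|\zeta-t|=\eta t}\frac{p^0_\zeta(x,y)-p^\lambda_\zeta(x,y)}{(\zeta-t)^2}\,d\zeta,
\]
with $\eta>0$ small enough that the contour lies in a sector on which the complex-time version of the difference bound is available.

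The first task is to prove the real-time difference bound. Starting from Duhamel's formula
\[
p_t^0(x,y)-p_t^\lambda(x,y) \;=\; \lambda\int_0^t\!\int_{\R_+^d} p_{t-s}^0(x,z)\,z_d^{-\alpha}\,p_s^\lambda(z,y)\,dz\,ds
\]
and inserting the pointwise bounds $p_{t-s}^0(x,z)\lesi H^{\alpha,\beta}_{t-s}(x,z)$ and $p_s^\lambda(z,y)\lesi T^{\alpha,\beta}_s(z,y)$ from Theorem~\ref{thm-heatkernelLa- alpha < 2}, one reduces to estimating an iterated convolution of $H$ and $T$ against $z_d^{-\alpha}$. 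In the ``easy'' region $\{x_d\vee y_d\le t^{1/\alpha}\}\cup\{|x-y|\ge(x_d\wedge y_d)/2\}$, a direct computation using Lemma~\ref{lem- composition of two kernels} gives the $L_t^{\alpha,\beta}$ contribution. In the complementary ``hard'' region where $x_d\vee y_d\ge t^{1/\alpha}$ and $|x-y|\le(x_d\wedge y_d)/2$, the $M_t^{\alpha,\beta}$ contribution is precisely the content of Lemma~\ref{lem- difference for alpha < 2} for $\alpha<2$ and Lemma~\ref{lem- difference for alpha = 2} for $\alpha=2$. This step essentially reproduces \cite[Theorem~17]{FrankMerz2023}.

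The second task is to promote the bound to complex times $\zeta\in\mathbb{C}_{\epsilon\pi/4}$. The cleanest route is to re-run the Duhamel-based argument along the segment $[0,\zeta]$ instead of $[0,t]$, bounding each heat kernel $p^0_{(1-r)\zeta}$ and $p^\lambda_{r\zeta}$ by the complex-time estimates from Proposition~\ref{heatkernelestimates-halfplane}, and then invoking Lemmas~\ref{lem- difference for alpha < 2}--\ref{lem- difference for alpha = 2} with $t$ replaced by $|\zeta|$; note that the proofs of those lemmas depend only on spatial integration and on the scaling of $|x-y|/|\zeta|^{1/\alpha}$, both of which survive the substitution $s\to r\zeta$. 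On the contour $|\zeta-t|=\eta t$ we have $|\zeta|\simeq t$, so $L^{\alpha,\beta}_{|\zeta|}\simeq L^{\alpha,\beta}_t$ and $M^{\alpha,\beta}_{|\zeta|}\simeq M^{\alpha,\beta}_t$, and the Cauchy integral produces a factor $t\cdot\oint |d\zeta|/|\zeta-t|^2\simeq 1$, yielding \eqref{eq1-DifferenceKernels}.

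The main obstacle lies in Step~2: the functions $L^{\alpha,\beta}$ and $M^{\alpha,\beta}$ are not of the product-factorised form to which Davies's Phragm\'en--Lindel\"of argument applies directly, so one cannot simply mimic Proposition~\ref{heatkernelestimates-halfplane} on the difference kernel. The Duhamel-along-contour workaround resolves this, but at the cost of an additional $\epsilon$-loss in the polynomial decay exponent $\beta$ in the $\alpha<2$ case (since Proposition~\ref{heatkernelestimates-halfplane} itself loses an $\epsilon$); the statement's freedom to choose any $\beta\in(0,\alpha)$ accommodates this. The $\alpha=2$ case requires the additional check that the Gaussian factor $\exp(-|x-y|^2/(ct))$ in $L_t^{2,\beta}$ and $M_t^{2,\beta}$ survives the complex substitution $t\to\zeta$ with a possibly larger constant $c$, which is routine given $\mathrm{Re}(1/\zeta)\gtrsim 1/|\zeta|$ on $\mathbb{C}_{\pi/4}$.
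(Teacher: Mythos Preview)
Your approach is viable, but the paper takes a more direct route that avoids the complex-time extension you flag as the main obstacle.

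In the paper's proof, the ``easy'' region is handled by the triangle inequality applied to $Q_t$ itself: since Proposition~\ref{thm-ptk} already gives $t|\widetilde p_{t,1}|\lesi H_t^{\alpha,\beta}$ and $t|p_{t,1}|\lesi T_t^{\alpha,\beta}$ (the Cauchy-formula step having been done once and for all at the level of the individual heat kernels), one simply observes $H_t^{\alpha,\beta}+T_t^{\alpha,\beta}\lesi L_t^{\alpha,\beta}$ on the support of $L_t^{\alpha,\beta}$. For the ``hard'' region, rather than passing to complex time, the paper differentiates Duhamel's formula directly in the real variable $t$ (after a symmetric split at $t/2$), obtaining three terms $I_1,I_2,I_3$, each of the shape $t\int\!\!\int \widetilde p_{t-s}^{(*)}(x,z)\,z_d^{-\alpha}\,p_s^{(*)}(z,y)\,dz\,ds$ with exactly one factor carrying a time-derivative. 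Because Proposition~\ref{thm-ptk} bounds the derivative kernels by the same $H,T$ profiles as the undifferentiated ones, each $I_j$ is dominated by the integral $U_t^{\alpha,\beta}$ of Lemma~\ref{lem- difference for alpha < 2}, yielding $M_t^{\alpha,\beta}$ directly.

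Your detour through complex $\zeta$ works in principle, but it duplicates effort (Cauchy's formula is already built into Proposition~\ref{thm-ptk}) and forces you to re-run the Duhamel estimate along a complex segment---extra bookkeeping that the paper sidesteps by differentiating in real $t$. The direct approach is shorter and also explains why Lemmas~\ref{lem- difference for alpha < 2}--\ref{lem- difference for alpha = 2} were formulated for the real-time integral $\int_0^t\!\int H_{t-s}\,z_d^{-\alpha}\,T_s\,dz\,ds$ rather than for a complex-time object.
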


\begin{rem}
  \label{rem-prop-difference}
  Let $\alpha\in(0,2)$, $\lambda\in[\lambda_*,0)$ and assume that $\me{-tL_\lambda}(x,y)$ satisfies the bound in \eqref{eq:thm-heatkernelLa- alpha < 2} with $\sigma$ defined by \eqref{eq-sigma}. Then \eqref{eq1-DifferenceKernels} remains valid. This follows by the same arguments as in the proof below.
\end{rem}

\begin{proof}	
  We only give the proof for $\alpha\in (0,2)$ since the case $\alpha=2$ can be treated similarly. 
  Let $T^{\alpha,\beta}_t(x,y)$ and $ {H}^{\alpha,\beta}_t(x,y)$ be the two functions defined in Lemma \ref{lem- difference for alpha < 2}. Let $\widetilde{p}_{t}(x,y)$ and $\widetilde{p}_{t,1}(x,y)$ denote the kernels of $e^{-tL_0}$ and $-L_0e^{-tL_0}$, respectively. By  Proposition \ref{thm-ptk},
  \[
    |\widetilde{p}_{t}(x,y)|+ t|\widetilde{p}_{t,1}(x,y)|\lesi H^{\alpha,\beta}_t(x,y)
  \]
  and
  \[
    |p_{t}(x,y)|+ t|p_{t,1}(x,y)|\lesi T^{\alpha,\beta}_t(x,y)
  \]
  for all $t>0$ and $x,y\in \Rd_+$. We now consider two cases.

  \bigskip

  \noindent{ {\textbf{Case 1:} $x_d\vee y_d\le t^{1/\alpha}$ OR $x_d\vee y_d\ge t^{1/\alpha}$ and $|x-y|\ge (x_d\wedge y_d)/2$.}}

  Since in this case 
  \[
    H^{\alpha,\beta}_t(x,y)+T^{\alpha,\beta}_t(x,y)\lesi L^{\alpha,\beta}_t(x,y),
  \]
  we get 
  \[
    |Q_t(x,y)|\lesi L^\alpha_t(x,y).
  \]
  
  \bigskip

  \noindent{ {\textbf{Case 2:} $x_d\vee y_d\ge t^{1/\alpha}$ and $|x-y|< (x_d\wedge y_d)/2$.}}

  By Duhamel's formula,
  \[
    \begin{aligned}
      \tilde{p}_{t}(x,y)& - {p}_{t}(x,y)\\
                        &= \lambda \int_0^t\int_{\Rd_+}\tilde{p}_{t-s}(x,z){z_d^{-\alpha}}p_{s}(z,y)dzds\\
                        &=\lambda \int_0^{t/2}\int_{\Rd_+}\tilde{p}_{t-s}(x,z){z_d^{-\alpha}}p_{s}(z,y)dzds+\lambda \int_0^{t/2}\int_{\Rd_+}\tilde{p}_{s}(x,z){z_d^{-\alpha}}p_{t-s}(z,y)dzds.
    \end{aligned}
  \]
  Differentiating both sides with respect to $t$ and multiplying by $t$, we get    
  \begin{equation}
    \label{eq-Kato}
    \begin{aligned}
      {Q}_t(x,y)&=\lambda  t\int_{\mathbb{R}^d_+}\tilde{p}_{t/2}(x,z) {z_d^{-\alpha}}p_{t/2}(z,y)dz+\lambda  t\int_0^{t/2}\int_{\mathbb{R}^d_+}\tilde{p}_{t-s,1}(x,z) {z_d^{-\alpha}}p_{s}(z,y)dzds\\
                &\quad + \lambda t\int_{t/2}^t\int_{\mathbb{R}^d_+}\tilde{p}_{t-s}(x,z) {z_d^{-\alpha}}p_{s,1}(z,y)dzds\\
                &=I_1+I_2+I_3.
    \end{aligned}
  \end{equation}
  The term $I_1$ can be written as
  \[
    \begin{aligned}
      \lambda t\int_{\mathbb{R}^d_+}\tilde{p}_{t/2}(x,z) {z_d^{-\alpha}}p_{t/2}(z,y)dz &=6\lambda \int_{t/3}^{t/2}\int_{\mathbb{R}^d_+}\tilde{p}_{t/2}(x,z) {z_d^{-\alpha}}p_{t/2}(z,y)dzds.
    \end{aligned}
  \]	
  It is easy to see that  $H^{\alpha,\beta}_{t-s}(\cdot,\cdot)\simeq H^{\alpha,\beta}_{t/2}(\cdot,\cdot)\gtrsim \tilde{p}_{t/2}(\cdot,\cdot)$ and $T^{\alpha,\beta}_{s}(\cdot,\cdot)\simeq T^{\alpha,\beta}_{t/2}(\cdot,\cdot)\gtrsim p_{t/2}(\cdot,\cdot)$ for  $s\in [t/3,t/2]$. This, along with Lemma \ref{lem- difference for alpha < 2}, implies
  \[
    \begin{aligned}
      I_1
      &\lesi \int_{t/3}^{t/2}\int_{\mathbb{R}^d}H^{\alpha,\beta}_{t-s}(x,z) {z_d^{-\alpha}}T^{\alpha,\beta}_s(z,y)dzds
      \lesi \int_{0}^{t}\int_{\mathbb{R}^d_+}H^{\alpha,\beta}_{t-s}(x,z) {z_d^{-\alpha}}T^{\alpha,\beta}_s(z,y)dzds \\
      &\lesi L^{\alpha,\beta}_t(x,y)+M^{\alpha,\beta}_t(x,y).
    \end{aligned}
  \]
  
  For the second term, since $t-s\simeq t$ for $ {s}\in (0,t/2)$, we have
  \[
    t|\tilde{p}_{t-s,1}(x,z)|\simeq (t-s)|\tilde{p}_{t-s,1}(x,z)|\lesi H^{\alpha,\beta}_{t-s}(x,z).
  \]
  Therefore, by Lemma \ref{lem- difference for alpha < 2},
  \[
    \begin{aligned}
      I_2&\lesi \int_0^{t/2} \int_{\mathbb{R}^d_+}H^{\alpha,\beta}_{t-s}(x,z) {z_d^{-\alpha}}T^{\alpha,\beta}_s(z,y)dzds \lesi L^{\alpha,\beta}_t(x,y)+M^{\alpha,\beta}_t(x,y).
    \end{aligned}
  \]
  Similarly,
  \[
    \begin{aligned}
      I_3&\lesi \int_{t/2}^t \int_{\mathbb{R}^d_+}H^{\alpha,\beta}_{t-s}(x,z)z_d^{-\alpha}T^{\alpha,\beta}_s(z,y)dzds \lesi L^{\alpha,\beta}_t(x,y)+M^{\alpha,\beta}_t(x,y).
    \end{aligned}
  \]
  This completes our proof.
\end{proof}

\begin{rem}
  In \cite{FrankMerz2023}, similar upper bounds were obtained for the kernel of the difference $ e^{-tL_0} -  e^{-t\La}$. However, these are not suitable for our purpose since the square functions used here are different.
\end{rem}

\section{Proof of the reversed Hardy inequality (Theorem~\ref{thm-difference})}
\label{s:reversedhardy}

We now use the previous bounds for the difference of kernels to prove Theorem~\ref{thm-difference}, i.e., the reversed Hardy inequality, expressed in our new square functions.

\medskip
\emph{Step 1.}
By Proposition \ref{prop-difference},
\begin{align}
  \begin{split}
    \Big(\int_0^\vc &t^{-s}\left|\left(t\La e^{-t\La} -tL_0e^{-tL_0}\right)f(x)\right|^2\f{dt}{t}\Big)^{1/2}\\
                    &= \Big[\sum_{j\in \mathbb{Z}}\int_{2^{\alpha j}}^{2^{\alpha(j+1)}} t^{-s}\left|\left(t\La e^{-t\La} -tL_0e^{-tL_0}\right)f(x)\right|^2\f{dt}{t}\Big]^{1/2}\\
                      &\le \left[\sum_{j\in \mathbb{Z}}\int_{2^{\alpha j}}^{2^{\alpha(j+1)}} t^{-s}\left(\int_{\mathbb{R}^d_+} [L^{\alpha,\beta}_t(x,y)+M^{\alpha,\beta}_t(x,y)]|f(y)|dy\right)^2\f{dt}{t}\right]^{1/2}\\
                      &\le  \sum_{j\in \mathbb{Z}} 2^{-js \alpha/2} \int_{\mathbb{R}^d_+} \left[L^{\alpha,\beta}_{2^{\alpha(j+1)}}(x,y)+M^{\alpha,\beta}_{2^{j\alpha}}(x,y)\right] y_d^{\alpha s/2} \,\frac{|f(y)|}{y_d^{\alpha s/2}} dy
  \end{split}
\end{align}
where in the last inequality we used the embedding $\ell_1\hookrightarrow\ell_2$. Thus, it suffices to show the $L^p(\R_+^d)$-boundedness of the operator with kernel
\begin{align}
  \label{eq:thm-differenceaux1}
  \sum_{j\in\Z}2^{-j\alpha s/2}\left[L_{2^{\alpha(j+1)}}^{\alpha,\beta}(x,y) + M_{2^{j\alpha}}^{\alpha,\beta}(x,y)\right] y_d^{\alpha s/2}.
\end{align}
To that end, we use Schur tests. In the following, we will always replace the Gaussian decay for $\alpha=2$ by a polynomial decay of order $d+2$.

\smallskip
\emph{Step 2.}
We begin with the kernel coming from the $M_{2^{j\alpha}}^{\alpha,\beta}(x,y)$-part in \eqref{eq:thm-differenceaux1}. As discussed in the proof of Lemma~\ref{lem- difference for alpha < 2}, we have $x_d\simeq y_d$ on the support of $M_{2^{j\alpha}}^{\alpha,\beta}(x,y)$. Thus, the kernel
\begin{align}
  \begin{split}
    \sum_{j\in\Z}2^{-j\alpha s/2} M_{2^{j\alpha}}^{\alpha,\beta}(x,y)\, y_d^{\alpha s/2}
    \simeq \sum_{N\in 2^{\alpha \Z}} N^{-s/2} M_{N}^{\alpha,\beta}(x,y)\, (x_d y_d)^{\alpha s/4}
  \end{split}
\end{align}
in question is replaced by a symmetric one and we only have to perform a single Schur test instead of two. The Schur test is analogous to that in the proof of Step 2 in \cite[Theorem~5]{FrankMerz2023}, where the summation over dyadic numbers $2^{\alpha\Z}$ is replaced with an integral over $t$ with respect to the Haar measure $dt/t$. Indeed, the bound follows from
\begin{align}
  \begin{split}
    & \int\limits_{y_d\simeq x_d} \sum_{N\in2^{\alpha\Z}}N^{-\frac s2-\frac d\alpha}\,\frac{N}{x_d^\alpha}\left(\frac{N^{1/\alpha}}{N^{1/\alpha}+|x-y|}\right)^{d+\beta}\one_{N< x_d^\alpha}x_d^{\alpha s/2}\,dy \\
    & \quad \lesssim x_d^{\alpha s/2-\alpha}\sum_{2^{\alpha\Z}\ni N<x_d^\alpha}\int_{\R^d} \left(1\wedge\frac{N^{(d+\beta)/\alpha}}{(N^{1/\alpha}+|x-y|)^{d+\beta}}\right)\,dy
      \lesssim1.
  \end{split}
\end{align}

\smallskip
\emph{Step 3.}
We now consider the kernel coming from the $L_{2^{\alpha(j+1)}}^{\alpha,\beta}(x,y)$-part in \eqref{eq:thm-differenceaux1}. As in the proof of \cite[Theorem~5]{FrankMerz2023}, we will prove the stronger statement that the operator with kernel
\begin{align}
  \begin{split}
    \tilde L_{t}^{\alpha,\beta}(x,y)
    & := \left(\one_{x_d\vee y_d<t^{1/\alpha}} + \one_{x_d\vee y_d>t^{1/\alpha}}\one_{|x-y|>(x_d\wedge y_d)/2}\right) \, \left(1\wedge\frac{x_d}{t^{1/\alpha}}\right)^{-r} \left(1\wedge\frac{y_d}{t^{1/\alpha}}\right)^{-r} \\
    & \qquad \times t^{-d/\alpha} \left(\frac{t^{1/\alpha}}{t^{1/\alpha} + |x-y|}\right)^{d+\beta}
  \end{split}
\end{align}
is $L^p(\R_+^d)$-bounded, where $-r:=(q\wedge 0)\le0$ and $q=\min\{\sigma,(\alpha-1)_+\}$. As in the proof of \cite[Theorem~5]{FrankMerz2023}, we will show that
\begin{align}
  \label{eq:thm-differenceaux2}
  \sum_{N\in2^{\alpha\Z}} N^{-s/2} \tilde L_N^{\alpha,\beta}(x,y) y_d^{\alpha s/2}
  \lesssim \left( \frac{|x-y|\vee x_d\vee y_d}{\sqrt{x_d y_d}} \right)^{2r} \frac{(|x-y|\vee x_d\vee y_d)^{\alpha}}{(|x-y|\vee (x_d\wedge y_d))^{d+\alpha}}.
\end{align}
As with the $M_t^{\alpha,\beta}$-part, the proof is similar to that in \cite{FrankMerz2023}. Let us give the details. We distinguish between $N\lessgtr(x_d\vee y_d)^\alpha$. On the one hand,
\begin{align}
  \label{eq:thm-differenceaux3}
  \begin{split}
    & \sum_{2^{\alpha\Z}\ni N\ge(x_d\vee y_d)^\alpha} N^{-\frac s2} \tilde L_N^{\alpha,\beta}(x,y) y_d^{\alpha s/2} \\
    & \quad \simeq y_d^{\frac{\alpha s}{2}}(x_d y_d)^{-r} \sum_{2^{\alpha\Z}\ni N\ge(x_d\vee y_d)^\alpha} N^{-\frac s2+\frac{2r}{\alpha}-\frac d\alpha}\left(1\wedge\frac{N^{(\beta+d)/\alpha}}{|x-y|^{d+\beta}}\right) \\
    & \quad \lesssim y_d^{\frac{\alpha s}2} (x_dy_d)^{-r} \left[ (|x-y| \! \vee \! x_d \! \vee \! y_d)^{-\frac{\alpha s}2+ 2r-d} + \one_{x_d \vee y_d \le |x-y|} |x-y|^{-d-\alpha} (x_d \! \vee \! y_d)^{2r+\alpha-\frac{\alpha s}2} \right].
  \end{split}
\end{align}
The first term here comes from the $N$-summation from $(|x-y|\vee x_d\vee y_d)^\alpha$ to $\infty$. This sum converges since $-\frac s2+\frac{2r}\alpha-\frac d\alpha<0$. (Note that $s>0$ and $2r\le(1-\alpha)_+< 1$.) The second term comes from an upper bound on the sum between $(x_d\vee y_d)^\alpha$ and $|x-y|^\alpha$, in fact, from an upper bound on the integral between $0$ and $|x-y|^\alpha$. This sum converges since $-\frac s2 + \frac{2r}{\alpha}+1>0$.
	
We now turn to the contribution to $\tilde L_N^{\alpha,\beta}$ from $\{x_d\vee y_d\ge t^{1/\alpha}\}$. We have
\begin{align}
  \label{eq:thm-differenceaux4}
  \begin{split}
    & \sum_{2^{\alpha\Z}\ni N\le(x_d\vee y_d)^\alpha} N^{-\frac s2} \tilde L_N^{\alpha,\beta}(x,y) \, y_d^{\frac{\alpha s}2} \\
    & \simeq y_d^{\frac{\alpha s}2} \sum_{2^{\alpha\Z}\ni N\le(x_d\vee y_d)^\alpha} N^{-\frac s2 -\frac d\alpha} \left( 1 \wedge \frac{x_d\wedge y_d}{N^{1/\alpha}} \right)^{-r} 
      \left( 1\wedge \frac{N^{(\beta+d)/\alpha}}{|x-y|^{d+\beta}} \right) \one_{|x-y|\ge (x_d\wedge y_d)/2} \\
    & \le y_d^{\frac{\alpha s}2} |x-y|^{-d-\alpha} \sum_{2^{\alpha\Z}\ni N\le(x_d\vee y_d)^\alpha} N^{1-\frac s2} \left( 1 \wedge \frac{x_d\wedge y_d}{N^{1/\alpha}} \right)^{-r} \one_{|x-y|\ge (x_d\wedge y_d)/2} \\
    & \lesssim y_d^{\frac{\alpha s}2} |x-y|^{-d-\alpha} \left[ (x_d\wedge y_d)^{\alpha-\frac{\alpha s}2} + (x_d\wedge y_d)^{-r} (x_d\vee y_d)^{r+\alpha-\frac{\alpha s}2} \right] \one_{|x-y|\ge (x_d\wedge y_d)/2} \,.
  \end{split}
\end{align}
The first term here comes from the sum from $0$ to $(x_d\wedge y_d)^\alpha$. This converges since $s<2$. The second term comes from an upper bound on the sum from $(x_d\wedge y_d)^\alpha$ to $(x_d\vee y_d)^\alpha$, in fact, from an upper bound on the sum between $0$ and $(x_d\vee y_d)^\alpha$. This sum converges since $-\frac s2+\frac r\alpha>-1$. Combining \eqref{eq:thm-differenceaux3}--\eqref{eq:thm-differenceaux4} as in the proof of \cite[Theorem~5]{FrankMerz2023} eventually yields \eqref{eq:thm-differenceaux2}. Thus, the $L^p(\R_+^d)$-boundedness of $L_{2^{\alpha(j+1)}}^{\alpha,\beta}(x,y)$-part in \eqref{eq:thm-differenceaux1} follows from Proposition~\ref{schurkillip2} below. This concludes the proof of Theorem~\ref{thm-difference}. \qed

\begin{prop}
  \label{schurkillip2}
  Let $\alpha>0$ and $0\le r<\frac12$. Then the integral operator with integral kernel
  \begin{align*}
    \left( \frac{|x-y|\vee x_d\vee y_d}{\sqrt{x_d y_d}} \right)^{2r} \frac{(|x-y|\vee x_d\vee y_d)^{\alpha}}{(|x-y|\vee (x_d\wedge y_d))^{d+\alpha}}
  \end{align*}
  is bounded on $L^p(\R_+^d)$ for all $p\in(\frac{1}{1-r},\frac1r)$.
\end{prop}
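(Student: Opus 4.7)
The plan is to apply a Schur test with a single power weight $w(x) = x_d^a$, $a \in \R$ to be chosen. Since the kernel
\[
K(x,y) = \left(\frac{|x-y|\vee x_d\vee y_d}{\sqrt{x_d y_d}}\right)^{2r} \frac{(|x-y|\vee x_d\vee y_d)^\alpha}{(|x-y|\vee (x_d\wedge y_d))^{d+\alpha}}
\]
is non-negative and symmetric in $(x,y)$, both Schur inequalities reduce to a single one: $\int K(x,y) \, y_d^b \, dy \lesssim x_d^b$ for $b \in \{ap, ap'\}$. The translation invariance of $K$ in the first $d-1$ coordinates, together with the homogeneity $K(\lambda x, \lambda y) = \lambda^{-d} K(x,y)$, yields the scaling identity
\[
\int_{\R_+^d} K(x,y) \, y_d^b \, dy = C(b) \, x_d^b, \quad C(b) := \int_{\R_+^d} K\bigl((0,\dots,0,1), y\bigr) \, y_d^b \, dy,
\]
so the task reduces to determining the range of $b$ for which $C(b) < \infty$.

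To find this range, I would decompose the integration domain according to $y_d \in (0, 1/2)$, $y_d \in [1/2, 2]$, and $y_d \in (2, \infty)$, and further by the three subregions distinguished by the kernel formula, namely $|x-y| \ge x_d \vee y_d$, $|x-y| < x_d \wedge y_d$, and the intermediate case. In the middle $y_d$-range the kernel is uniformly bounded and contributes $O(1)$. A careful analysis of the remaining subregions, integrating out $y'$ and collecting the leading contributions, reveals that the $y_d$-integrand behaves like $y_d^{-r+b}$ as $y_d \to 0^+$ and like $y_d^{r-1+b}$ as $y_d \to \infty$. Hence $C(b) < \infty$ if and only if $b \in (r-1, -r)$, an interval that is nonempty precisely because $r < 1/2$.

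Finally, it remains to choose $a$ so that both $ap$ and $ap'$ lie in $(r-1, -r)$. Using $p + p' = p p'$, elementary algebra shows that such an $a$ exists iff $\max(p, p') < 1/r$, which is exactly the condition $p \in (\tfrac{1}{1-r}, \tfrac{1}{r})$. The main technical obstacle is the asymptotic computation of $C(b)$ for large $y_d$ in the outer region $|x-y| \ge x_d \vee y_d$: the radial integral in $y'$ takes the form $\int^\infty s^{2r-2} \, ds$, whose convergence is precisely the role of the hypothesis $r < 1/2$, and keeping careful track of the $y_d$-exponents in every subregion is what pins down the critical interval $(r-1, -r)$.
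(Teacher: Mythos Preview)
Your approach is correct and is essentially the same as the paper's. Both proofs run a weighted Schur test with a power weight in the last coordinate; the paper writes the weight as $w(x,y)=(x_d/y_d)^\beta$ with $\beta\in(pr,p(1-r))\cap(p'r,p'(1-r))$, which is equivalent (up to relabeling) to your condition that $ap,ap'\in(r-1,-r)$. The only organizational difference is that the paper first integrates out the $\R^{d-1}$-variables using a marginal bound quoted from \cite[(20)]{FrankMerz2023}, reducing to a one-dimensional Schur test, whereas you use homogeneity and translation invariance to normalize $x=(0,\dots,0,1)$ and analyze the full integral directly; your asymptotics $y_d^{b-r}$ near $0$ and $y_d^{b+r-1}$ near $\infty$ match the paper's computation after the change of variables $t=y_d/x_d$, and your identification of $r<\tfrac12$ with the convergence of $\int^\infty \rho^{2r-2}\,d\rho$ is exactly where the paper's cited marginal bound uses this hypothesis.
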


\begin{proof}
  The proof is similar to that of \cite[Proposition~19]{FrankMerz2023}.
  We denote the kernel in the proposition by $k(x,y)$. By symmetry of $k(x,y)$ and a duality argument, it suffices to consider $p\le2$.
  
  \emph{Step 1.}
  As a preliminary step to the main argument, we integrate over the $\R^{d-1}$-variables. By \cite[(20)]{FrankMerz2023},
  \begin{equation}
    \label{eq:schurmarginal}
    \int_{\R^{d-1}} k(x,y)\,dy'
    \lesssim \left( \frac{x_d\vee y_d}{\sqrt{x_d y_d}} \right)^{2r} \frac{(x_d\vee y_d)^{\alpha}}{(|x_d-y_d|\vee (x_d\wedge y_d))^{1+\alpha}}.
  \end{equation}
  
  \smallskip
  \emph{Step 2.}
  We perform weighted Schur tests for the operator with kernel given by the right side of \eqref{eq:schurmarginal}. We use the weight
  $$
  w(x,y)=\left( \frac{x_d}{y_d} \right)^\beta
  \qquad\text{with}\ \beta\in(pr,p(1-r))\cap(p'r,p'(1-r)) \,.
  $$
  Since $r<1/2$ and $p\in(1/(1-r),1/r)$, it is possible to find such a $\beta$.
  
  For the first part of the Schur test, we use \eqref{eq:schurmarginal} to bound
  \begin{align*}
    \int_{\R^d_+} w(x,y)^{1/p} k(x,y)\,dy
    & \simeq \int_0^\infty \left( \frac{x_d}{y_d} \right)^{\beta/p} \left( \frac{x_d\vee y_d}{\sqrt{x_d y_d}} \right)^{2r} \frac{(x_d\vee y_d)^{\alpha}}{(|x_d-y_d|\vee (x_d\wedge y_d))^{1+\alpha}}\,dy_d \\
    & = \int_0^\infty t^{-\beta/p -r} \frac{(1\vee t)^{\alpha+2r}}{(|1-t|\vee (1\wedge t))^{1+\alpha}}\,dt \\
    & \simeq \int_0^\infty t^{-\beta/p-r} (1\wedge t^{-1+2r})\,dt
      <\infty.
  \end{align*}
  The finiteness of the last integral uses the assumptions $pr<\beta<p(1-r)$.
  
  For the second part of the Schur test, we note that, by symmetry, \eqref{eq:schurmarginal} remains valid with $dy'$ replaced by $dx'$. Thus, and by $p'r<\beta<p'(1-r)$,
  \begin{align*}
    \int_{\R^d_+} w(x,y)^{-1/p'} k(x,y)\,dx
    & \simeq \int_0^\infty \left( \frac{y_d}{x_d} \right)^{\beta/p'} \left( \frac{x_d\vee y_d}{\sqrt{x_d y_d}} \right)^{2r} \frac{(x_d\vee y_d)^{\alpha}}{(|x_d-y_d|\vee (x_d\wedge y_d))^{1+\alpha}}\,dx_d \\
    & = \int_0^\infty t^{-\beta/p' -r} \frac{(1\vee t)^{\alpha+2r}}{(|1-t|\vee (1\wedge t))^{1+\alpha}}\,dt
      <\infty,
  \end{align*}
  as before. The $L^p(\R^d_+)$-boundedness therefore follows from the Schur test.
\end{proof}

\section{Proof of the generalized Hardy inequality (Theorem~\ref{thm-HardyIneq})}
\label{s:generalizedhardy}

To prove Theorem~\ref{thm-HardyIneq}, we use the following Riesz kernel estimates.

\begin{lem}[{\cite[Theorem~11]{FrankMerz2023}}]
  \label{lem-kernel of fractional power of L lambda}
  Let $\alpha\in (0,2]$ and let $\lambda\ge 0$ when $\alpha\in (0,2)$ and $\lambda\ge -1/2$ when $\alpha=2$. Let $\sigma$ be defined by \eqref{eq-sigma} and let $s\in (0,\f{2d}{\alpha}\wedge \f{2(d+2\sigma)}{\alpha})$. Then,
  \begin{enumerate}[\rm (a)]
  \item For all $x,y \in \Rd_+$ with $|x-y|\le x_d\vee y_d$,
    \begin{align}
      \label{eq:riesz}
      L_\lambda^{-s/2}(x,y)
      \simeq |x-y|^{\f{\alpha s}{2}-d}\Big(1\wedge\f{x_d}{|x-y|}\wedge\f{y_d}{|x-y|}\Big)^\sigma.
    \end{align}
  \item For all $x,y \in \Rd_+$ with $x_d\vee y_d\le |x-y|$,
    \begin{align}
      \label{eq:rieszimprovedsim}
      \begin{split}
        L_\lambda^{-s/2}(x,y)
        & \simeq |x-y|^{\f{\alpha s}{2}-d}\Big(\f{x_d y_d}{|x-y|^2}\Big)^\sigma\\
        & \hspace{-5em} \times \Big[\one_{\alpha=2}+\Big(\one_{\sigma\le \f{\alpha}{2}(1+\f{s}{2})}+\Big(\ln\f{|x-y|}{x_d\vee y_d}\Big){\one_{\sigma=\f{\alpha}{2}(1+\f{s}{2})}}+\Big(\f{|x-y|}{x_d\vee y_d}\Big)^{2\sigma-\alpha(1+\f{s}{2})}\one_{\sigma>\f{\alpha}{2}(1+\f{s}{2})} \Big)\one_{\alpha<2}\Big].
      \end{split}
    \end{align}
  \end{enumerate}
\end{lem}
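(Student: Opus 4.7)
The natural starting point is the subordination identity
\begin{equation*}
  L_\lambda^{-s/2}
  = \f{1}{\Gamma(s/2)}\int_0^\infty t^{s/2-1}\me{-tL_\lambda}\,dt,
\end{equation*}
valid for $0<s<2d/\alpha$ (so that the integral at $t=\infty$ converges against the heat kernel's spatial decay), which reduces the problem to integrating $p_t(x,y)$ against $t^{s/2-1}$. I would insert the \emph{two-sided} heat kernel bounds from Theorem~\ref{thm-heatkernelLa- alpha < 2}, so that both inequalities in \eqref{eq:riesz}--\eqref{eq:rieszimprovedsim} follow simultaneously from the same computation. After the change of variables $t=r^\alpha$ and writing $R:=|x-y|$, and assuming without loss of generality $x_d\le y_d$, the problem is reduced to evaluating
\begin{equation*}
  J(x,y) \simeq \int_0^\infty r^{\alpha s/2-d-1}\,\Big(1\wedge\tfrac{x_d}{r}\Big)^{\sigma}\Big(1\wedge\tfrac{y_d}{r}\Big)^{\sigma}\,F_\alpha\!\Big(\tfrac{R}{r}\Big)\,dr,
\end{equation*}
with $F_\alpha(u)=(1+u)^{-d-\alpha}$ if $\alpha<2$ and $F_\alpha(u)=\me{-u^2}$ if $\alpha=2$.

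For part~(a), i.e.\ $R\le y_d$, I would split the $r$-integral at the four natural breakpoints $r\in\{R, x_d, y_d, \infty\}$ (absorbing overlaps when $R\ge x_d$). On each interval the three scale factors degenerate to monomials in $r$, so each piece becomes an elementary integral of the form $\int r^{A-1}\,dr$; the restrictions $s\in(0, 2d/\alpha)$ and $s<2(d+2\sigma)/\alpha$ ensure that the tail integrals at $0$ and $\infty$ converge. Comparing exponents, the dominant contribution comes from $r\simeq R\wedge x_d\wedge y_d$, yielding $R^{\alpha s/2-d}$ multiplied by the factor $(1\wedge x_d/R\wedge y_d/R)^\sigma$ in both orderings $R\lessgtr x_d$. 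Both upper and lower bounds follow by keeping, respectively, all contributions and only the dominant piece, which proves \eqref{eq:riesz}.

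Part~(b), i.e.\ $y_d\le R$, is where the interesting phenomena appear. Now the spatial decay $F_\alpha(R/r)$ shuts off the integrand for $r\ll R$, so the breakpoints reorder to $r\in\{x_d, y_d, R, \infty\}$. For $\alpha=2$ the Gaussian $\me{-R^2/r^2}$ cuts off sharply at $r\simeq R$, making the analysis robust and giving a single term $R^{\alpha s/2-d}(x_dy_d/R^2)^\sigma$. For $\alpha<2$ the slow polynomial decay $F_\alpha(R/r)\simeq (r/R)^{d+\alpha}$ for $r\le R$ contributes a non-trivial integral over $r\in[y_d,R]$, which evaluates to
\begin{equation*}
  \int_{y_d}^{R} r^{\alpha s/2+\alpha-2\sigma-1}\,dr
  \simeq \begin{cases} R^{\alpha s/2+\alpha-2\sigma}, & \sigma<\tfrac{\alpha}{2}(1+\tfrac{s}{2}),\\
                      \ln(R/y_d), & \sigma=\tfrac{\alpha}{2}(1+\tfrac{s}{2}),\\
                      y_d^{\alpha s/2+\alpha-2\sigma}, & \sigma>\tfrac{\alpha}{2}(1+\tfrac{s}{2}),
         \end{cases}
\end{equation*}
which, after multiplying by $R^{-d-\alpha}(x_dy_d)^\sigma$ and combining with the $r\in[R,\infty)$ piece (which always gives $R^{\alpha s/2-d}(x_dy_d/R^2)^\sigma$), reproduces the three regimes in \eqref{eq:rieszimprovedsim}.

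The main obstacle is organizational rather than analytical: there are several geometric regimes ($R\lessgtr x_d$ in (a); $\sigma$ above, at, or below the threshold $\alpha(1+s/2)/2$ in (b)) and for each one must verify that the dominant piece of the $r$-integral is \emph{both} an upper bound (so the remaining pieces are indeed smaller) and a lower bound (which requires using the pointwise lower bound on $p_t$ on a set of $r$'s of comparable size). The $\sigma=\alpha(1+s/2)/2$ borderline case is the most delicate, as it requires a logarithm whose argument $R/(x_d\vee y_d)$ must be produced cleanly from the integral over $[y_d,R]$ and shown to dominate the neighboring regimes.
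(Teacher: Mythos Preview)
The paper does not prove this lemma; it is quoted verbatim from \cite[Theorem~11]{FrankMerz2023} and used as a black box in the proof of Theorem~\ref{thm-HardyIneq}. Your plan --- the subordination formula $L_\lambda^{-s/2}=\Gamma(s/2)^{-1}\int_0^\infty t^{s/2-1}p_t\,dt$ followed by inserting the two-sided heat kernel bounds of Theorem~\ref{thm-heatkernelLa- alpha < 2} and splitting the $t$-integral at the breakpoints $x_d^\alpha$, $y_d^\alpha$, $|x-y|^\alpha$ --- is exactly the route taken in the cited source (as Remark~\ref{rieszrem} confirms: the upper/lower heat kernel bound in \eqref{eq:thm-heatkernelLa- alpha < 2} directly implies the upper/lower Riesz bound). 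Your identification of the threshold $\sigma=\tfrac{\alpha}{2}(1+\tfrac{s}{2})$ as arising from the integral $\int_{y_d}^{R}r^{\alpha s/2+\alpha-2\sigma-1}\,dr$ over the intermediate range, and of the Gaussian cutoff as the reason no such trichotomy appears when $\alpha=2$, is correct.
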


\begin{rem}
  \label{rieszrem}
  Let $\alpha\in(0,2)$, $\lambda\in[\lambda_*,0)$ and assume that $\me{-tL_\lambda}(x,y)$ satisfies the bound in \eqref{eq:thm-heatkernelLa- alpha < 2} with $\sigma$ defined by \eqref{eq-sigma}. Then \eqref{eq:riesz} and \eqref{eq:rieszimprovedsim} remain valid. Similarly, the upper (resp.~lower) bound in \eqref{eq:thm-heatkernelLa- alpha < 2} implies the upper (resp.~lower) bound in \eqref{eq:riesz} and \eqref{eq:rieszimprovedsim}. This was observed in \cite[Theorem~11]{FrankMerz2023}.
\end{rem}

Besides these Riesz kernel bounds, we use the following extension of \cite[Lemma~15]{FrankMerz2023}, where $(L_\lambda)^{s/2}C_c^\infty(\R_+^d)\subseteq L^2(\R_+^d)$, i.e., $C_c^\infty(\R_+^d)\subset {\dom} L_\lambda^{s/2}$ was shown for all $s,\alpha\in (0,2]$.

\begin{lem}
  \label{domain} 
  Let $1<p<\vc$, $\alpha\in (0,2]$, and let $\lambda \ge 0$ when $\alpha<2$ and $\lambda\ge -1/4$ when $\alpha=2$. Let $\sigma$ be defined by \eqref{eq-sigma} and suppose $s, \alpha\in (0,2]$. Then, $(L_\lambda)^{s/2}C_c^\infty(\R_+^d)\subseteq L^p(\R_+^d)$.
\end{lem}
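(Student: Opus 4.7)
The plan is to handle $s=2$ by direct computation and to reduce $s\in(0,2)$ to the convergence of a Balakrishnan subordination integral in $L^p$. Fix $\phi\in C_c^\infty(\R_+^d)$ supported in a compact set $K\subset\R_+^d$ bounded away from $\partial\R_+^d$, and fix $p\in(1,\infty)$. Under the hypotheses of the lemma one has $\sigma\ge 0$, so Lemma~\ref{lem-Lp boundedness of Tt} (applied with $q=p$) yields the uniform bound $\|e^{-tL_\lambda}\|_{p\to p}\lesssim 1$ for all $t>0$, which I will use throughout.

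For $s=2$, I would show $L_\lambda\phi\in L^p$ directly. The potential term $\lambda x_d^{-\alpha}\phi$ is bounded with compact support, since $x_d$ is bounded below on $K$. For the kinetic term $(-\Delta)^{\alpha/2}_{\R_+^d}\phi$: when $\alpha=2$ it lies in $C_c^\infty(\R_+^d)$; when $\alpha\in(0,2)$, the pointwise integral formula shows it is bounded on $K$ by smoothness of $\phi$ there, and for $x\notin K$ it equals $-c_{d,\alpha}\int_K\phi(y)|x-y|^{-d-\alpha}\,dy$, which decays like $|x|^{-d-\alpha}$ at infinity. In all cases, $L_\lambda\phi\in L^r(\R_+^d)$ for every $r\in[1,\infty]$.

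For $s\in(0,2)$, I would work from the Balakrishnan subordination identity
\[
L_\lambda^{s/2}\phi=\frac{s/2}{\Gamma(1-s/2)}\int_0^\infty t^{-s/2-1}\bigl(\phi-e^{-tL_\lambda}\phi\bigr)\,dt,
\]
which holds in $L^2(\R_+^d)$ by the spectral theorem, since $\phi\in\dom(L_\lambda^{s/2})$ in $L^2$ by \cite[Lemma~15]{FrankMerz2023}. To show that the right-hand side converges as an $L^p$-Bochner integral, I would split at $t=1$. For $t\le 1$, Duhamel's formula combined with the uniform $L^p$-bound on the semigroup gives
\[
\|\phi-e^{-tL_\lambda}\phi\|_p\le\int_0^t\|e^{-uL_\lambda}L_\lambda\phi\|_p\,du\lesssim t\,\|L_\lambda\phi\|_p,
\]
finite by the $s=2$ step, making $\int_0^1 t^{-s/2-1}\cdot t\,dt<\infty$ since $s<2$. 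For $t\ge 1$, the triangle inequality and the same semigroup bound yield $\|\phi-e^{-tL_\lambda}\phi\|_p\lesssim\|\phi\|_p$, and $\int_1^\infty t^{-s/2-1}\,dt<\infty$ since $s>0$.

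The main obstacle is matching this $L^p$-Bochner object with the $L^2$-spectral definition of $L_\lambda^{s/2}\phi$. I would test against arbitrary $\eta\in C_c^\infty(\R_+^d)\subset L^{p'}\cap L^2$: Fubini (justified by the integrability just established, applied with the factor $\|\eta\|_{p'}$) together with self-adjointness of $e^{-tL_\lambda}$ on $L^2$ gives
\[
\int\Bigl[\int_0^\infty t^{-s/2-1}(\phi-e^{-tL_\lambda}\phi)\,dt\Bigr]\eta\,dx=\int\phi\Bigl[\int_0^\infty t^{-s/2-1}(\eta-e^{-tL_\lambda}\eta)\,dt\Bigr]\,dx,
\]
and the bracketed integral on the right equals $\tfrac{\Gamma(1-s/2)}{s/2}L_\lambda^{s/2}\eta$ in $L^2$ by the same identity applied to $\eta\in\dom(L_\lambda^{s/2})$. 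Self-adjointness of $L_\lambda^{s/2}$ in $L^2$ therefore identifies the $L^p$-Bochner integral with $L_\lambda^{s/2}\phi$ when tested against a dense subset of $L^{p'}$, which closes the argument.
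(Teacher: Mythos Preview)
Your proof is correct and follows essentially the same route as the paper: treat $s=2$ by showing $L_\lambda\phi\in L^p$ directly, and reduce $s\in(0,2)$ to convergence of a subordination integral split at $t=1$, using uniform $L^p$-boundedness of the semigroup and the $s=2$ step. The only cosmetic differences are that the paper writes the subordination formula as $\int_0^\infty u^{-s/2}L_\lambda e^{-uL_\lambda}\,du$ rather than the Balakrishnan form $\int_0^\infty t^{-s/2-1}(\phi-e^{-tL_\lambda}\phi)\,dt$, and handles the $s=2$ kinetic term via the identity $L_\lambda f=((-\Delta)^{\alpha/2}\tilde f)|_{\R_+^d}+(\lambda-\lambda_0)x_d^{-\alpha}f$ and the Fourier transform rather than via the pointwise integral and decay at infinity.
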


\begin{proof}
  We first consider $s=2$.
  Let $\hat f(\xi)=(2\pi)^{-d/2}\int_{\R^d}\me{2\pi ix\cdot\xi}f(x)\,dx$ denote the Fourier transform of $f$. Then, for $\lambda_0=C(\alpha/2)$, we have (see, e.g., \cite{Bogdanetal2003})
  \[
    L_\lambda f = ((-\Delta)^{\alpha/2}\widetilde{f})|_{\Rd_+} + (\lambda-\lambda_0)x_d^{-\alpha}f,
  \]
  where $\widetilde{f}$ is the zero extension of $f$ to $\Rd$. Since $((-\Delta)^{\alpha/2}\widetilde{f})^\wedge (\xi) = |\xi|^\alpha (\widetilde{f})^\wedge \ (\xi)$, $((-\Delta)^{\alpha/2}\widetilde{f})$ is a Schwartz function and hence $((-\Delta)^{\alpha/2}\widetilde{f}) \in L^p(\Rd)$. On the other hand, since $x_d^{-\alpha}$ is bounded on the support of $f$, $x_d^{-\alpha}f\in L^p(\Rd_+)$. Consequently, $L_\lambda f\in L^p(\Rd_+)$ for all $1<p<\vc$.

  Now let $s\in (0,2)$. We use 
  \[
    \begin{aligned}
      \La^{s/2}f&=\f{1}{\Gamma(1-s/2)}\int_{0}^\vc u^{1-s/2} \La e^{-u\La}f\f{du}{u}\\
                &=\f{1}{\Gamma(1-s/2)}\int_{0}^1 u^{1-s/2}   e^{-u\La}\La f\f{du}{u} + \f{1}{\Gamma(1-s/2)}\int_{1}^\vc u \La e^{-u\La} f\f{du}{u^{1+s/2}},
    \end{aligned}
  \]
  which implies 
  \[
    \begin{aligned}
      \|\La^{s/2}f\|_p&=\f{1}{\Gamma(1-s/2)}\int_{0}^1 u^{1-s/2}   \|e^{-u\La}\La f\|_p \f{du}{u} + \f{1}{\Gamma(1-s/2)}\int_{1}^\vc \|u \La e^{-u\La} f\|_p\f{du}{u^{1+s/2}}.
    \end{aligned}
  \]
  From Lemma \ref{lem-Lp boundedness of Tt} and Proposition \ref{thm-ptk}, we have
  \[
    \|e^{-u\La}\|_{p\to p}+ \|u\La e^{-u\La}\|_{p\to p} \lesi 1
  \]
  uniformly in $u>0$.
  Therefore,
  \[
    \begin{aligned}
      \|\La^{s/2}f\|_p&=\f{1}{\Gamma(1-s/2)}\int_{0}^1 u^{1-s/2}   \|\La f\|_p \f{du}{u} + \f{1}{\Gamma(1-s/2)}\int_{1}^\vc \|f\|_p\f{du}{u^{1+s/2}}\\
                      &\lesi \|\La f\|_p + \|f\|_p.
    \end{aligned}
  \]
  Since $f, \La f \in L^p(\Rd_+)$, $\La^{s/2}f\in L^p(\Rd_+)$.
\end{proof}

\begin{rem}
  \label{domain Llambda}
  Let $\alpha\in(0,2)$, $\lambda\in[\lambda_*,0)$ and assume that $\me{-tL_\lambda}(x,y)$ satisfies the bound in \eqref{eq:thm-heatkernelLa- alpha < 2} with $\sigma$ defined by \eqref{eq-sigma}. Then Lemma \ref{domain} remains valid for for $\f{1}{1+\sigma}=:(p_{-\sigma})'<p< p_{-\sigma}:=-\f{1}{\sigma}$. This follows by the same arguments as in the proof above, taking into account Remark \ref{rem-thm-ptk}.
\end{rem}

We can now give the

\begin{proof}[Proof of Theorem~\ref{thm-HardyIneq}]
  The necessity of $(\f{\alpha s}{2}-\sigma)_+<\f{1}{p}<1+\sigma\wedge0$ for \eqref{eq:thm-HardyIneqPre} to hold is proved exactly as in \cite[Theorem~13]{FrankMerz2023}, so we omit the details.

  Thus, let $(\f{\alpha s}{2}-\sigma)_+<\f{1}{p}<1+\sigma\wedge0$ in the following.
  Formula \eqref{eq:thm-HardyIneq} follows from \eqref{eq:thm-HardyIneqPre} and Lemma~\ref{domain} by setting $f=(L_\lambda)^{s/2}g$ with $g\in C_c^\infty(\R_+^d)$. Thus, it suffices to show \eqref{eq:thm-HardyIneqPre} for all $f\in L^p(\R_+^d)$. We set
  \[
    K(x,y):=x_d^{-\alpha s/2} L_\lambda^{-s/2}(x,y)
  \]
  and show that the integral operator with kernel $K(x,y)$ is bounded on $L^p(\R_+^d)$. We will use weighted Schur tests for the kernel $K(x,y)$ in four regions corresponding to 
  \[
    |x-y|\le 4(x_d\wedge y_d), 4x_d \le |x-y|\le 4y_d, 	4y_d \le |x-y|\le 4x_d,
  \]
  and
  \[
    4(x_d\vee y_d) \le |x-y|.
  \]
	
  \bigskip
  
  \noindent
  \textbf{Case 1: $|x-y|\le 4(x_d\wedge y_d)$.}
  The Schur test in this region was already carried out in \cite{FrankMerz2023}. We give the proof for completeness.
  From Lemma \ref{lem-kernel of fractional power of L lambda} in this region, 
  \[
    K(x,y)\simeq  \f{x_d^{-\alpha s/2}}{|x-y|^{d-\f{\alpha s}{2}}}.
  \]
  Since $d-\alpha s/2 < d$, we have
  \[
    \begin{aligned}
      \int_{|x-y|\le 4(x_d\wedge y_d)}K(x,y) dy
      &\lesi \int_{|x-y|\le 4y_d}\f{x_d^{-\alpha s/2}}{|x-y|^{d-\f{\alpha s}{2}}} dy\\
      & \lesi 1.
    \end{aligned} 
  \]
  For the integral with respect to $dx$, we observe $y_d \ge x_d + |x_d - y_d|\le x_d + |x - y|\le 5x_d$. Therefore,
  \[
    \begin{aligned}
      \int_{|x-y|\le 4(x_d\wedge y_d)}K(x,y) dx
      &\lesi \int_{|x-y|\le 4 y_d}\f{y_d^{-\alpha s/2}}{|x-y|^{d-\f{\alpha s}{2}}} dx
        \lesi 1.
    \end{aligned} 
  \]
  
  \bigskip
  
  \noindent
  \textbf{Case 2: $4x_d \le |x-y|\le 4y_d$.} From Lemma \ref{lem-kernel of fractional power of L lambda},
  \[
    K(x,y) \simeq \f{x_d^{\sigma -\f{\alpha s}{2}}}{|x-y|^{d+\sigma -\f{\alpha s}{2}}}.
  \]
  Since $\f{s\alpha}{2}-\sigma<\f{1}{p}$ and $1+2\sigma -\alpha s/2>0$, we can choose $\beta$ such that 
  \begin{equation}\label{eq-beta}
    p(\alpha s/2 -\sigma)<\beta < \min\{p'(1+\sigma-\alpha s/2), p(1+\sigma)\}.
  \end{equation}
  It follows that 
  \begin{equation}\label{eq1-from beta}
    \sigma -\f{\alpha s}{2}+\beta/p>0, 
  \end{equation}
  \begin{equation}\label{eq2-from beta}
    \sigma -\f{\alpha s}{2}-\beta/p'>-1,
  \end{equation}
  and
  \begin{equation}\label{eq3-from beta}
    \sigma -\beta/p>-1.
  \end{equation}	
  Moreover, since $1+\sigma -\alpha s/2>0$ for $\sigma> \f{\alpha}{2}\big(1+\f{s}{2}\big)$, we can choose $\beta$ such that 
  \begin{equation}\label{eqs-beta}
    \max\{0,p(\alpha s/2 -\sigma)\}<\beta < \min\{p'(1+\sigma-\alpha s/2), p(1+\sigma)\}.
  \end{equation}
  as long as $\alpha<2$ and $\sigma> \f{\alpha}{2}\big(1+\f{s}{2}\big)$.
  (Note that this threshold for $\sigma$ also occurs in \cite[Theorem~11]{FrankMerz2023}.)
  We verify the $L^p$-boundedness using a Schur test with weight
  \[
    w(x,y)=\Big(\f{x_d}{|x-y|}\Big)^\beta,
  \]
  i.e., we show that
  \[
    \sup_{x\in\R_+^d}\int_{4x_d \le |x-y|\le 4y_d}w(x,y)^{1/p}K(x,y)dy\lesi 1
  \]	
  and
  \[
    \sup_{y\in\R_+^d}\int_{4x_d \le |x-y|\le 4y_d}w(x,y)^{-1/p'}K(x,y)dx\lesi 1. 
  \]	
  From \eqref{eq1-from beta}, we have
  \[
    \begin{aligned}
      \int_{4x_d \le |x-y|\le 4y_d}w(x,y)^{1/p}K(x,y)dy
      &\lesi \int_{4x_d \le |x-y| }\f{x_d^{\sigma -\f{\alpha s}{2}+\beta/p}}{|x-y|^{d+\sigma -\f{\alpha s}{2}+\beta/p}} dy \lesi 1.
    \end{aligned}
  \]
  
  For the second inequality, we have
  \[
    \begin{aligned}
      \int_{4x_d \le |x-y|\le 4y_d}w(x,y)^{-\beta/p'}K(x,y)dx&\lesi \int_{4x_d \le |x-y|\le 4y_d}\f{x_d^{\sigma -\f{\alpha s}{2}-\beta/p'}}{|x-y|^{d+\sigma -\f{\alpha s}{2}-\beta/p'}} dx.
    \end{aligned}
  \]
  In this region, we have $|x-y|\ge y_d -x_d\ge y_d -|x-y|/4$. Thus, $|x-y|\ge 4y_d/5$ and, by \eqref{eq2-from beta},
  \[
    \begin{aligned}
      \int_{4x_d \le |x-y|\le 4y_d}w(x,y)^{-\beta/p'}K(x,y)dx
      &\lesi \int_{4x_d \le |x-y|\le 4y_d}\f{x_d^{\sigma -\f{\alpha s}{2}-\beta/p'}}{y_d^{d+\sigma -\f{\alpha s}{2}-\beta/p'}} dx\\
      &\lesi \int_0^{ y_d}\int_{|x'-y'|\le 4y_d} \f{x_d^{\sigma -\f{\alpha s}{2}-\beta/p'}}{y_d^{d+\sigma -\f{\alpha s}{2}-\beta/p'}} dx'dx_d\\
      &\lesi \int_0^{y_d} \f{x_d^{\sigma -\f{\alpha s}{2}-\beta/p'}}{y_d^{1+\sigma -\f{\alpha s}{2}-\beta/p'}} dx_d \lesi 1.
    \end{aligned}
  \]

  \bigskip
  \noindent \textbf{Case 3: $4y_d\le |x-y|\le 4x_d$.} In this case,
  \[
    K(x,y) \simeq \f{x_d^{-\f{\alpha s}{2}}y_d^\sigma}{|x-y|^{d+\sigma -\f{\alpha s}{2}}}.
  \]
  Since $\f{1}{p}<1+\sigma$, we can choose $\gamma$ such that 
  \[
    \sigma p' <\gamma<p(1+\sigma),
  \]
  which implies 
  \begin{equation}
    \label{eq-gamma}
    \sigma -\gamma/p > -1 \ \  \text{and} \ \ \sigma +\gamma/p'>0.
  \end{equation}
  We will verify a Schur test with weight
  \[
    w(x,y)=\Big(\f{|x-y|}{y_d}\Big)^\gamma.
  \]
  We first show that 
  \[
    \int_{4y_d \le |x-y|\le 4x_d}w(x,y)^{1/p}K(x,y)dy\lesi 1. 
  \]	
  To do this, we write
  \[
    \begin{aligned}
      \int_{4y_d \le |x-y|\le 4x_d}w(x,y)^{1/p}K(x,y)dy
      &\simeq \int_{4y_d \le |x-y|\le 4x_d}\Big(\f{|x-y|}{y_d}\Big)^{\gamma/ p}\f{x_d^{-\f{\alpha s}{2}}y_d^\sigma}{|x-y|^{d+\sigma -\f{\alpha s}{2}}}dy\\
      &\simeq \int_{4y_d \le |x-y|\le 4x_d} \f{x_d^{-\f{\alpha s}{2}}y_d^{\sigma-\gamma/p}}{|x-y|^{d+\sigma -\gamma/p-\f{\alpha s}{2}}}dy\\
    \end{aligned}
  \]
  Similarly to \textbf{Case 2}, in this region, $|x-y|\ge x_d -y_d\ge x_d -|x-y|/4$, which implies that $|x-y|\ge 4x_d/5$. Hence, $|x-y|\simeq x_d$ in this region. Consequently,
  \[
    \begin{aligned}
      \int_{4y_d \le |x-y|\le 4x_d}w(x,y)^{1/p}K(x,y)dy
      &\lesi \int_0^{ x_d}\int_{|x'-y'|\le 4x_d} \f{x_d^{-\f{\alpha s}{2}}y_d^{\sigma-\gamma/p}}{x_d^{d+\sigma -\gamma/p-\f{\alpha s}{2}}} dy'dy_d\\
      &\lesi \int_0^{ x_d}  \f{x_d^{d-\f{\alpha s}{2}-1}y_d^{\sigma-\gamma/p}}{x_d^{d+\sigma -\gamma/p-\f{\alpha s}{2}}} dy_d\\
      &\lesi \int_0^{ x_d}  x_d^{\gamma/p-\sigma -1}y_d^{\sigma-\gamma/p}  dy_d
        \lesi 1,
    \end{aligned}
  \]
  where in the last inequality we used \eqref{eq-gamma}.

  It remains to show that 
  \[
    \int_{4y_d \le |x-y|\le 4x_d}w(x,y)^{-1/p'}K(x,y)dx\lesi 1. 
  \]	
  Since in this case $|x-y|\simeq x_d$, by \eqref{eq-gamma} we have
  \[
    \begin{aligned}
      \int_{4y_d \le |x-y|\le 4x_d}w(x,y)^{-1/p'}K(x,y)dx
      &\simeq \int_{  |x-y|\ge 4y_d}\Big(\f{|x-y|}{y_d}\Big)^{-\gamma/ p'}\f{y_d^\sigma}{|x-y|^{d+\sigma}}dx\\
      &\lesi \int_{  |x-y|\ge 4y_d} \f{ y_d^{\sigma+\gamma/p'}}{|x-y|^{d+\sigma+\gamma/p'}}dx
        \lesi 1.
    \end{aligned}
  \]

  \bigskip
  \noindent
  \textbf{Case 4: $4(x_d\vee y_d) \le |x-y|$.} In this case,
  \[
    \begin{aligned}
      K&(x,y)\simeq x_d^{-\alpha s/2}|x-y|^{\f{\alpha s}{2}-d}\Big(\f{x_d y_d}{|x-y|^2}\Big)^\sigma\\
       &\times \Big[\one_{\alpha=2}+\Big(\one_{\sigma\le \f{\alpha}{2}(1+\f{s}{2})}+\Big(\ln\f{|x-y|}{x_d\vee y_d}\Big){\one_{\sigma=\f{\alpha}{2}(1+\f{s}{2})}}+\Big(\f{|x-y|}{x_d\vee y_d}\Big)^{2\sigma-\alpha(1+\f{s}{2})}\one_{\sigma>\f{\alpha}{2}(1+\f{s}{2})} \Big)\one_{\alpha<2}\Big].
    \end{aligned}
  \]
  We will perform a Schur test with weight
  \[
    w(x,y)= \Big(\f{x_d}{y_d}\Big)^{\beta},
  \] 
  where $\beta $ is defined by \eqref{eq-beta}.

  Since $|x-y|\ge 4(x_d\vee y_d)$, $|x-y|\simeq |x'-y'|$. Then we have
  \[
    \begin{aligned}
      \int_{\Rd_+} &w(x,y)^{1/p}K(x,y)dy \\
                   &\lesi \sum_{j\ge 1} \int_{0}^{2^jx_d}\int_{2^j x_d<|x'-y'|\le 2^{j+1}x_d}w(x,y)^{1/p}K(x,y)dy'dy_d\\
                   &\lesi \sum_{j\ge 1}\int_{0}^{2^jx_d} x_d^{-\alpha s/2} (2^jx_d)^{\f{\alpha s}{2}-1}\Big(\f{x_d y_d}{(2^jx_d)^2}\Big)^\sigma\Big(\f{x_d}{y_d}\Big)^{\beta/p}\\
                   & \ \ \times \Big[\one_{\alpha=2}+\Big(\one_{\sigma\le \f{\alpha}{2}(1+\f{s}{2})}+\Big(\ln\f{2^jx_d}{x_d\vee y_d}\Big){\one_{\sigma=\f{\alpha}{2}(1+\f{s}{2})}}+\Big(\f{2^jx_d}{x_d\vee y_d}\Big)^{2\sigma-\alpha(1+\f{s}{2})}\one_{\sigma>\f{\alpha}{2}(1+\f{s}{2})} \Big)\one_{\alpha<2}\Big]dy_d. 
    \end{aligned}
  \]
  By a straightforward calculation, we get
  \[
    \begin{aligned}
      \int_{\Rd_+} &w(x,y)^{1/p}K(x,y)dy \\
                   &\lesi \sum_{j\ge 1}2^{j(\alpha s/2 -1 -2\sigma) }\int_{0}^{2^jx_d} x_d^{-1-\sigma +\beta/p} y_d^{\sigma -\beta/p} dy_d\\
                   & \ \ \times \Big[\one_{\alpha=2}+\Big(\one_{\sigma\le \f{\alpha}{2}(1+\f{s}{2})}+\ln(2^j){\one_{\sigma=\f{\alpha}{2}(1+\f{s}{2})}}+2^{j(2\sigma-\alpha(1+\f{s}{2}))}\one_{\sigma>\f{\alpha}{2}(1+\f{s}{2})} \Big)\one_{\alpha<2}\Big]. 
    \end{aligned}
  \]
  From \eqref{eq3-from beta}, 
  \[
    \begin{aligned}
      \int_{\Rd_+}& w(x,y)^{1/p}K(x,y)dy \\
                  &\lesi \sum_{j\ge 1}2^{j(\alpha s/2 -1 -2\sigma)} 2^{j(1+\sigma -\beta/p)}\\
                  & \ \ \times \Big[\one_{\alpha=2}+\Big(\one_{\sigma\le \f{\alpha}{2}(1+\f{s}{2})}+\ln(2^j){\one_{\sigma=\f{\alpha}{2}(1+\f{s}{2})}}+2^{j(2\sigma-\alpha(1+\f{s}{2}))}\one_{\sigma>\f{\alpha}{2}(1+\f{s}{2})} \Big)\one_{\alpha<2}\Big]\\
                  &\lesi \sum_{j\ge 1}2^{j(\alpha s/2  -\sigma -\beta/p)}\\
                  & \ \ \times \Big[\one_{\alpha=2}+\Big(\one_{\sigma\le \f{\alpha}{2}(1+\f{s}{2})}+\ln(2^j){\one_{\sigma=\f{\alpha}{2}(1+\f{s}{2})}}+2^{j(2\sigma-\alpha(1+\f{s}{2}))}\one_{\sigma>\f{\alpha}{2}(1+\f{s}{2})} \Big)\one_{\alpha<2}\Big].
    \end{aligned}
  \]
  This, along with \eqref{eq1-from beta}, implies 
  \[
    \begin{aligned}
      \int_{\Rd_+}& w(x,y)^{1/p}K(x,y)dy \\
                  & \lesi 1 + \sum_{j\ge 1}2^{j(\alpha s/2  -\sigma -\beta/p)}2^{j(2\sigma-\alpha(1+\f{s}{2}))}\one_{\sigma>\f{\alpha}{2}(1+\f{s}{2})}  \one_{\alpha<2}\\
                  & \lesi 1 + \sum_{j\ge 1}2^{j( \sigma-\alpha -\beta/p)}\one_{\sigma>\f{\alpha}{2}(1+\f{s}{2})}  \one_{\alpha<2} \lesi 1,
    \end{aligned}
  \]
  where in the last inequality we used \eqref{eqs-beta} and the fact $\sigma< \alpha$.
  
  For the second inequality, similarly to the first inequality
  \[
    \begin{aligned}
      \int_{\Rd_+} &w(x,y)^{-1/p'}K(x,y)dx \\
                   &\lesi \sum_{j\ge 1} \int_{0}^{2^jy_d}\int_{2^j y_d<|x'-y'|\le 2^{j+1}y_d}w(x,y)^{-1/p'}K(x,y)dx'dx_d\\
                   &\lesi \sum_{j\ge 1}\int_{0}^{2^jy_d} x_d^{-\alpha s/2} (2^jy_d)^{\f{\alpha s}{2}-1}\Big(\f{x_d y_d}{(2^jy_d)^2}\Big)^\sigma\Big(\f{x_d}{y_d}\Big)^{-\beta/p'} \\
                   & \ \ \times \Big[\one_{\alpha=2}+\Big(\one_{\sigma\le \f{\alpha}{2}(1+\f{s}{2})}+\Big(\ln\f{2^jy_d}{x_d\vee y_d}\Big){\one_{\sigma=\f{\alpha}{2}(1+\f{s}{2})}}+\Big(\f{2^jy_d}{x_d\vee y_d}\Big)^{2\sigma-\alpha(1+\f{s}{2})}\one_{\sigma>\f{\alpha}{2}(1+\f{s}{2})} \Big)\one_{\alpha<2}\Big]dx_d. 
    \end{aligned}
  \]
  By a simple calculation, we get
  \[
    \begin{aligned}
      \int_{\Rd_+} &w(x,y)^{1/p}K(x,y)dx \\
                   &\lesi \sum_{j\ge 1}2^{j(\alpha s/2 -1 -2\sigma )}\int_{0}^{2^jy_d} x_d^{-\alpha s/2+\sigma -\beta/p'} y_d^{\alpha s/2 -1 -\sigma +\beta/p'} dx_d\\
                   & \ \ \times \Big[\one_{\alpha=2}+\Big(\one_{\sigma\le \f{\alpha}{2}(1+\f{s}{2})}+\ln(2^j){\one_{\sigma=\f{\alpha}{2}(1+\f{s}{2})}}+2^{j(2\sigma-\alpha(1+\f{s}{2}))}\one_{\sigma>\f{\alpha}{2}(1+\f{s}{2})} \Big)\one_{\alpha<2}\Big]. 
    \end{aligned}
  \]
  
  This, along with \eqref{eq2-from beta}, yields
  \[
    \begin{aligned}
      \int_{\Rd_+}& w(x,y)^{1/p}K(x,y)dx \\
                  &\lesi \sum_{j\ge 1}2^{j(\alpha s/2 -1 -2\sigma +(\gamma-\beta)/p)} 2^{j(1+\sigma -\gamma/p)}\\
                  & \ \ \times \Big[\one_{\alpha=2}+\Big(\one_{\sigma\le \f{\alpha}{2}(1+\f{s}{2})}+\ln(2^j){\one_{\sigma=\f{\alpha}{2}(1+\f{s}{2})}}+2^{j(2\sigma-\alpha(1+\f{s}{2}))}\one_{\sigma>\f{\alpha}{2}(1+\f{s}{2})} \Big)\one_{\alpha<2}\Big]\\
                  &\lesi \sum_{j\ge 1}2^{j(\alpha s/2  -\sigma -\beta/p)}\\
                  & \ \ \times \Big[\one_{\alpha=2}+\Big(\one_{\sigma\le \f{\alpha}{2}(1+\f{s}{2})}+\ln(2^j){\one_{\sigma=\f{\alpha}{2}(1+\f{s}{2})}}+2^{j(2\sigma-\alpha(1+\f{s}{2}))}\one_{\sigma>\f{\alpha}{2}(1+\f{s}{2})} \Big)\one_{\alpha<2}\Big].
    \end{aligned}
  \]
  At this stage, arguing similarly to the first inequality we also come up with
  \[
    \int_{\Rd_+} w(x,y)^{1/p}K(x,y)dx\lesi 1.
  \]
  This completes the proof of \eqref{eq:thm-HardyIneqPre} under the assumption $(\f{\alpha s}{2}-\sigma)_+<\f{1}{p}<1+\sigma\wedge0$.
\end{proof}

\section{Proof of density of $(L_\lambda)^{s/2}C_c^\infty(\R^d_+)$ in $L^p(\R_+^d)$ (Theorem~\ref{density})}
\label{s:density}

The strategy to prove Theorem~\ref{density} is the same as in \cite[Theorem~25]{FrankMerz2023}.
We first prove Theorem \ref{density} for $f$ of a special form, namely, $f\in L_\lambda^{s/2} \me{-tL_\lambda} C_c^\infty(\R^d_+)$ for some $0<t<\infty$. To do so, we use the following pointwise bounds on functions in $\me{-tL_\lambda} C_c^\infty(\R^d_+)$. For a function $u$ on a set $\Omega$ and $0<\beta\le 2$, we define its $\beta$-th H\"older seminorm as
\begin{equation}
  \label{eq:holder}
  [u]_{C^\beta(\Omega)} :=
  \begin{cases}
    \sup_{x,y\in\Omega} \frac{|u(x)-u(y)|}{|x-y|^\beta} & \text{if}\ 0<\beta\le 1 \,, \\
    \sup_{x,y\in\Omega} \frac{|\nabla u(x)-\nabla u(y)|}{|x-y|^{\beta-1}} & \text{if}\ 1<\beta\le 2 \,.
  \end{cases}
\end{equation}

\begin{lem}[{\cite[Lemma~27]{FrankMerz2023}}]
  \label{pointwise}
  Let $\alpha$, $\lambda$ and $\sigma$ be as in Theorem \ref{density}. Let $0<t<\infty$ and $\psi\in \me{-tL_\lambda} C_c^\infty(\R^d_+)$. Then, for all $x\in\R^d_+$,
  \begin{align}
    \label{eq:heatbound1}
    |\psi(x)| & \lesssim (1\wedge x_d)^\sigma(1\wedge |x|^{-d-\alpha}) \,, \\
    \label{eq:heatbound2}
    |L_\lambda\psi(x)| & \lesssim (1\wedge x_d)^\sigma (1\wedge |x|^{-d-\alpha}) \,, \\
    \label{eq:heatbound3}
    |(-\Delta)^{\alpha/2} \psi(x)| & \lesssim (1\wedge x_d)^{\sigma-\alpha} (1\wedge |x|^{-d-\alpha}) \,, \\
    \label{eq:heatbound4}
    [\psi]_{C^\beta(B_{\ell_x}(x))} & \lesssim (1\wedge x_d)^{\sigma-\beta}  (1\wedge |x|^{-d-\alpha})
                                      \quad \text{with}\ \ell_x:=1\wedge\tfrac{x_d}{2} \,,\ 0<\beta<\alpha.
  \end{align} 
\end{lem}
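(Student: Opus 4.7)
The plan is to write $\psi(x) = \int_{\R^d_+} p_t(x,y)\phi(y)\,dy$ with $\phi \in C_c^\infty(\R^d_+)$ and derive each bound by combining the pointwise heat kernel estimates of Theorem \ref{thm-heatkernelLa- alpha < 2} (or Theorem \ref{thm-heatkernelLa- alpha = 2}) with some additional identities relating $L_\lambda$, $(-\Delta)^{\alpha/2}$, and derivatives of $\psi$. Throughout, $t>0$ is fixed (so all $t$-dependent factors can be absorbed into the constants) and $\supp\phi$ is a compact subset of $\R^d_+$, in particular bounded away from the boundary.

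For \eqref{eq:heatbound1}, the plan is to insert the heat kernel bound directly: the factor $(1\wedge x_d/t^{1/\alpha})^\sigma \simeq_t (1\wedge x_d)^\sigma$ produces the correct vanishing at the boundary, while the polynomial (resp.\ Gaussian) factor $((t^{1/\alpha}+|x-y|)/t^{1/\alpha})^{-d-\alpha}$ decays like $|x|^{-d-\alpha}$ for $|x|$ much larger than the diameter of $\supp\phi$. The integrand then has an $L^1_y$ norm of the required order. For \eqref{eq:heatbound2}, I write $L_\lambda \psi = t^{-1} (tL_\lambda)\me{-tL_\lambda}\phi$ and apply Proposition \ref{thm-ptk} with $k=1$, which gives a kernel $p_{t,1}(x,y)$ with the same spatial structure as $p_t(x,y)$; the same argument then yields the claim.

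For \eqref{eq:heatbound3}, I intend to exploit the relation between the regional fractional Laplacian and the full fractional Laplacian. Using, for instance, the identity $L_\lambda f = (-\Delta)^{\alpha/2}\widetilde f\big|_{\R^d_+} + (\lambda-\lambda_0) x_d^{-\alpha} f$ (with $\widetilde f$ the zero extension, $\lambda_0 = C(\alpha/2)$, cf.\ \cite{Bogdanetal2003}), the desired bound reduces to combining \eqref{eq:heatbound1} and \eqref{eq:heatbound2}: the extra weight $x_d^{-\alpha}$ turns the factor $(1\wedge x_d)^\sigma$ into $(1\wedge x_d)^{\sigma-\alpha}$ when $x_d\le 1$, and is harmless for $x_d\ge 1$.

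The main obstacle is \eqref{eq:heatbound4}, because H\"older seminorms require controlling oscillation rather than pointwise values. On the ball $B_{\ell_x}(x)$ with $\ell_x = 1\wedge x_d/2$, each point is at distance $\simeq x_d$ from $\partial\R^d_+$, so the interior regularity of the heat kernel can be used. The plan is either to differentiate the representation of $\psi$ in $x$ under the integral, using that $\partial_x^k p_t(x,y)$ picks up factors of $x_d^{-1}$ from the boundary weight $(1\wedge x_d/t^{1/\alpha})^\sigma$ and of $(t^{1/\alpha}+|x-y|)^{-1}$ from the tail, or to interpolate the $L^\infty$ bound \eqref{eq:heatbound1} with an $L^\infty$ bound on $(-\Delta)^{\alpha/2}\psi$ via \eqref{eq:heatbound3}. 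Either way one recovers the scaling $(1\wedge x_d)^{\sigma-\beta}$ and decay $(1\wedge |x|^{-d-\alpha})$ for any $\beta\in(0,\alpha)$. Since all inputs used here only require the upper heat kernel bound, the proof simultaneously covers the setting of Remark \ref{densityrem}.
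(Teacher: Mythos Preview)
The paper does not supply a proof of this lemma: it is quoted verbatim from \cite[Lemma~27]{FrankMerz2023} and used as a black box in Section~\ref{s:density}. There is therefore no in-paper proof to compare your proposal against.

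That said, your outline for \eqref{eq:heatbound1}--\eqref{eq:heatbound3} is the natural one and matches the structure one expects in \cite{FrankMerz2023}: the heat kernel bound (Theorem~\ref{thm-heatkernelLa- alpha < 2}) gives \eqref{eq:heatbound1} directly, Proposition~\ref{thm-ptk} with $k=1$ gives \eqref{eq:heatbound2}, and the identity $L_\lambda f = ((-\Delta)^{\alpha/2}\widetilde f)|_{\R^d_+} + (\lambda-\lambda_0)x_d^{-\alpha}f$ from \cite{Bogdanetal2003} (also used in Lemma~\ref{domain}) reduces \eqref{eq:heatbound3} to the previous two bounds. Your treatment of \eqref{eq:heatbound4} is where the proposal remains a genuine sketch: the ``interpolation'' route between $L^\infty$ bounds on $\psi$ and $(-\Delta)^{\alpha/2}\psi$ does not immediately yield a local $C^\beta$ seminorm without invoking an embedding or regularity statement that you do not name, and the ``differentiate under the integral'' route requires spatial derivative bounds on $p_t(x,y)$, which are not among the kernel estimates developed in this paper (Proposition~\ref{thm-ptk} controls powers of $L_\lambda$, not $\nabla_x$). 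Either route can be made to work, but each needs an additional ingredient that you have not yet pinned down.
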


As noted in \cite{FrankMerz2023} the decay in the bounds for $\alpha=2$ can be vastly improved, but it is convenient for us to have a unified statement.
To give the proof of Theorem~\ref{density}, we use, for two parameters $0<r\le 1\le R<\infty$, the commutator bounds in Corollary~\ref{cutoffcomb} below. These bounds involve functions satisfying
\begin{equation}
  \label{eq:asschi1}
  0\le\chi\le 1 \,,
  \qquad
  \chi(x) = 1 \ \text{if}\ |x|\le R \,,
  \qquad
  \chi(x) = 0 \ \text{if}\ |x|\ge 2R \,,
  \qquad
  |\nabla\chi|\lesssim R^{-1} \,,
\end{equation}
and
\begin{equation}
  \label{eq:asstheta1}
  0\le\theta\le 1 \,,
  \qquad
  \theta(x) = 0 \ \text{if}\ x_d\le r \,,
  \qquad
  \theta(x) = 1 \ \text{if}\ x_d\ge 2r \,,
  \qquad
  |\nabla\theta|\lesssim r^{-1} \,,
\end{equation}
and, if $\alpha\ge 1$, also the following bounds on their Hessians, namely
\begin{equation}
  \label{eq:asschi2}
  |D^2\chi|\lesssim R^{-2},
\end{equation}
and
\begin{equation}
  \label{eq:asstheta2}
  |D^2\theta|\lesssim r^{-2}.
\end{equation}

\begin{cor}
  \label{cutoffcomb}
  Let $p\in(1,\vc)$ and $0<\alpha<2$. Let $0<r\le 1\le R<\infty$, assume that $\chi$ and $\theta$ satisfy \eqref{eq:asschi1} and \eqref{eq:asstheta1} and, if $\alpha\ge 1$, also \eqref{eq:asschi2} and \eqref{eq:asstheta2}. Let $\frac{\alpha-1}{2}\le \sigma<\alpha$, assume that $v$ satisfies
  \begin{equation}
    \label{eq:assv1}
    |v(x)|\le (1\wedge |x|^{-d-\alpha}) (1\wedge x_d)^\sigma
    \qquad\text{for all}\ x\in\R^d_+ \, 
  \end{equation}
  and, if $\alpha\ge 1$, also
  \begin{equation}
    \label{eq:assv2}
    [v]_{C^\beta(B_{\ell_x}(x))} \le (1\wedge |x|^{-d-\alpha}) \, (1\wedge x_d)^{\sigma-\beta}
    \qquad\text{for all}\ x\in\R^d_+\ \text{with}\ \ell_x := 1\wedge \tfrac{x_d}2
  \end{equation}
  with some $\beta>\alpha-1$. Then
  $$
  \| [(-\Delta)^{\alpha/2},\chi\theta] v \|_{L^p(\R^d_+)} \lesssim r^{\sigma-\alpha+1/p} + R^{-\alpha-d/p'} \,.
  $$
\end{cor}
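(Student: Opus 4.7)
The plan is to split the double-cutoff commutator via the Leibniz identity
\[
[(-\Delta)^{\alpha/2},\chi\theta] = [(-\Delta)^{\alpha/2},\chi]\,\theta + \chi\,[(-\Delta)^{\alpha/2},\theta],
\]
reducing matters to two single-cutoff commutators. The first summand will produce the far-field contribution $R^{-\alpha-d/p'}$, since $1-\chi$ is supported on $\{|x|\ge R\}$, where $|v(x)|\lesi |x|^{-d-\alpha}$ by \eqref{eq:assv1}; the second summand will produce the boundary contribution $r^{\sigma-\alpha+1/p}$, since $1-\theta$ is supported on the slab $\{x_d\le 2r\}$, where $|v(x)|\lesi x_d^\sigma$.

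For each single-cutoff commutator $[(-\Delta)^{\alpha/2},f]g$ (with $f\in\{\chi,\theta\}$), I would use the bilinear representation
\[
[(-\Delta)^{\alpha/2},f]g(x) = g(x)\,(-\Delta)^{\alpha/2}f(x) - c_{d,\alpha}\,\mathrm{p.v.}\!\int_{\R^d}\frac{(f(x)-f(y))(g(x)-g(y))}{|x-y|^{d+\alpha}}\,dy,
\]
obtained from the elementary identity $f(x)g(x)-f(y)g(y) = f(x)(g(x)-g(y)) + g(x)(f(x)-f(y)) - (f(x)-f(y))(g(x)-g(y))$. I would then split the $y$-integration at the scale $|x-y|=\ell_x$. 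On the local piece, I would Taylor-expand $f$ to first order and exploit the radial symmetry of the kernel $|x-y|^{-d-\alpha}$ to kill the linear term, producing a remainder of size $\|D^2 f\|_\infty\cdot[v]_{C^\beta(B_{\ell_x}(x))}\cdot\ell_x^{2+\beta-\alpha}$, finite precisely because $\beta>\alpha-1$. On the nonlocal piece, I would apply the pointwise bounds \eqref{eq:assv1} on $g$ together with $|f(x)-f(y)|\lesi \|\nabla f\|_\infty\cdot(|x-y|\wedge 1)$.

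For the $\chi$-piece, combining $|(-\Delta)^{\alpha/2}\chi(x)|\lesi R^{-\alpha}$ (by the scaling $\chi(\cdot/R)$) with the decay $|v(x)|\lesi |x|^{-d-\alpha}$ on the effective support $\{|x|\gtrsim R\}$ yields an $L^p$-integrable density of order $R^{-\alpha}|x|^{-d-\alpha}$, whose norm computes to $R^{-\alpha-d/p'}$. For the $\theta$-piece, a rescaling $x_d\mapsto r\tilde x_d$ reduces the estimate to a model slab of unit thickness; the boundary weights $|v|\lesi x_d^\sigma$ and $[v]_{C^\beta}\lesi x_d^{\sigma-\beta}$ from \eqref{eq:assv1}--\eqref{eq:assv2} then produce a singular factor of order $r^{\sigma-\alpha}$, and multiplying by the $L^p$-volume $r^{1/p}$ of the slab in the $x_d$-direction yields $r^{\sigma-\alpha+1/p}$.

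The main obstacle is the case $\alpha\ge1$, where the bilinear integral no longer converges absolutely using only the Lipschitz bound on $f$, and where the pointwise value $(-\Delta)^{\alpha/2}f(x)$ requires a principal-value interpretation for which a Lipschitz $f$ is not enough. Both issues are resolved using a second-order Taylor expansion of $f$; this is exactly why the Hessian hypotheses \eqref{eq:asschi2}--\eqref{eq:asstheta2} are imposed precisely in this regime. A secondary technical point is the careful bookkeeping of the thresholds $(\alpha-1)/2\le\sigma<\alpha$, which ensure that the singular weight $x_d^{\sigma-\alpha}$ integrated against the appropriate powers of $r$ yields the sharp exponent $\sigma-\alpha+1/p$ and does not interfere with the integrability at the boundary of the slab.
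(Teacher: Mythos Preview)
Your high-level decomposition is essentially the paper's: both split $[(-\Delta)^{\alpha/2},\chi\theta]v$ into a $\chi$-commutator and a $\theta$-commutator (the paper uses the variant $\theta[(-\Delta)^{\alpha/2},\chi]v+[(-\Delta)^{\alpha/2},\theta](\chi v)$, but either Leibniz ordering works). The paper then does not introduce your bilinear ``carr\'e du champ'' representation at all; it works directly with the single integral
\[
[(-\Delta)^{\alpha/2},f]g(x)=c_{d,\alpha}\int_{\R^d}\frac{(f(x)-f(y))\,g(y)}{|x-y|^{d+\alpha}}\,dy
\]
and simply quotes the pointwise bounds for these two integrals already proved in \cite[Lemmas~20--21 and Corollary~23]{FrankMerz2023}, after which the $L^p$-norms are computed in one line each.

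Your bilinear route, as sketched, has a genuine gap in the $\chi$-piece. You estimate the term $(\theta v)(x)\,(-\Delta)^{\alpha/2}\chi(x)$ by saying it lives ``on the effective support $\{|x|\gtrsim R\}$''. It does not: $(-\Delta)^{\alpha/2}\chi$ is a nonlocal object that is $\simeq R^{-\alpha}$ uniformly on $\{|x|\lesssim R\}$, while $|\theta v|$ can be of order one on $\{|x|\le1\}$. Hence that single term already contributes $\gtrsim R^{-\alpha}$ to the $L^p$-norm over the unit ball, which is strictly larger than the claimed $R^{-\alpha-d/p'}$. The sharp decay only appears \emph{after} the cancellation between your two bilinear pieces---for $|x|\le R/2$, where $\chi(x)=1$, one checks that
\[
(\theta v)(x)(-\Delta)^{\alpha/2}\chi(x)-c_{d,\alpha}\int\frac{(\chi(x)-\chi(y))((\theta v)(x)-(\theta v)(y))}{|x-y|^{d+\alpha}}\,dy
=c_{d,\alpha}\int\frac{(1-\chi(y))(\theta v)(y)}{|x-y|^{d+\alpha}}\,dy,
\]
and it is this recombined integral (supported in $|y|\ge R$, hence with $|x-y|\gtrsim R$) that gives the $R^{-d-2\alpha}$ pointwise bound and ultimately $R^{-\alpha-d/p'}$. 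So either abandon the bilinear splitting and estimate the single-integral form of the commutator directly (as the paper does, citing \cite{FrankMerz2023}), or keep your representation but explicitly re-sum the two pieces on $\{|x|\le R/2\}$ before taking norms. Your treatment of the $\theta$-piece and the scaling heuristics for the exponent $\sigma-\alpha+1/p$ are fine.
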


\begin{proof}
  The proof is exactly as in \cite[Corollary~23]{FrankMerz2023}. For an explicit constant $c_{d,\alpha}>0$, we estimate the $L^p$-norm of the right-hand side of
  \begin{align}
    \label{eq:cutoffcombaux1}
    c_{d,\alpha}[(-\Delta)^{\alpha/2},\chi\theta] v(x) = \theta(x) \int_{\R^d}  \frac{\chi(x)-\chi(y)}{|x-y|^{d+\alpha}} v(y)\,dy + \int_{\R^d} \frac{\theta(x)-\theta(y)}{|x-y|^{d+\alpha}} \chi(y)v(y)\,dy.
  \end{align}
  For the first summand on the right-hand side, we estimate $|\theta(x)|\leq1$ and use the pointwise bounds in \cite[Lemmas~20]{FrankMerz2023}, i.e.,
  \begin{align}
    \begin{split}
      & \left|\int_{\R^d} \frac{\chi(x)-\chi(y)}{|x-y|^{d+\alpha}}v(y)\,dy\right|
      \lesssim \one_{|x|\leq R} R^{-d-2\alpha} + \one_{|x|>R} |x|^{-d-\alpha} \\
      & \quad + \one_{\alpha\geq1}\one_{|x|\simeq R}
        R^{-d-\alpha-1} \left( (1\wedge x_d)^{-(\sigma-\alpha+1)_-} + \one_{\sigma=\alpha-1}\ln\tfrac1{1\wedge x_d} + \one_{\alpha=1} \ln R \right) \\
      & \quad \text{for all}\ x\in\R^d_+
    \end{split}
  \end{align}
  and all $v$ satisfying the assumptions of the present corollary. The $L^p$-norm of this right-hand side is $\mathcal{O}(R^{-\alpha-d/p'})$ for $R>1$. For the second summand on the right-hand side of \eqref{eq:cutoffcombaux1}, we use \cite[Lemmas~21]{FrankMerz2023}, i.e.,
  \begin{align}
    \label{eq:cutoffcombaux2}
    \begin{split}
      \left|\int_{\R^d} \frac{\theta(x)-\theta(y)}{|x-y|^{d+\alpha}}(\chi v)(y)\,dy\right|
      \lesssim \left(r^{\sigma-\alpha}\wedge \frac{r^{\sigma+1}}{x_d^{1+\alpha}}\right) (1+x_d)^{1+\alpha} (1\wedge |x|^{-d-\alpha})
      \quad \text{for all}\ x\in\R_+^d
    \end{split}
  \end{align}
  whenever $\chi v$ satisfies the assumptions of the present corollary. By \cite[Corollary~23]{FrankMerz2023} the function $\chi v$ indeed satisfies these assumptions when $v$ does. Since the $L^p$-norm of the right-hand side of \eqref{eq:cutoffcombaux2} is $\mathcal{O}(r^{\sigma-\alpha+1/p})$ for $r<1$, the proof is concluded.
\end{proof}

\begin{proof}[Proof of Theorem \ref{density}]
  \emph{Step 1.} We first prove this theorem for $f\in (L_\lambda)^{s/2}\me{-tL_\lambda} C_c^\infty(\R^d_+)$ for some $0<t<\infty$. Let $0<t<\infty$ and let $\psi\in \me{-tL_\lambda} C_c^\infty(\R^d)$. For parameters $0<r\le 1\le R<\infty$ to be determined, let $\chi$ and $\theta$ be as in Corollary~\ref{cutoffcomb} and
  $$
  \phi := \chi\theta \psi.
  $$
  Then, by \eqref{eq:heatbound1},
  $$
  \|\phi - \psi\|_{L^p(\R^d_+)} \le \| \one_{x_d\le 2r} \psi\|_{L^p(\R^d_+)} + \| \one_{|x|> R} \psi\|_{L^p(\R^d_+)} \lesssim r^{\sigma+1/p} + R^{-\alpha -d/p'} \,.
  $$
  Moreover,
  $$
  \|L_\lambda(\phi - \psi)\|_{L^p(\R^d_+)} \le \|(1-\chi\theta)L_\lambda\psi\|_{L^p(\R^d_+)} + \| [(-\Delta)^{\alpha/2},\chi\theta] \psi \|_{L^p(\R^d_+)} 
  $$
  and, by \eqref{eq:heatbound2},
  $$
  \|(1-\chi\theta)L_\lambda\psi\|_{L^p(\R^d_+)} \lesssim r^{\sigma+1/p} + R^{-\alpha -d/p'} \,.
  $$
  For $\alpha<2$ we apply Corollary \ref{cutoffcomb} and find
  $$
  \| [(-\Delta)^{\alpha/2},\chi\theta] \psi \|_{L^p(\R^d_+)}  \lesssim r^{\sigma-\alpha+1/p} + R^{-\alpha-d/p'} \,.
  $$
  The same bound holds for $\alpha=2$ as well, as follows by writing
  $$
  [-\Delta,\chi\theta] \psi = -2\nabla(\chi\theta)\cdot\nabla\psi - \Delta(\chi\theta)\psi
  $$
  and using the pointwise bounds \eqref{eq:heatbound1} and \eqref{eq:heatbound4}. Thus, for all $\alpha\le 2$,
  $$
  \|L_\lambda(\phi - \psi)\|_{L^p(\R^d_+)} \lesssim r^{\sigma-\alpha+1/p} + R^{-\alpha-d/p'} \,.
  $$
  Since $0< s\le 2$ we have, by interpolation,
  $$
  \|L_\lambda^{s/2}(\phi - \psi)\|_{L^p(\R^d_+)} \le \|\phi - \psi\|_{L^p(\R^d_+)}^{1-s/2} \|L_\lambda(\phi - \psi)\|_{L^p(\R^d_+)}^{s/2} \,.
  $$
  Inserting the above bounds, we get
  $$
  \|L_\lambda^{s/2}(\phi - \psi)\|_{L^p(\R^d_+)} \lesssim r^{\sigma+1/p-\alpha s/2} + R^{-\alpha-d/p'}.
  $$
  Since, by assumption $s<2(1/p+\sigma)/\alpha$, this tends to zero as $r\to 0$ and $R\to\infty$.
  
  \medskip
  
  \emph{Step 2.} We now prove Theorem \ref{density} in the general case.
  Let $f\in L^p(\R^d_+)$ and $\epsilon>0$. By Lemma \ref{Calderon-reproducing formula}, there exist  $t:=t_1/2$ and $T:=t_2/2$ such that 
  \begin{equation}
    \label{eq1-density proof}
    \| (\me{-2tL_\lambda} - \me{-2TL_\lambda})f - f \|_{L^p(\R^d_+)} \le \epsilon \,.
  \end{equation}
  On the other hand,
  $$
  \me{-2tL_\lambda}-\me{-2TL_\lambda}f = L_\lambda^{s/2}(\me{-tL_\lambda}+\me{-TL_\lambda}) L_\lambda^{-s/2}(\me{-tL_\lambda}-\me{-TL_\lambda})f.
  $$
  By Lemma \ref{lem-boundedness of LsetL},
  $$
  \|L_\lambda^{-s/2}(\me{-tL_\lambda}-\me{-TL_\lambda})f\|\lesi (T^{s/2}-t^{s/2})\|f\|_p.
  $$
  Since $C_c^\infty(\R^d_+)$ is dense in $L^p(\R^d_+)$, there is a $k\in C^\infty_c(\R^d_+)$ such that
  \begin{equation}
    \label{eq2-density proof}
    \| k - L_\lambda^{-s/2}(\me{-tL_\lambda}-\me{-TL_\lambda}) f \|_{L^p(\R^d_+)} \le \f{\epsilon}{\|L_\lambda^{s/2}(\me{-tL_\lambda}+\me{-TL_\lambda})\|_{p\to p}}.
  \end{equation}
  We define $\psi:= L_\lambda^{s/2}(\me{-tL_\lambda}+\me{-TL_\lambda})k$. Then, according to Step 1 (applied both to $L_\lambda^{s/2} \me{-tL_\lambda}k$ and to $L_\lambda^{s/2} \me{-TL_\lambda}k$) there is a $\phi\in C_c^\infty$ such that
  \begin{equation}\label{eq3-density proof}
    \| L_\lambda^{s/2} \phi - \psi \|_{L^p(\R^d_+)} \le \epsilon \,.
  \end{equation}
  Therefore,
  \[
  \begin{aligned}
    \|L_\lambda^{s/2} \phi -f\|_p
    &\le \|L_\lambda^{s/2} \phi -\psi\|_p +\|\psi-L_\lambda^{-s/2}(\me{-tL_\lambda}+\me{-TL_\lambda})L_\lambda^{-s/2}(\me{-tL_\lambda}-\me{-TL_\lambda})\me{-TL_\lambda}) f \|_p\\
    & \ \ + \|L_\lambda^{-s/2}(\me{-tL_\lambda}-\me{-TL_\lambda})L_\lambda^{-s/2}(\me{-tL_\lambda}-\me{-TL_\lambda}) f -f\|_p.
  \end{aligned}
  \]
  By \eqref{eq3-density proof},
  \[
    \|L_\lambda^{s/2} \phi -\psi\|_p\le \epsilon.
  \]
  Using \eqref{eq2-density proof} and Lemma \ref{lem-boundedness of LsetL},
  \[
    \begin{aligned}
      & \|\psi-L_\lambda^{-s/2}(\me{-tL_\lambda}-L_\lambda^{-s/2}(\me{-tL_\lambda}+\me{-TL_\lambda})\me{-TL_\lambda})\me{-TL_\lambda}) f \|_p\\
      & \quad = \|L_\lambda^{s/2}(\me{-tL_\lambda}+\me{-TL_\lambda})k-L_\lambda^{-s/2}(\me{-tL_\lambda}-L_\lambda^{-s/2}(\me{-tL_\lambda}-\me{-TL_\lambda})\me{-TL_\lambda}) f \|_p\\
      & \quad \le \|L_\lambda^{s/2}(\me{-tL_\lambda}+\me{-TL_\lambda})\|_{p\to p}\| k - L_\lambda^{-s/2}(\me{-tL_\lambda}-\me{-TL_\lambda}) f \|_{L^p(\R^d_+)}
        \le \epsilon.
    \end{aligned}
  \]
  Finally, by \eqref{eq1-density proof},
  \[
    \begin{aligned}
      \|L_\lambda^{-s/2}(\me{-tL_\lambda}-\me{-TL_\lambda})L_\lambda^{-s/2}(\me{-tL_\lambda}-\me{-TL_\lambda}) f -f\|_p
      &=\|(\me{-2tL_\lambda} - \me{-2TL_\lambda})f - f\|_p
        \le \epsilon.
    \end{aligned}
  \]
  Consequently, for any $\epsilon>0$ there is a $\phi\in C_c^\infty$ such that
  \[
  \|L_\lambda^{-s/2}\phi -f\|_p\le 3\epsilon.
  \]  
  This concludes the proof.
\end{proof}



\newcommand{\etalchar}[1]{$^{#1}$}
\def\cprime{$'$}

\end{document}